\long\def\red#1{{\color{red}#1}}
\newcommand{\R}{\mathbb{R}}
\newcommand{\N}{\mathbb{N}}
\let\e\varepsilon
\DeclareMathOperator\supp{supp}
\DeclareMathOperator\diag{diag}
\DeclareMathOperator\sign{sign}
\DeclareMathOperator\range{range}
\DeclareMathOperator\Span{span}
\DeclareMathOperator\rank{rank}
\DeclareMathOperator\rowspace{rowspace}
\newcommand{\ignore}[1]{}
\newtheorem{theorem}{Theorem}[section]
\newtheorem{lemma}[theorem]{Lemma}
\theoremstyle{definition}
\newtheorem{definition}[theorem]{Definition}
\newtheorem{remark}[theorem]{Remark}
\def\sS{\mathcal S}
\def\sR{\mathcal R}
\def\sN{\mathcal N}
\def\sK{\mathcal K}
\def\sm#1{\mathsf{#1}}
\DeclareMathOperator\maxS{maxS}
\DeclareMathOperator\maxP{maxP}
\DeclareMathOperator\maxInvP{maxInvP}
\definecolor{MyGray}{rgb}{0.95,0.95,0.95}
\definecolor{shadecolor}{rgb}{0.95,0.95,0.95}
\newdimen\grayboxwidth
\newdimen\grayboxmargin
\newdimen\grayboxinset
\newcounter{example}
\newenvironment{example}%
  {
  \begin{MakeFramed}{\FrameSep1cm\advance\hsize-\width \FrameRestore}}%
  {\end{MakeFramed}}
\draw\expandafter[\CF@arrow@current@style,-CF@full](\CF@arrow@start@node)--(\CF@arrow@end@node);% 
\def\curvedarrow@style{shorten <=\CF@arrow@offset,shorten >=\CF@arrow@offset,}% 
\draw\expandafter[\curvedarrow@style,-CF@full](\CF@arrow@start@name)..controls#1..(\CF@arrow@end@name);
\def\curvedarrow@style{shorten <=\CF@arrow@offset,shorten >=\CF@arrow@offset,}% 
\draw\expandafter[\curvedarrow@style,CF@full-CF@full](\CF@arrow@start@name)..controls#1..(\CF@arrow@end@name);
\begin{document}

\title{Precision and Sensitivity in Detailed-Balance Reaction Networks}
\author{Tom de Greef \and Saeed Masroor \and Mark A. Peletier \and Rudi Pendavingh}

\begin{abstract}
We study two specific measures of quality of chemical reaction networks, \emph{Precision} and \emph{Sensitivity}. The two measures arise in the study of \emph{sensory adaptation}, in which the reaction network is viewed as an input-output system. Given a step change in input, Sensitivity is a measure of the magnitude of the response,  while Precision is a measure of the degree to which the system returns  to its original output for large time. High values of both are necessary for high-quality adaptation. 

We focus on reaction networks without dissipation, which we interpret as detailed-balance, mass-action networks. We give various upper and lower bounds on the optimal values of Sensitivity and Precision, characterized in terms of the stoichiometry, by using a combination of ideas from matroid theory and differential-equation theory.

Among other results, we show that  this class of non-dissipative systems contains networks with arbitrarily high values of both Sensitivity and Precision. This good performance does come at a cost, however, since certain ratios of concentrations need to be large, the network has to be extensive, or the network should show strongly different time scales.\\

\textbf{Key words.} Sensory adaptation, reaction network, dissipation, matroid.
\end{abstract}

\maketitle

\section{Introduction}

\subsection{Dissipation and adaptation}
It has been known at least since Szilard~\cite{Szilard29} and Landauer~\cite{Landauer61} that practical information processing requires the dissipation of free energy. In recent years interest has arisen in the application of this idea to chemical and biochemical systems, and studies have been made of the role of dissipation in decision making~\cite{QianReluga05}, concentration sensing~\cite{Tu08,MehtaSchwab12,SkogeNaqviMeirWingreen13,GovernWolde13TR}, signal transduction~\cite{LapidusHanWang08,BartonSontag13}, behaviour of oscillators~\cite{WangXuWang08,XuZhangWangWang13,CaoWangOuyangTu15}, error correction~\cite{MuruganHuseLeibler12},  sensory adaptation~\cite{LanSartoriNeumannSourjikTu12,LanTu13,SartoriGrangerLeeHorowitz14,Lan15,BoDel-GiudiceCelani15}, and various others.

A common theme in these works is a focus on the  relationship between the quality of the processing on one hand and the magnitude of the dissipation on the other; in most cases the conclusion is that dissipation `improves the situation', in the sense of leading to higher accuracy, speed, or reliability. 
%In biological systems this is natural, since energy dissipation is also costly for the organism. 
However, much of the current literature is based on single examples, in which simplified models are studied, and often the tuning of the amount of dissipation amounts to a single scalar parameter. 

In fact, given an arbitrary chemical reaction system, there appear to be multiple definitions of the `amount of dissipation' at a given parameter point, which differ in whether they apply only to a stationary point or also to dynamic states, whether they include stochastic fluctuations, and whether they operate at a macroscopic or a microscopic level. This makes it difficult to compare results and make clear statements.

As a first step towards such clear statements, in this paper we approach the problem from the other end: we consider \emph{dissipation-free systems}, and ask the question to which extent these can or can not process information. We do this with the information-processing example of \emph{sensory adaptation}.

\medskip

Adaptation is best explained with the example of the bacterium \emph{E. coli}. As part of its food-finding strategy, \emph{E. coli} propels itself in a straight line through its environment, while monitoring the concentration of e.g.\ glucose outside the cell. Depending on whether this concentration increases or decreases during this `run', it will continue to move for longer or shorter in that direction; upon stopping, it turns to a random new direction, and starts a new run. This stochastic motion has a bias in the direction of increasing concentration, and this is how the bacterium finds its food.

In order to behave in this way, the bacterium has to convert small changes in concentration (a few percent up or down) into large changes in behaviour: ie. it has to show large \emph{Sensitivity}, in the terminology that we define below. At the same time the sensing mechanism has to `reset' or `zero itself' after such a change, in order to be ready for the next change in concentration: a simplified version of this will be called \emph{Precision} below. Most chemical reaction systems do not show this combination of Sensitivity and Precision, but when they do, we call them \emph{adaptive}. 

In this paper we study such adaptive systems, and more precisely, we ask the question 
\begin{quote}
Q. To which extent can a \emph{non-dissipative} system perform such adaptation?
\end{quote}
This question is not only inspired by the general link between dissipation and functionality, already mentioned above. In recent work, Lan, Tu, and co-authors conclude that adaptive systems are necessarily dissipative~\cite{LanSartoriNeumannSourjikTu12,LanTu13}. If that is the case, then the answer to the question above should be `not at all'. 

In fact, the situation turns out to be different, and surprising: we will see that a non-dissipative system is in fact perfectly capable of adaptation,  in the sense that Sensitivity and Precision can both be arbitrarily high. 
%Non-dissipative systems, therefore, can certainly perform information-processing activities.
However, there are strong limitations, as we shall also see: such good performance requires an extreme setup, in the sense that (a) certain ratios of steady-state concentrations have to be  large, (b) time-scale separation has to be significant, or (c) large networks are required. 

\subsection{Content of this paper}

In order to discuss what `non-dissipative' systems can or can not do, one needs a proper definition of this class of systems. In this paper we define `non-dissipative' systems as \emph{chemical reaction networks with mass-action kinetics satisfying detailed balance}. This is a delicate issue, however, and we discuss it in more detail in Section~\ref{sec:discussion}.

Given this class, question Q above asks for the `most adaptive' behaviour that such a system can perform. To structure this discussion, we will consider the structure of the network---the stoichiometry---to be given, and ask to which extent variation of the kinetic parameters allows the system to have large Precision and Sensitivity (which we define below). 

We start the development in Section~\ref{sec:setup} by defining the various objects and concepts. In Section~\ref{sec:precision} we investigate the Precision, and specifically prove bounds on the \emph{minimal Precision}, or \emph{maximal inverse Precision}. Although we are interested in systems with \emph{large} Precision, not small Precision, the concept of maximal inverse Precision plays a role in the study of Sensitivity in Section~\ref{sec:sensitivity}. There we show that we can make systems with large Precision and Sensitivity, by choosing the stoichiometry and the kinetic parameters in the right way. 

The proof of Theorem~\ref{th:UpperBoundsInvP}, a combinatorial characterization of minimal Precision, is inspired by matroid theory, and in Section~\ref{sec:matroid} we explain this connection. In Section~\ref{sec:dissipative} we give an example of a dissipative system with high Sensitivity and Precision that is small and has only moderate concentration ratios. We conclude with a discussion of the results.

\section{Setup}
\label{sec:setup}

In this section we give mathematical definitions of the objects that we will be considering.

\subsection{Chemical reaction networks}

A chemical reaction network is a set of reactions between chemical species $X_s$, $s\in \sS$,
\[
\sum_{s\in \sS} \alpha_{sr} X_s \leftrightharpoons \sum_{s\in \sS} \beta_{sr} X_s.
\]
Here $\alpha_{sr}$ and $\beta_{sr}$ are the \emph{stoichiometric coefficients}, which we assume to be non-negative numbers. This leads to the following definition.

\begin{definition}[Systems]
\label{def:system}
A \emph{system} is a triple $(\sS,\sR,\sN)$, where
\begin{itemize}
\item $\sS$ is a finite set of \emph{species};
\item $\sR$ is a finite set of \emph{reactions};
\item $\sN= \alpha - \beta$, $\alpha, \beta\geq0$, is a \emph{stoichiometric matrix}, an $\sS\times\sR$ matrix of real numbers such that $\sN_{sr}$ is the relative increase or decrease of species $s$ under reaction $r$.
\end{itemize}
A \emph{kinetic system} is a quadruple $(\sS,\sR,\sN,\sK)$, where $\sK$ is a function that gives, for each set of concentrations $c = (c_s)_{s\in \sS}$ of the species, and for each reaction $r\in \sR$, the net rate of transformation $\sK_r(c)$ in that reaction.
\end{definition}

For any kinetic system, the evolution of the concentrations of the species is given by the ordinary differential equation\footnote{All quantities in this paper can be considered dimensionless, if necessary by non-dimensionalization against standard SI units.}\begin{equation}
\label{eq:ode}
\dot c_s = -\sum_{r\in \sR} \sN_{sr}\sK_r(c)
\qquad \text{or}\qquad
\dot{c}  = - \sN \sK(c).
\end{equation}

\begin{example}
\refstepcounter{example}
\label{ex:A}
\noindent \textbf{Example \ref{ex:A}.}
Consider the following reactions between species $X_1$, $X_2$, and $X_3$:
\begin{equation}
2 X_1  \overset{k_1^+}{\underset{k_1^-}\rightleftharpoons } 
 X_2+X_3,   \qquad 
 X_2   \overset{k_2^+}{\underset{k_2^-}\rightleftharpoons }  X_3.  \nonumber
\end{equation} 
For this system the species set $\sS$ is $\{1,2,3\}$, the reaction set $\sR$ is $\{1,2\}$; the stoichiometric matrix $\sN$ and the kinetic function $\sK$ are
\begin{equation*}
\sN = \left(\begin{matrix}
 2 & 0 \\
 -1 & 1 \\
 -1 & -1
 \end{matrix}\right),
\qquad
\sK(c) =  \left(\begin{matrix}
k_1^+ c_1^2 - k_1^- c_2 c_3  \\
 k_2^+ c_2 - k_2^- c_3
 \end{matrix}\right) =: \begin{pmatrix} \sK_1\\\sK_2 \end{pmatrix}.
\end{equation*}
The dynamics of this reaction network is described by the ODE~\eqref{eq:ode}, which reads for this system
\begin{equation}
\label{eq:ode-exampleA}
\frac{d}{dt} \begin{pmatrix} c_1\\c_2\\c_3 \end{pmatrix} 
= \begin{pmatrix}
  -2\sK_1 \\
  \sK_1 - \sK_2 \\
  \sK_1 + \sK_2
  \end{pmatrix}
= \begin{pmatrix}
  -2k_1^+ c_1^2 +2k_1^- c_2 c_3\\
  k_1^+ c_1^2 - k_1^- c_2 c_3 -  k_2^+ c_2 + k_2^- c_3\\
  k_1^+ c_1^2 - k_1^- c_2 c_3 + k_2^+ c_2 - k_2^- c_3
\end{pmatrix}.
\end{equation}

\begin{figure}[H]
\labellist
\small
\pinlabel{time} [t] at 245 2
\pinlabel{\rotatebox{90}{concentration}} [r] at 2 180
\endlabellist
\includegraphics[height=0.3\textwidth]{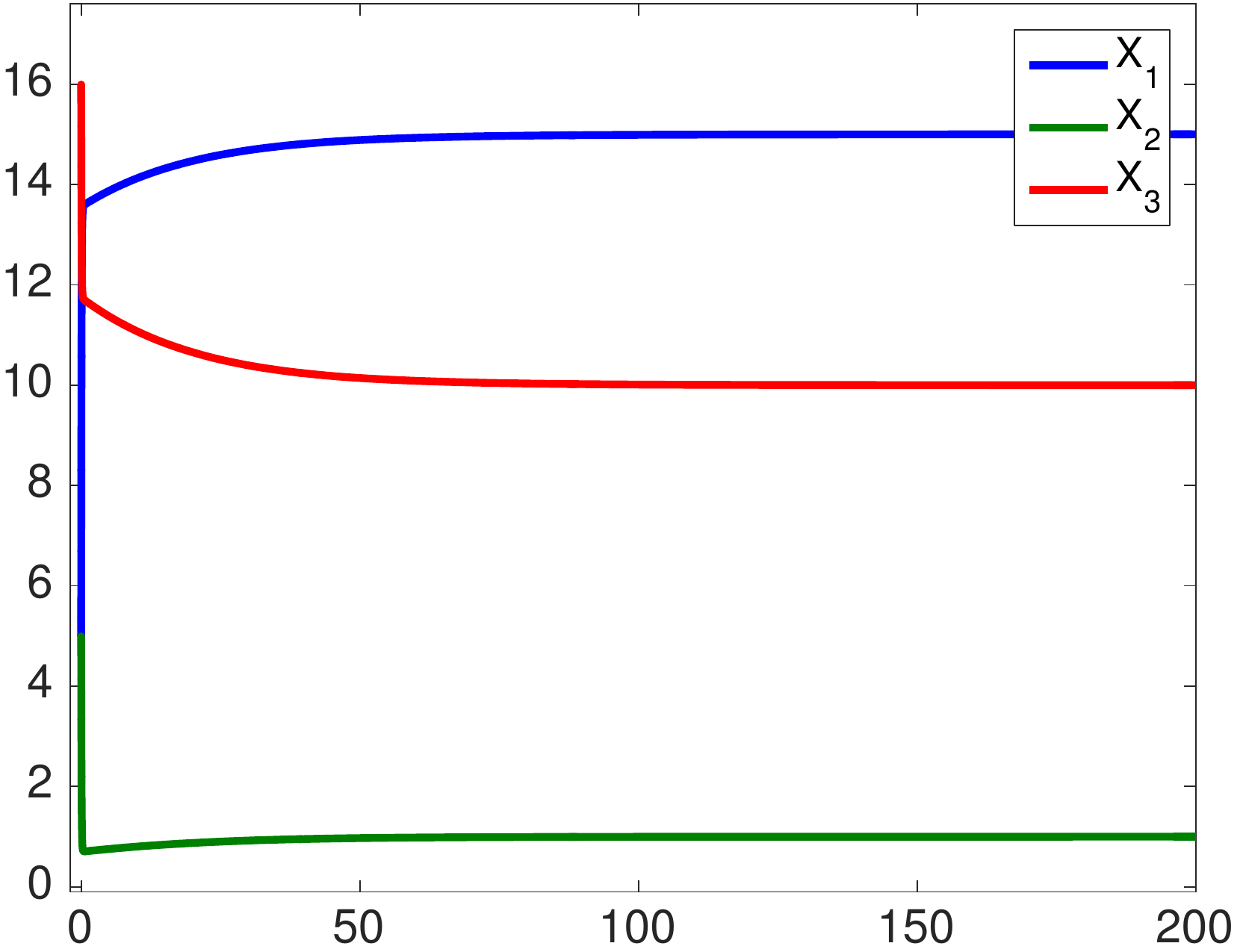}
\qquad 
\labellist
\small
\pinlabel{time} [t] at 245 10
\pinlabel{\rotatebox{90}{concentration}} [r] at 2 180
\endlabellist
\includegraphics[height=0.3\textwidth]{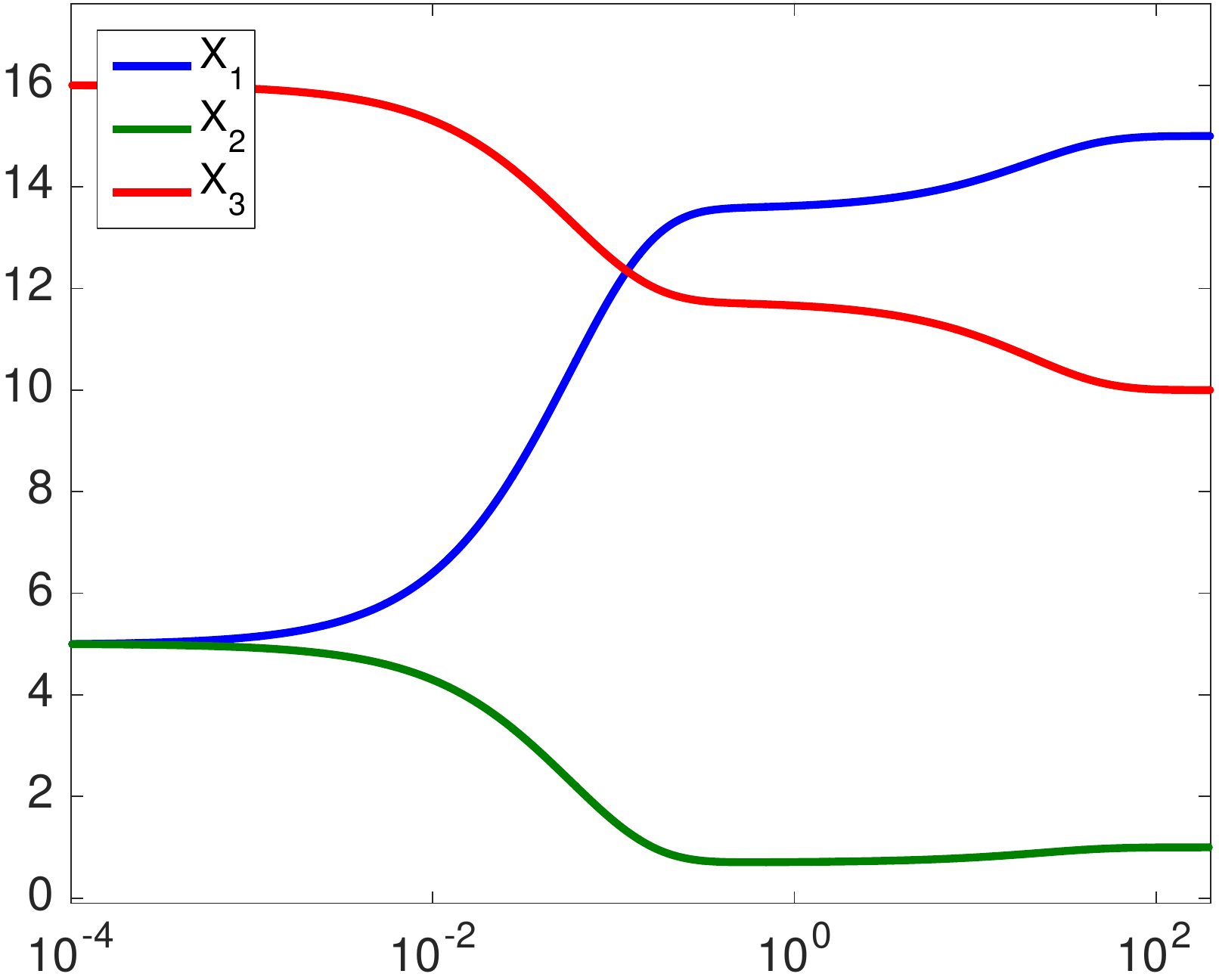}
\vskip6\jot
\caption{A solution of equation~\eqref{eq:ode-exampleA}, with parameters 
%$\overline c = (15,1,10)$ and $k=(10,0.1)$. 
% k_+^1 = 10 * 15^-2
$k^+_1 = 0.044$, 
% k^-_1 = 10 * 1^-1 * 10^-1
$k^-_1 = 1$,
% k^+_2 = 0.1 * 1^-1
$k^+_2 = 0.1$,
% k^-_2 = 0.1 * 10^-1
$k^-_2 = 0.01$, and
$c(0)= (5,5,16)^T$ . The system reaches a steady state $\overline{c}=(15,10,1)^T$. The same solution is plotted both against linear time (left) and logarithmic time (right).  The evolution contains multiple time scales; in the right figure the behaviour at these different time scales is easier to recognize, and for this reason we mostly use logarithmic time axes below.}
\label{fig:exampleA}
\end{figure}
\end{example}

\subsection{Mass-action and detailed balance}

\begin{definition}[Mass-action, detailed-balance kinetics]
Given a system $(\sS,\sR,\sN,\sK)$, \emph{mass-action detailed-balance kinetics} is given by functions $\sK_r$ of the form\footnote{Mass-action kinetics are of the form
\begin{equation*}
\sK_r(c) =  k_r^+ c^{\alpha_r} - k_r^- c^{\beta_r} 
\end{equation*}
and the structure~\eqref{eq:DBMA-kinetics} follows from the detailed-balance assumption that there exists a stationary state $\overline c$ at which all reactions are in equilibrium. See e.g.~\cite{ErdiToth89}.}
\begin{equation}
\label{eq:DBMA-kinetics}
\sK_r(c) = k_r\biggl[ \Bigl(\frac{c}{\overline c}\Bigr)^{\alpha_{r}} - \Bigl(\frac{c}{\overline c}\Bigr)^{\beta_{r}}\biggr],
\end{equation}
where we write $\alpha_r,\beta_r$ for the column vector of $\alpha$ and $\beta$ corresponding to reaction $r$, and the notation $c^{\alpha_r}$ stands for the monomial $\prod_{s\in \sS} c_s^{\alpha_{sr}}$. The function~\eqref{eq:DBMA-kinetics} is characterized by the parameters $(\overline c,k)\in \R^\sS_+\times \R^\sR_+$.
\end{definition}
%This shows that the class of all possible detailed-balance, mass-action kinetics is characterized by the set of pairs
%\[
%(\overline c,k)\in \R_+^\sS\times \R_+^\sR.
%\]
We often write $(\sS,\sR,\sN,(\overline c,k))$\footnote{This 4-tuple does not completely determine $\alpha$ and $\beta$, only $\sN=\alpha-\beta$, and therefore does not contain enough information to characterize the full kinetics~\eqref{eq:DBMA-kinetics}. The linearization of~\eqref{eq:DBMA-kinetics} at a stationary point only depends on $\sN=\alpha-\beta$, however, and since the Sensitivity and Precision are defined in terms of this linearization, this contains the necessary information for our purposes.} for the kinetic system generated by this pair $(\overline c,k)$. Note that $c = \overline c$ is a stationary point for~\eqref{eq:DBMA-kinetics}, but there typically are other stationary points. This is related to the fact that  when $\range(\sN)$ is a strict subspace of $\R^\sS$, then the evolution~\eqref{eq:ode} takes place in a subset, a simplex:
\begin{definition}[Stoichiometric simplex]
Let $W = \range(\sN)$. For any $\gamma\in \R_+^\sS$, the \emph{stoichiometric simplex} is the relatively open simplex
\[
G(\gamma) := \bigl(\gamma + W\bigr) \cap \R_+^\sS.
\]
\end{definition}
The stoichiometric simplex is the set of positive concentrations that can be reached by starting from $\gamma$ and assigning arbitrary rates to each of the reactions. It is invariant under the evolution~\eqref{eq:ode}.

\begin{example}
\noindent
\textbf{Example \ref{ex:A} (continued).} 
For Example \ref{ex:A}, the range of $\sN$ is the set $(1,1,1)^\perp$, implying that the ODE~\eqref{eq:ode-exampleA} admits the conservation law $c_1+c_2+c_3 = \mathrm{constant}$. Consequently, the stoichiometric simplices are the sets
\[
\{\,(c_1,c_2,c_3)\in \R^3_+: c_1+c_2+c_3 = \mathrm{constant}\, \}.
\]
\end{example}

\begin{lemma}[Stationary states]
\label{lem:EL}
For given $(\overline c,k)\in \R^\sS_+\times\R^\sR_+$, there exists exactly one stationary point of~\eqref{eq:ode} in each simplex $G(\gamma)$, and each solution in $G(\gamma)$ converges to it for large time. We indicate this stationary point by $\hat c[\gamma]$.
The mapping $\gamma \mapsto \hat c[\gamma]$ is a smooth mapping from $\R_+^\sS$ to $\R_+^\sS$; $\hat c$ is the unique solution of the equations
\begin{equation}
\label{eq:EL}
\sN^T \log \frac {\hat c[\gamma]}{\overline c} = 0, \qquad \hat c[\gamma]\in G(\gamma).
\end{equation}
\end{lemma}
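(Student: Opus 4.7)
The plan is the classical Horn--Jackson strategy: build a strictly convex Lyapunov function from the detailed-balance structure, minimise it over the stoichiometric simplex, and read off existence, uniqueness, and convergence. I introduce the relative entropy
\[
F(c) := \sum_{s\in\sS}\bigl[c_s\log(c_s/\overline c_s) - c_s + \overline c_s\bigr],
\]
defined on $\R_+^\sS$ and extended continuously to $\overline{\R_+^\sS}$ via $0\log 0 := 0$. It is strictly convex (Hessian $\diag(1/c_s)$), nonnegative, coercive as $|c|\to\infty$, and has $\nabla F(c) = \log(c/\overline c)$ on the interior. Along~\eqref{eq:ode}, setting $\mu := \log(c/\overline c)$ and using the DBMA form~\eqref{eq:DBMA-kinetics}, a direct calculation gives
\[
\dot F \;=\; -\sum_{r\in\sR} k_r\bigl(e^{\alpha_r^T\mu} - e^{\beta_r^T\mu}\bigr)(\alpha_r - \beta_r)^T\mu \;\leq\; 0,
\]
with equality iff $\sN^T\mu = 0$. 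Note that this condition actually forces $\sK(c)=0$ and is thus a priori stronger than the ODE stationarity condition $\sN\sK(c)=0$; the detailed-balance structure will recover the reverse implication.

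For existence and uniqueness in $G(\gamma)$, I minimise $F$ over $\overline{G(\gamma)}$: continuity and coercivity yield a minimiser $\hat c$, and strict convexity makes it unique. The only delicate step is showing $\hat c$ sits in the relative interior $G(\gamma)$ and not on a face $\{c_s=0\}$: if $\hat c_s = 0$ I take any $c_0\in G(\gamma)$ (nonempty by hypothesis) and observe that the one-sided derivative of $F$ along the segment from $\hat c$ toward $c_0$ contains the term $c_{0,s}\log(0/\overline c_s)=-\infty$, contradicting minimality. The interior first-order condition $\nabla F(\hat c)\perp W = \range\sN$ is exactly $\sN^T\log(\hat c/\overline c)=0$, i.e.~\eqref{eq:EL}. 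Conversely, at any ODE stationary point $c^*\in G(\gamma)$ one has $\dot F(c^*)=0$ trivially, which by the equality case of the bound above forces $\sN^T\log(c^*/\overline c)=0$; hence $c^* = \hat c$ by the uniqueness just proved.

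For convergence, the trajectory stays in the compact set $\{F\le F(c(0))\}\cap\overline{G(\gamma)}$ and has nonempty $\omega$-limit set $\Omega$ on which $F$ is constant, say $F\equiv F^*$. LaSalle invariance combined with the equality case of $\dot F \le 0$ gives $\Omega\cap G(\gamma)\subseteq\{\hat c\}$; so provided no $\omega$-limit point lies on $\partial G(\gamma)$, we conclude $\Omega=\{\hat c\}$ and hence $c(t)\to\hat c[\gamma]$. Ruling out boundary $\omega$-limits is the main obstacle I anticipate: it follows from the standard fact that under detailed balance with strictly positive $\overline c$ each coordinate hyperplane $\{c_s=0\}$ is repelling for trajectories in $G(\gamma)$ (a global-attractor/siphon-style argument specific to DBMA). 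Finally, smoothness of $\gamma\mapsto\hat c[\gamma]$ comes from the inverse function theorem applied to
\[
\Psi : W^\perp \to \R^\sS/W,\qquad \Psi(y) := [\,\overline c\exp(y)\,],
\]
which parametrises solutions of~\eqref{eq:EL} via $\hat c = \overline c\exp(y)$ with $y\in W^\perp=\ker\sN^T$. Its derivative $v\mapsto[\diag(\overline c e^y)v]$ is injective, since $\diag(\overline c e^y)v\in W$ with $v\in W^\perp$ gives $\langle v,\diag(\overline c e^y)v\rangle=0$, forcing $v=0$ by positive-definiteness of the diagonal weight; dimensions match, $\Psi$ is a diffeomorphism onto its image, and $\hat c[\gamma]=\Psi^{-1}([\gamma])$ depends smoothly on $\gamma$.
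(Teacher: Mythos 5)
Your argument is, in substance, the route the paper itself takes: the paper delegates existence, uniqueness, and attraction to Horn--Jackson~\cite{HornJackson72}, obtains~\eqref{eq:EL} from the decay of the free energy, and gets smoothness from the implicit function theorem; you simply write out the underlying Lyapunov argument in full. Your computation of $\dot F$, the sign/equality analysis $(e^{a}-e^{b})(a-b)\geq 0$, the interior-minimiser argument via the $-\infty$ one-sided derivative on a face, the identification of ODE stationary points with the minimiser through the equality case, and the smoothness argument via the map $\Psi$ (which is the implicit-function-theorem step in parametrised form, with injectivity of $D\Psi$ coming from the same $v\perp\diag(\overline c e^y)v$ trick used in Lemma~\ref{lem:P_o}) are all correct.

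The one genuine gap is exactly the step you flag and then wave away: ruling out $\omega$-limit points on $\partial G(\gamma)$ is \emph{not} a standard fact. The assertion that the positive equilibrium attracts every trajectory of the interior of its stoichiometric compatibility class is the Global Attractor Conjecture (here restricted to detailed-balanced systems), and Horn and Jackson's original global-convergence claim contained precisely this gap, as Horn himself later acknowledged; it is settled only in special cases (conservative classes where the relevant faces contain no boundary equilibria, single linkage class, strongly endotactic networks, low-dimensional classes). LaSalle plus the strict Lyapunov function gives you local asymptotic stability and convergence of any trajectory whose $\omega$-limit set meets $G(\gamma)$, but not the blanket statement ``each solution in $G(\gamma)$ converges to it.'' The paper inherits the same issue through its citation, so this does not put you behind the paper's own proof, but you should either invoke one of the partial results that covers the networks actually used, or weaken the convergence claim rather than label the boundary-repulsion step as routine.
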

\begin{proof}
The existence, uniqueness, and attraction properties are well-known in the field of chemical reaction theory; see e.g.~\cite[Th.~6A]{HornJackson72}. The equation~\eqref{eq:EL} can be verified by inspecting~\eqref{eq:DBMA-kinetics} or by using the fact that free energy decreases along a solution~\cite{HornJackson72,MaasMielkeInPrep}. Finally, the smooth dependence follows from applying the implicit function theorem  to~\eqref{eq:EL}.
\end{proof}

\begin{remark}[Modifying $\overline c$]
Below we will consider systems described by parameters $(\overline c,k)$, and concentrate on linearizations at some stationary point $\hat c[\gamma]$, which \emph{a priori} need not be equal to $\overline c$. However, without loss of generality we can assume that the stationary point equals $\overline c$, by describing the same system by a new but equivalent set of parameters $(\hat c[\gamma],\widetilde k)$, where
\[
\widetilde k_r 
:= k_r \Bigl(\frac{\hat c[\gamma]}{\overline c}\Bigr)^{\sN_r^+} 
= k_r \Bigl(\frac{\hat c[\gamma]}{\overline c}\Bigr)^{\sN_r^-} .
\]
The equations~\eqref{eq:ode}-\eqref{eq:DBMA-kinetics} are identical for $(\overline c,k)$ and $(\hat c[\gamma],\widetilde k)$.
We will therefore always assume that the stationary point under consideration is $\overline c$.
\end{remark}

\begin{remark}[Independence of $k$]
As long as $k$ is a vector with strictly positive components, $\hat c[\gamma]$ is independent of $k$, as can be recognized from the absence of $k$ from~\eqref{eq:EL}. (If one of the components of $k$ vanishes, however, this amounts to removing a column from $\sN$, which modifies~\eqref{eq:EL} and leads to a different equation. We will use this idea below.)
\end{remark}

\subsection{Precision and Sensitivity for detailed-balance reaction networks}
\label{subsec:Precison-Sensitivity-definition}
%We now construct input-output systems out of detailed-balance mass-action reaction networks. Given such a network, we need to choose the forms of the input and the output. 
%A concentration is a natural choice for the output variable. For the input variable the only meaningful choice is to also choose one of the concentrations,
%
%The input variable should be a parameter in the 

We now think of a chemical reaction network as an input-output system, and we restrict ourselves to detailed-balance, mass-action kinetics networks. The input and output variables are concentrations, indicated by $i,o\in \sS$, $i\not= o$. 

The rest of this paper is based on the following setup.

\medskip
\begin{quote}
\textbf{The adaptation experiment.} Prepare the system in a steady state $\overline c$; at time zero, instantaneously add an amount of the input species to the system, thus increasing~$c_i$; observe the evolution of the output variable $c_o$ (see Figure~\ref{fig:informalAdaptation}).
\end{quote}

\medskip

\begin{figure}[ht]
\labellist
\small
\pinlabel {addition of $X_i$} [l] at 44 10
\pinlabel time  by -1.1 0 at 270 28
\pinlabel {small when Precision is large} [l] at 278 72
\pinlabel {$\substack{\text{\small large when}\\[\jot]\text{\small Sensitivity is large}}$} [l] at 489 94
\pinlabel $c_o$ [br] at 48 110
\endlabellist
\includegraphics[width=0.9\textwidth]{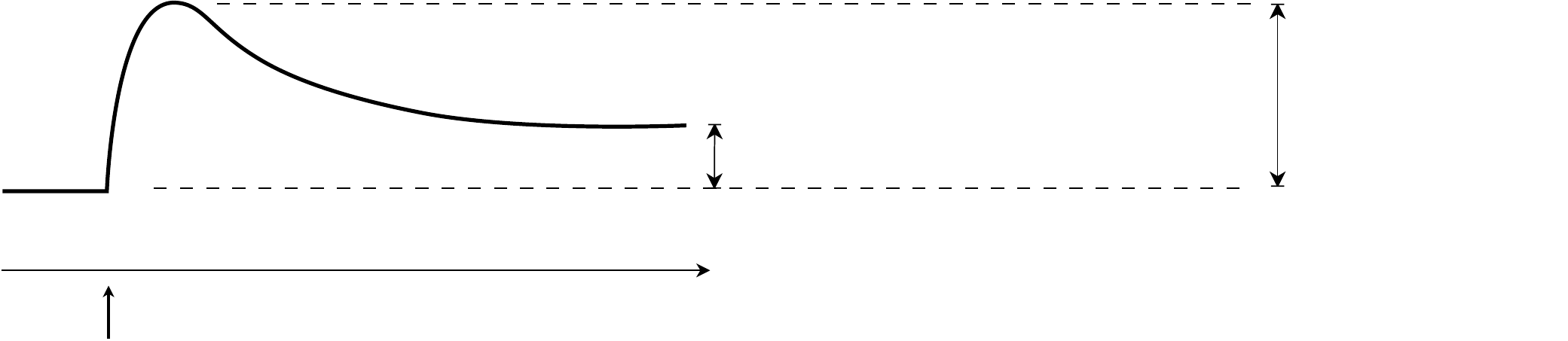}
\caption{The adaptation experiment. The system starts in steady state, and the output $c_o$ is constant. At time zero a small amount of species $X_i$ is added, leading to a small increase in concentration $c_i$. The system is no longer in steady state, and it evolves towards a new steady state. \emph{Sensitvity} is a measure of the maximal deviation along the time course, and \emph{Precision} is a measure of the degree to which the new steady state is close to the previous one.}
\label{fig:informalAdaptation}
\end{figure}

In this situation, the \emph{Sensitivity} is defined\footnote{%
This terminology follows~\cite{MaTrusinaEl-SamadLimTang09}. Note however that the term `sensitivity' also may refer to the variation of a stationary state under variation of a parameter, as in `parameter sensitivity'~\cite{HeinrichSchuster96} or more generally as the depedence of a model prediction on the assumptions and parameters~\cite{SaltelliRattoAndresCampolongoCariboniGatelliSaisanaTarantola08}.}  as a normalized measure of the strength of the response of $c_o$ to the change in $c_i$:

\begin{definition}[Sensitivity]
\label{def:sensitivity}
Given a detailed-balance, mass-action system $(\sS,\sR,\sN,(\overline c,k))$,  and given a choice of input and output species $i,o\in\sS$, 
the Sensitivity is defined as
\begin{equation}
\label{defeq:sensitivity}
S := \lim_{\e\to0} \frac{\ \log \,\sup\limits_{t\geq0} c^\e_o(t) - \log \overline c_o\ }{\log c^\e_i(0) - \log \overline c_i\strut}
=  \lim_{\e\to0} \frac{\ \log \sup\limits_{t\geq0} c^\e_o(t) - \log c_o^\e(0)\ }{\log c^\e_i(0) - \log c^0_i},
\end{equation}
where  $c^\e(t)$ is the solution of~\eqref{eq:ode} with initial datum $c^\e(0)={\overline c+ \e e_i}$.
\end{definition}

This could also be written in shorthand notation as 
\[
S = \frac {d\log \sup_{t\geq0} c_o(t)}{d\log c_i(0)}.
\]

High Sensitivity indicates that small increases in input concentration $c_i$ lead to large swings in output~$c_o$. The appearance of the logarithms both for $c_i$ and for $c_o$ means that \emph{relative} changes are measured. This is related to the fact that  mass-action kinetics makes the response to absolute changes dependent on the reference value. (There is recent interest in networks providing exact \emph{fold-change} responses, which are sensitive to \emph{relative} changes, but otherwise independent of the reference value (e.g.~\cite{GoentoroShovalKirschnerAlon09}). This corresponds to the Sensitivity above being independent of the parameter point~$\overline c$ at which it is measured.) Logarithmic derivatives are also used in Metabolic Control Analysis~\cite{HeinrichSchuster96,Fell97}.

\medskip

The \emph{Precision}, on the other hand, refers to the degree to which the output settles back to the original value at long times:
\begin{definition}[Precision, \cite{MaTrusinaEl-SamadLimTang09}]
\label{def:precision}
In the same context as Definition~\ref{def:sensitivity}, the Precision $P$ is defined through its inverse,
\begin{equation}
\label{defeq:precision} 
P^{-1} := \lim_{\e\to0} \frac{\log c^\e_o(+\infty) - \log \overline c_o}{\log c^\e_i(0) - \log \overline c_i\strut}
=  \lim_{\e\to0} \frac{\ \log c^\e_o(+\infty) - \log c_o^\e(0)\ }{\log c^\e_i(0) - \log c^0_i}.
\end{equation}
\end{definition}

High Precision indicates that the stationary output changes little when the parameter point changes---again, both measured in relative magnitudes. 

\medskip

Since both Precision and Sensitivity are defined in terms of small-perturbation limits, they have equivalent definitions in terms of a linearized version of equation~\eqref{eq:ode}. Because of the logarithmic derivatives, the most convenient form of this equation arises by perturbing the stationary state~$\overline c$ {multiplicatively}: if we set $c^\e(t)=(1+\e u(t))\overline{c}$, then to leading order the function $u$ solves the equation
\begin{equation}
\label{eq:ode-linearized}
\dot u = A u, \qquad
A_{ss'} =  -\frac{1}{\overline c_{s}}\sum_{r\in \sR} \sN_{sr}k_r  \sN_{s'r}.
\end{equation}
(This equation can also be found by linearizing~\eqref{eq:ode} in the usual way, and transforming to new coordinates, scaled by $\overline c$). For given $u(0)$, the solution of this equation is $u(t) = e^{tA}u(0)$. 

\begin{lemma}[Alternative formulations of Precision and Sensitivity]
Again in the same context, let $t\mapsto u(t)$ be the solution of~\eqref{eq:ode-linearized} with initial data $u(0)=e_i$. 
The Precision and Sensitivity then have the alternative formulations
\begin{equation}
\label{eq:alt-def-precision-and-sensitivity}
P^{-1} = \lim_{t\to\infty} u_o(t) = \lim_{t\to\infty} (e^{tA})_{oi}
\qquad
\text{and}
\qquad
S =  \sup_{t\geq0} u_o(t) = \sup_{t\geq0} (e^{tA})_{oi}.
\end{equation}
In addition, recalling the notation $\hat c[\gamma]$ for the stationary state in the stoichiometric simplex containing $\gamma$, we have
\begin{equation}
\label{altdef:P}
P^{-1} = \frac{\overline c_i}{\overline c_o} 
  \lim_{\e\downarrow 0} \frac{1}\e \bigl( \hat c[\overline c+\e e_i] - \overline c\bigr)_o.
\end{equation}
\end{lemma}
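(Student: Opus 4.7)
The plan is to Taylor-expand the quantities appearing in Definitions~\ref{def:sensitivity} and~\ref{def:precision} around $\e=0$, identify the leading terms with the solution of the linearized system, and then invoke Lemma~\ref{lem:EL} for the third identity.

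First I would write $c^\e(t) = \overline c + \e v(t) + O(\e^2)$ with $v(0)=e_i$, substitute into~\eqref{eq:ode}--\eqref{eq:DBMA-kinetics}, and expand $(c^\e/\overline c)^{\alpha_r} = 1 + \e \sum_s \alpha_{sr} v_s/\overline c_s + O(\e^2)$ (similarly for $\beta_r$). This gives $\sK_r(c^\e) = \e k_r (\sN^T D^{-1} v)_r + O(\e^2)$ with $D=\diag(\overline c)$, and hence $\dot v = -\sN K \sN^T D^{-1} v$ where $K=\diag(k)$. The rescaling $u := \overline c_i D^{-1} v$ then converts this into $\dot u = Au$ with $A$ as in~\eqref{eq:ode-linearized} and $u(0) = e_i$; the factor $\overline c_i$ is chosen to absorb the normalization by $\log\overline c_i$ present in the denominators of~\eqref{defeq:sensitivity}--\eqref{defeq:precision}.

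Next I would replace the logarithms in~\eqref{defeq:sensitivity} and~\eqref{defeq:precision} by their Taylor expansions: $\log c^\e_i(0) - \log \overline c_i = \e/\overline c_i + O(\e^2)$ and $\log c^\e_o(t) - \log \overline c_o = \e v_o(t)/\overline c_o + O(\e^2)$. Dividing yields the ratio $\overline c_i v_o(t)/\overline c_o + O(\e) = u_o(t) + O(\e)$; sending $t\to\infty$ produces $P^{-1} = u_o(\infty)$, and since $u(t) = e^{tA}e_i$ we recognize $u_o(t) = (e^{tA})_{oi}$. The analogous computation, with the supremum in place of the limit, gives the Sensitivity formula. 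Identity~\eqref{altdef:P} then follows from Lemma~\ref{lem:EL}: because $c^\e(0)\in G(\overline c+\e e_i)$, the trajectory satisfies $c^\e(+\infty)=\hat c[\overline c+\e e_i]$, whence $v_o(\infty) = \lim_{\e\downarrow 0}\e^{-1}(\hat c[\overline c+\e e_i]-\overline c)_o$, and multiplication by $\overline c_i/\overline c_o$ produces the claim.

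The main obstacle is justifying the exchange $\lim_{\e\to0}\sup_{t\geq0} = \sup_{t\geq0}\lim_{\e\to0}$ required for the Sensitivity. This needs control on the $O(\e^2)$ remainder that is uniform in $t\geq 0$, and not merely on compact intervals. I would obtain this from the detailed-balance structure of $A = -D^{-1}\sN K \sN^T$, which is self-adjoint and negative semi-definite with respect to the weighted inner product $\langle x,y\rangle_D := x^T D y$; consequently $A$ has a spectral gap on the complement of $\ker A$, so that $v_o(t)$ approaches $v_o(\infty)$ exponentially fast. Combined with Grönwall-type estimates controlling the nonlinear correction on compact time intervals, and with the fact (Lemma~\ref{lem:EL}) that every trajectory of~\eqref{eq:ode} converges to $\hat c[\overline c+\e e_i]$ uniformly for small $\e$, this suffices to make the remainder $o(\e)$ uniformly in $t$, legitimizing the interchange and completing the proof.
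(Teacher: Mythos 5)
Your proposal is correct and follows essentially the same route as the paper, whose entire proof is the single line ``These formulas follow by direct manipulation'': you have simply carried out the multiplicative linearization $c^\e=(1+\e u)\overline c$ that the paper already sets up just before the lemma, expanded the logarithms, and invoked Lemma~\ref{lem:EL} for~\eqref{altdef:P}. Your additional care about interchanging $\lim_{\e\to0}$ with $\sup_{t\ge0}$ via the self-adjointness of $A$ in the $\diag(\overline c)$-weighted inner product addresses a point the paper silently glosses over, and is a welcome (and correct) strengthening rather than a deviation.
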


\begin{proof}
These formulas follow by direct manipulation.
\end{proof}

\begin{example}
\noindent
\textbf{Example \ref{ex:A}  (continued).}
First we demonstrate an adaptation experiment for a small but finite $\e$. Fix $(\overline{c},k)$ in~\eqref{eq:ode-exampleA}. We perturb the system by adding $\e$ of $X_1$ to the system. The evolution is shown in Figure~\ref{fig:exA-c}. Next, in the limit of small perturbations, the matrix $A$ in~\eqref{eq:ode-linearized} is found to be
 \begin{equation*}
 A = \left(
  \begin{array}{ccc}
    -\dfrac{4 k_1}{\overline{c}_1}\phantom- 
      & \phantom-\dfrac{2 k_1}{\overline{c}_1}
      & \phantom-\dfrac{2 k_1}{\overline{c}_1} \\[4\jot]
    \phantom-\dfrac{2 k_1}{\overline{c}_2}\phantom- 
      & -\dfrac{k_1+k_2}{\overline{c}_2} 
      & \phantom-\dfrac{k_2-k_1}{\overline{c}_2} \\[4\jot]
    \phantom-\dfrac{2 k_1}{\overline{c}_3}\phantom-
      & \phantom-\dfrac{k_2-k_1}{\overline{c}_3}
      & -\dfrac{k_1+k_2}{\overline{c}_3}  \\
  \end{array}
  \right).
 \end{equation*}
The expressions~\eqref{eq:alt-def-precision-and-sensitivity} for Precision and Sensitivity imply that the we can obtain these two quantities by plotting the time trajectory of the corresponding entry in the matrix exponential $(e^{tA})_{oi}$. In this example we take $X_1$ to be the input and we plot the three entries in the first column of $(e^{tA})$ in Figure~\ref{fig:exA-u}.
\begin{figure}[H]
\centering
\labellist
\small
\pinlabel{time} [t] at 245 2
\pinlabel{\rotatebox{90}{conc.}} [r] at -7 90
\pinlabel{\rotatebox{90}{conc.}} [r] at -7 220
\pinlabel{\rotatebox{90}{conc.}} [r] at -7 340
\endlabellist
\includegraphics[height=0.3\textwidth]{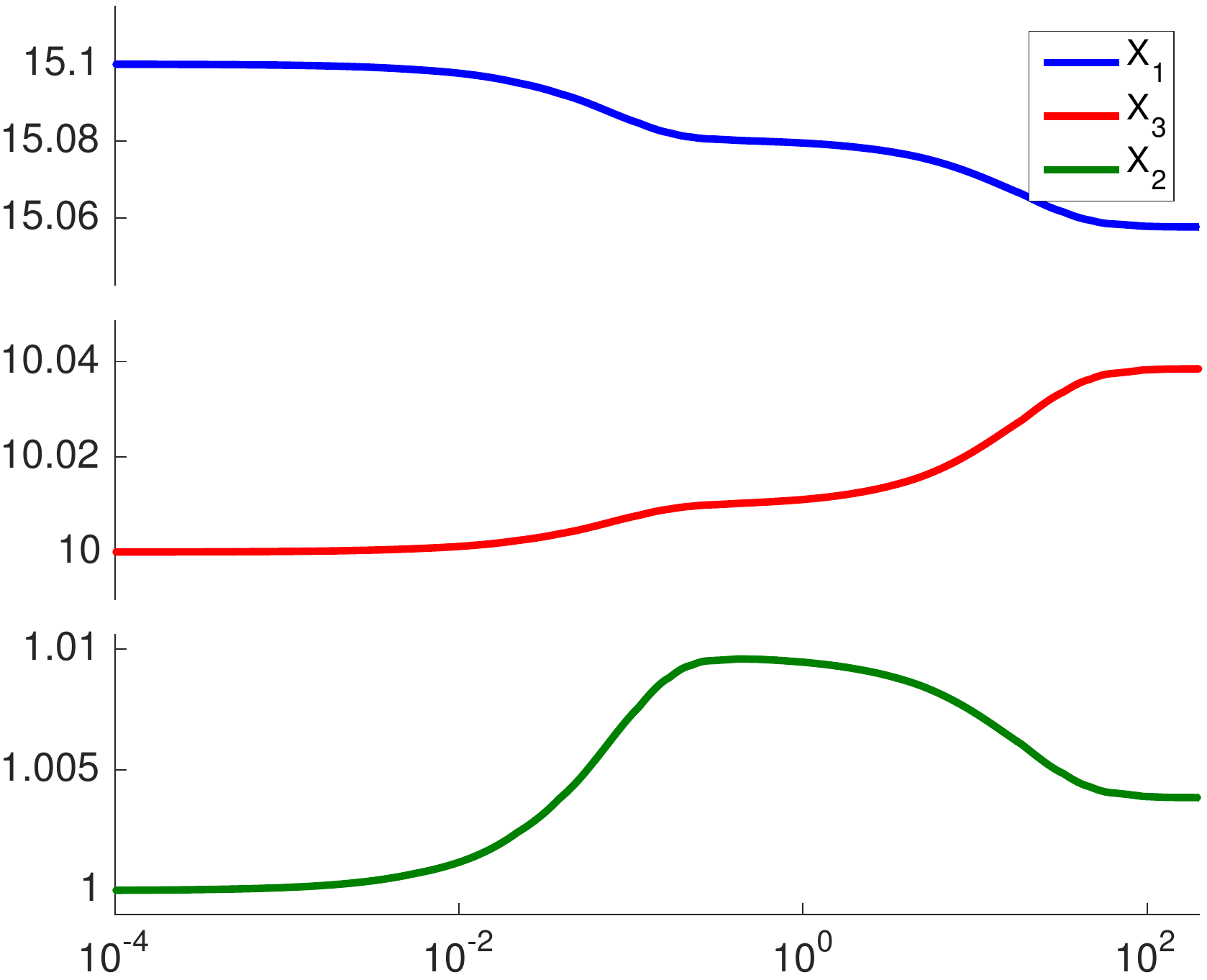}
\qquad 
\labellist
\small
\pinlabel{time} [t] at 265 0
\endlabellist
\includegraphics[height=0.3\textwidth]{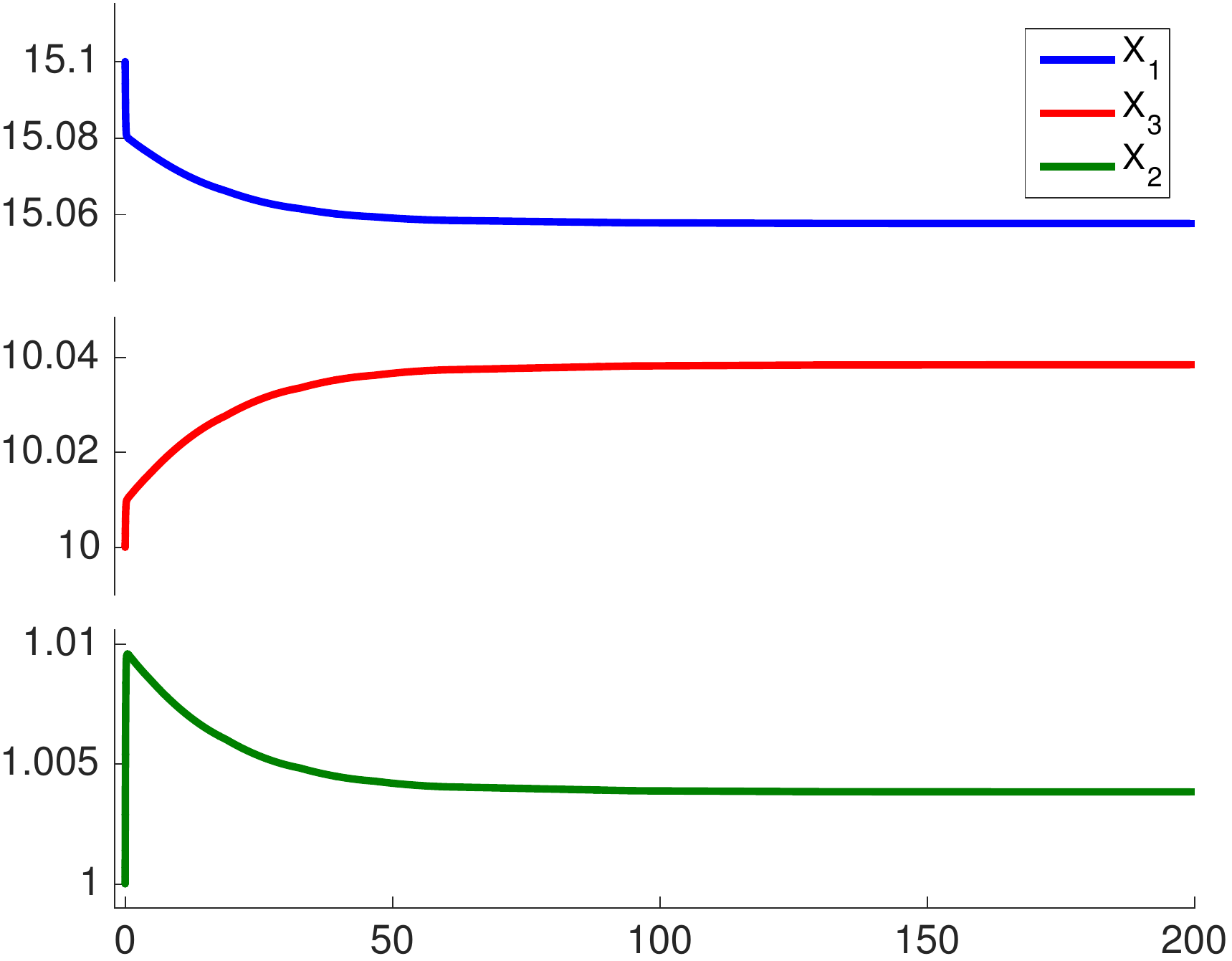}
\\[5\jot]
\caption{The solution of~\eqref{eq:ode-exampleA} in linear time (left) and in logarithmic time (right) using the same parameters as in Figure~\ref{fig:exampleA}, $\overline c = (15,1,10)$ and $k=(10,0.1)$. The initial condition is $c(0)=\overline{c} + \e e_1$ with $\e =0.1$.}
\label{fig:exA-c}
\end{figure}
\begin{figure}[H]
  \begin{center}
  \labellist
  \small
  \pinlabel{time} [t] at 245 2
  \pinlabel{\rotatebox{90}{concentration}} [r] at -5 180
  \pinlabel {$S\approx1.44$} [b] at 273 345
  \pinlabel {$P^{-1}\approx0.58$} [l] at 468 168
  \endlabellist
  \includegraphics[width=.5\textwidth]{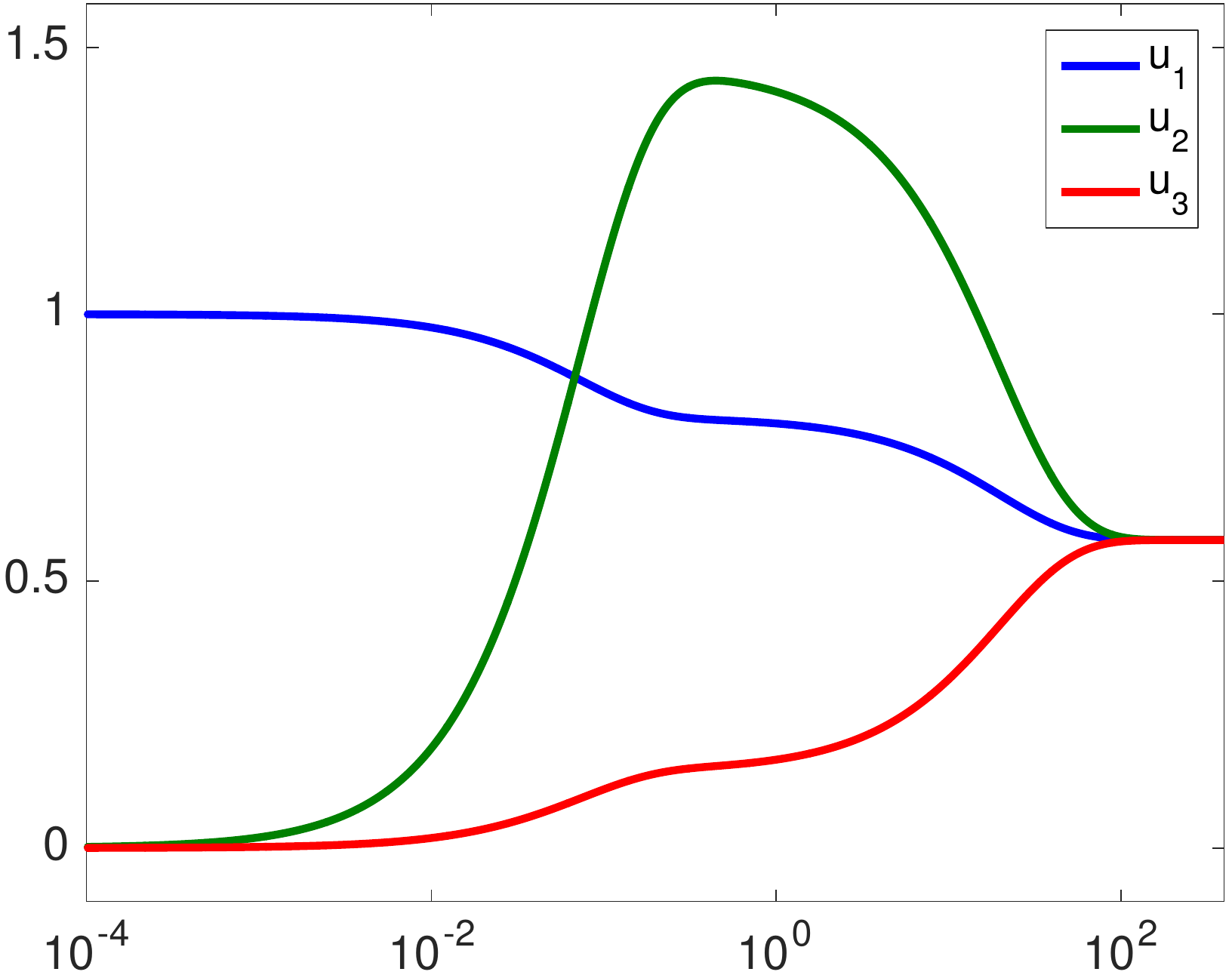} 
  \end{center}
  \vskip7\jot
  \caption{The solution of the equation $\dot u = Au$, $u(0)= (1,0,0)^T$, for the rescaled linearized concentrations $u_1,u_2,u_3$ of the species $X_1,X_2,X_3$. The parameters are the same as in Figure~\ref{fig:exampleA}, $\overline c = (15,1,10)$ and $k=(10,0.1)$.}
\label{fig:exA-u}
\end{figure}
In the graph above one can read off the Sensitivity and Precision: choosing $X_2$ as output, the Sensitivity is the maximal value of  $u_2$ over all time (about $1.44$), and the Precision is the limiting value of $u_2$ as time tends to infinity ($0.58$). Note that the choice of input variable $i=1$ is encoded in the initial data for~$u$, and the choice of output variable $o=2$ means that we measure $u_2$.
\end{example}

\subsection{Maximization over parameters}

As we mentioned in the introduction, in much of this paper we take the position that the stoichiometry $(\sS,\sN,\sR)$ of a system is given, and we ask within which bounds we can make Sensitivity and Precision vary by the freedom of choosing coefficients $\overline c$ and $k$. This leads to the following three numbers:
\begin{align*}
\maxS(\sS,\sN,\sR) &:= \sup \{S: \ (\overline c,k)\in \R_+^\sS\times\R_+^\sR\Bigr\},\\
\maxP(\sS,\sN,\sR) &:= \sup \{P: \ (\overline c,k)\in \R_+^\sS\times\R_+^\sR\Bigr\},\\
\maxInvP(\sS,\sR,\sN) &:= \sup\Bigl\{P^{-1}:\ (\overline c,k)\in \R_+^\sS\times\R_+^\sR\Bigr\}.
\end{align*}
The maximal \emph{inverse} Precision plays a role in characterizing maximal Sensitivity (see Section~\ref{subsec:maxS-maxInvP}).

\begin{example}
\refstepcounter{example}
\label{ex:B}
\noindent
\textbf{Example \ref{ex:B}} (Arbitrarily large $S$ and $P$). We generalize Example~A by replacing $2X_1$ with $nX_1$ for some $n \in \N$ and a choice of $X_1$ for input and $X_3$ for output.
\begin{equation}
n X_1  \overset{k_1^+}{\underset{k_1^-}\rightleftharpoons } 
 X_2+X_3,   \qquad 
 X_2   \overset{k_2^+}{\underset{k_2^-}\rightleftharpoons }  X_3.  \nonumber
\end{equation} 
We want to explore the values of Precision and Sensitivity. Since the system is small we are able to explicitly calculate the Precision directly from Definition~\ref{def:precision} as follows. The stoichiometric matrix enforces that $2c_1 + n c_2  + n c_3 $ is constant along time trajectory of concentrations in every stoichiometric simplex. Therefore $2\overline{c}_1 + n \overline{c}_2  + n \overline{c}_3 = a$, where $a$  is a positive constant.  Let $\overline c^{\e}$ be the perturbed steady state when $\e$ is added to $X_1$ and then $2\overline c^{\e}_1 + n \overline c^{\e}_2  + n \overline c^{\e}_3 = a+2\e $. For both steady states we have
 $\sK(\overline{c})=\sK (c^{\e})= (0,0)^T$. With these relations the calculation of inverse Precision is straightforward:
\begin{equation}
\label{eq:PrecisionExampleB}
{P}^{-1}  = \frac{\overline{c}_1}{\overline{c}_3} \frac{d c^{\e}_3}{d \e}\bigg|_{\e = 0} = \dfrac{2n}{4+n^2\dfrac{\overline{c}_2}{\overline{c}_1}+n^2\dfrac{\overline{c}_3}{\overline{c}_1}}. 
\end{equation}
We observe that by choosing the ratio $({\overline c_2}+{\overline c_3})/{\overline c_1}$ large enough, one can have arbitrarily high Precision, i.e.\ $\maxP = \infty$.

This example is able to show Sensitivity arbitrarily close to $n$, i.e. $\maxS = n$. It is an easy exercise to show that the first reaction, as a subsystem, has $P^{-1}= n/(1+\overline{c}_3/\overline{c}_2+n^2\overline{c}_3/\overline{c}_1)$, and consequently $\maxInvP=n$. If we choose $(\overline c,k)$ in such a way that the first reaction happens much faster than the second, then $u_3$ can rise arbitrarily close to $n$ and after some time that the second reaction takes place, it comes down arbitrarily close to 0. Figure~\ref{fig:exB} shows the plot of $u_3$ for three values of $n$. 
%In Example~D we show how one can choose a chain of bimolecular reactions instead of having $nX_1$ in one reaction. 
\begin{figure}[H]
  \begin{center}
  \labellist
   \small
   \pinlabel{time} [t] at 207 2
   \pinlabel{\rotatebox{90}{concentration}} [r] at -2 180
  \endlabellist  \includegraphics[width=.4\textwidth]{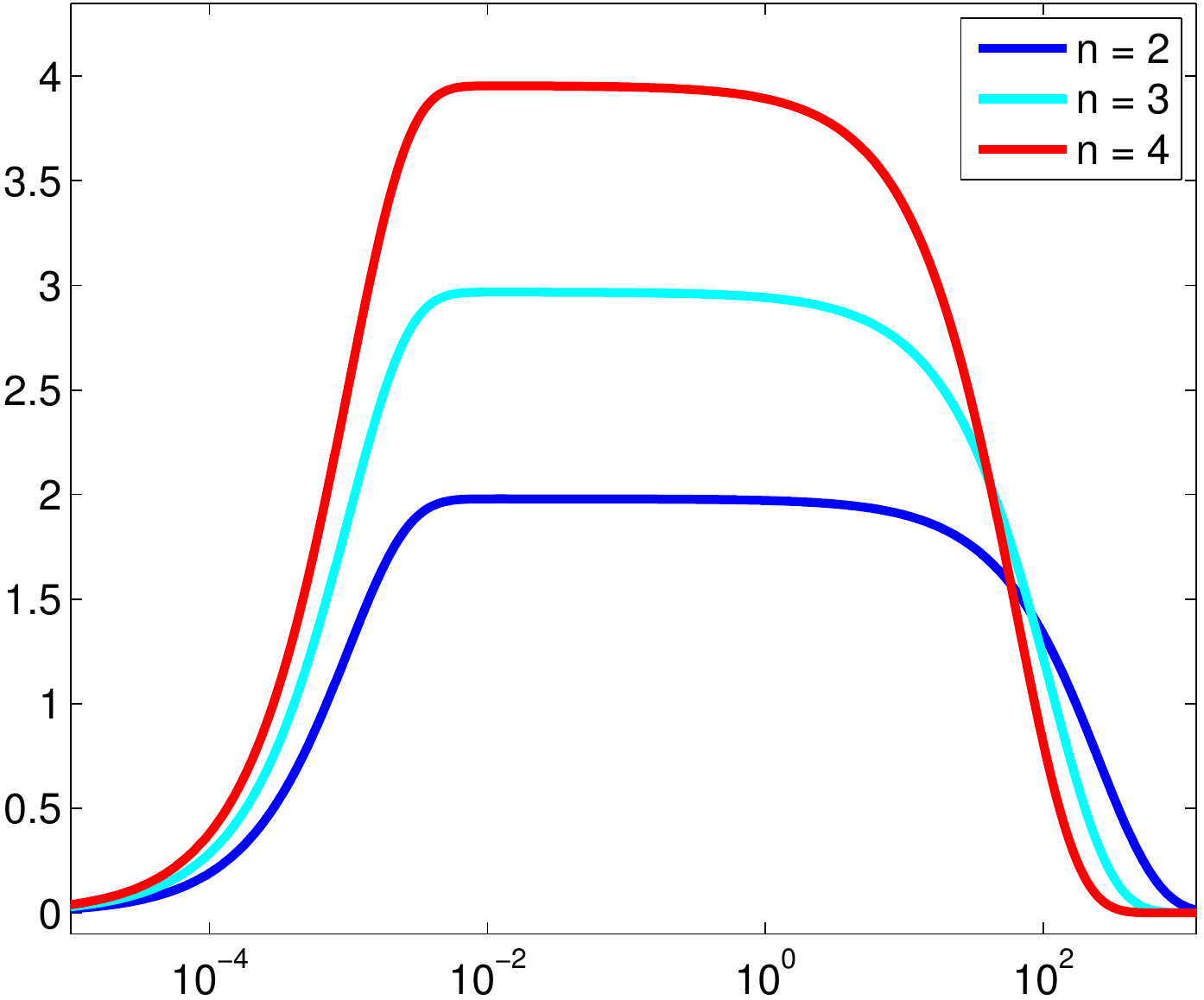} 
  \end{center}
  \caption{Plot of $u_3$ from the solution of the equation $\dot u = Au$, $u(0)= (1,0,0)^T$ for three values of $n$. The parameters are $\overline c = (10^2,10^5,10^{-2})$ and $k=(10,0.1)$.}
\label{fig:exB}
\end{figure}
\end{example}

\section{Results: Properties of the Precision}
\label{sec:precision}

As described above, the aim of this paper is to explore the degree to which detailed-balance, mass-action systems can have large Sensitivity and large Precision. In this section we focus on the Precision, and prove two main results. The first is an explicit formula for \emph{homogeneous} systems; the second is a characterization of the \emph{minimal} Precision in terms of the stoichiometry, which will be of use in Section~\ref{sec:sensitivity}.

We choose a system $(\sS,\sR,\sN,(\overline c,k))$, and we fix an input species $i\in \sS$ and an output species $o\in \sS$. 

\medskip

In some cases the Precision can be calculated explicitly. Example~\ref{ex:B} above is an instance of this; another instance is the class of \emph{homogeneous} systems. A reaction network is called \emph{homogeneous} of order $\kappa$ if for $\kappa \in \mathbb{N}$ fixed, all the reactions are of the type 
\[ 
\kappa  X_{s} \overset{k_j^+}{\underset{k_j^-}\rightleftharpoons }  \kappa X_{s'}. 
\]
Each column of $\sN$ in such reaction network consists of two nonzero elements with values  $\kappa$ and $-\kappa$. Therefore $(1,\hdots,1) \sN=0$, which implies that $\sum_s \overline{c}_s$ is constant in each stoichiometric simplex. Below we derive an explicit formula for the Precision of homogeneous systems.

\begin{theorem}[Precision for homogeneous systems]
\label{th:precision-homogeneous-systems}
If a reaction network is homogeneous of some order $\kappa$, then for any input and output
\begin{equation}
\label{eq:homogPrecision}
P = \frac{\sum_s \overline c_s}{\overline c_i}.
\end{equation}

\end{theorem}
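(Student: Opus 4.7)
The plan is to start from the characterization \eqref{altdef:P}, which expresses $P^{-1}$ purely in terms of how the stationary state $\hat c[\overline c + \e e_i]$ moves when the stoichiometric simplex is shifted in the $e_i$ direction. Since Lemma~\ref{lem:EL} characterizes this stationary state via $\sN^T \log(\hat c/\overline c)=0$ together with the simplex constraint, all I need to do is solve these two conditions for a homogeneous network and read off the component at $o$.

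For the first step, I would use the defining property of homogeneity: every reaction column of $\sN$ has the form $\kappa(e_s - e_{s'})$ for some ordered pair $(s,s')$. The equation $\sN^T\log(\hat c/\overline c)=0$ therefore decouples into the pairwise identities
\[
\log\frac{\hat c_s}{\overline c_s}=\log\frac{\hat c_{s'}}{\overline c_{s'}}
\]
indexed by the reactions. Viewing species as vertices and reactions as edges, this says that the function $s\mapsto \log(\hat c_s/\overline c_s)$ is constant on every connected component of the reaction graph. Assuming the network is connected (the general case reduces to this by treating each component separately, yielding $P=\infty$ whenever $i,o$ lie in different components), there is a single constant $\lambda>0$ with $\hat c = \lambda\,\overline c$.

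The second step is to pin down $\lambda$ from the simplex constraint. Because every column of $\sN$ is of the form $\kappa(e_s-e_{s'})$, we have $(1,\dots,1)\sN=0$, so $\sum_s c_s$ is a conserved quantity. Applied to $\hat c[\overline c + \e e_i]$, this gives $\sum_s \hat c_s = \sum_s \overline c_s + \e$, and combined with $\hat c=\lambda\overline c$ yields $\lambda = 1 + \e/\sum_s\overline c_s$. Substituting into \eqref{altdef:P},
\[
P^{-1} = \frac{\overline c_i}{\overline c_o}\lim_{\e\downarrow 0}\frac{(\lambda-1)\overline c_o}{\e}
     = \frac{\overline c_i}{\sum_s \overline c_s},
\]
which inverts to \eqref{eq:homogPrecision}. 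The whole argument is essentially linear-algebraic once the logarithms are stripped out; the only mildly delicate point is the role of connectedness of the reaction graph, which I would flag explicitly so that the formula is read either with $\sum_s$ restricted to the component of $i$, or under the standing assumption that the network is connected.
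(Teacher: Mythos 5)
Your proof is correct and follows essentially the same route as the paper's: both arguments show that every steady state is a scalar multiple of $\overline c$ (you via $\sN^T\log(\hat c/\overline c)=0$ decoupling into pairwise identities, the paper via the equilibrium ratios $\tilde c_s/\tilde c_{s'}=\overline c_s/\overline c_{s'}$ from~\eqref{eq:DBMA-kinetics}), then pin down the multiple with the conservation law $\sum_s c_s$ and differentiate at $\e=0$. Your explicit remark about connectedness of the reaction graph is a point the paper's proof leaves implicit, and is a worthwhile clarification, but it does not change the substance of the argument.
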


\begin{example}
\refstepcounter{example}
\label{ex:uni}
\noindent
\textbf{Example~\ref{ex:uni}.}
Unimolecular reactions are a good example of a homogeneous reaction network. Below we present a reaction diagram between four species. Letting $X_1$ be the input, the inverse Precision for the three other species is $P^{-1} = 20/91\approx 0.22$ (see~\eqref{eq:homogPrecision}).
\begin{figure}[H]
\raisebox{-0.5\height}{%
 \labellist
  \small
  \pinlabel{time} [t] at 255 15
  \pinlabel{\rotatebox{90}{concentration}} [r] at -2 130
  \endlabellist
  \includegraphics[height=0.3\textwidth]{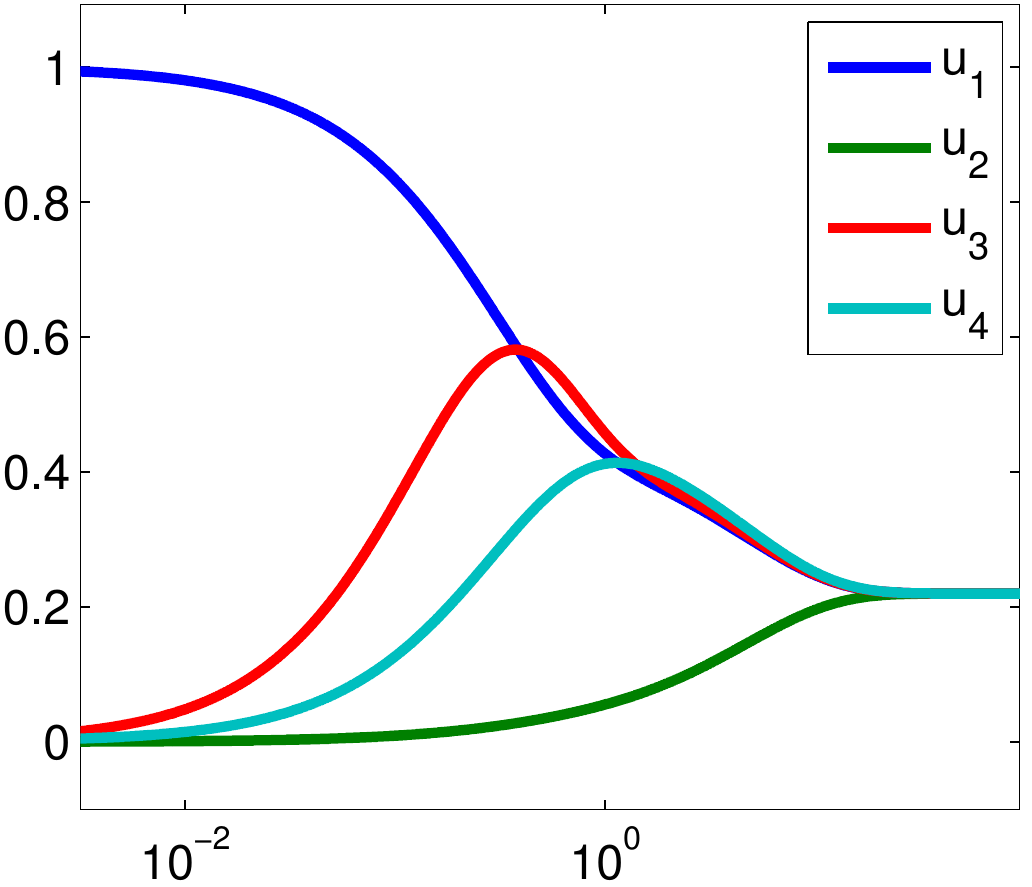}}
\qquad
\schemestart
    $X_1$\arrow(x1--x2){<=>[$k_1$]}[45] $X_2$ \arrow(--x3){<=>[$k_2$]}[-45] $X_3$   
    \arrow(@x1--x4){<=>[][$k_4$]}[-45] $X_4$ \arrow(--){<=>[][$k_5$]}[45] $X_3$   
      \arrow(@x1--@x3)  {<=>[$k_3$]}  
\schemestop
\vskip\jot
\caption{ Solution of $\dot{u}=Au$ (left) with $u(0)=(1,0,0,0)^T$, $\overline{c}=(20,50,1,20)^T$, and $k = (5,0.01,5,30,0.1)^T$. The reaction diagram (right).  }
\end{figure}
\end{example}

\begin{proof}
Fix the parameters $(\overline c,k)$. 
Equation~\eqref{eq:DBMA-kinetics} implies that at any steady state $\tilde c$, for a reaction involving species $s$ and $s'$, the ratio $\tilde{c}_{s}/\tilde c_{s'}$ equals $\overline c_s/\overline{c}_{s'}$ and thus is the same for each steady state. This implies that all steady states are multiples of each other. On the other hand we have a conservation law $\sum_s \overline{c}_s = a $ for some positive constant $a$. In view of the definition of Precision (following the notation of Section~\ref{subsec:Precison-Sensitivity-definition}) we have $\sum_s c_s^\e (\infty) = a+\e$. This can be written
as
 \[
 c_o^\e (\infty) = \dfrac{a+\e}{\sum_s \dfrac{c_s^\e (\infty)}{ c_o^\e (\infty)} },
 \]
in which the denominator is a sum of constant steady state ratios, hence independent of $\e$. Finally the inverse Precision is 
\begin{align*}
P^{-1}  =  \lim_{\e\to0} \frac{\log c^\e_o(\infty) - \log \overline c_o}{\log c^\e_i(0) - \log \overline c_i\strut}  
  =  \dfrac{\overline c_i}{\overline c_o} \dfrac{{d}c_o^\e (\infty) }{{d}\e}\Big|_{\e=0} 
  = \dfrac{\overline c_i}{\sum_s \overline{c}_s}. \nonumber
\end{align*}

\end{proof}

Note that the Precision for such a homogeneous network is always larger than one, and can be made arbitrarily large by tuning $\overline c$---specifically, by making the stationary concentration of the input variable small with respect to the other concentrations. This might seem like a good thing; however, we will see below that such a choice makes it difficult to have high \emph{Sensitivity}, and therefore `good' systems do not choose this route. 

\medskip

In fact, as we shall see in Section~\ref{sec:sensitivity}, there is a strong suggestion that having high Sensitivity requires \emph{low} Precision \emph{for a subsystem}. Because of this reason, it is interesting to consider \emph{lower} bounds on Precision, or equivalently, upper bounds on inverse Precision. In the rest of this section we characterize the \emph{maximal inverse Precision} for a given system $(\sS,\sR,\sN)$,
\[
\maxInvP(\sS,\sR,\sN) := \sup\Bigl\{P^{-1}:\ (\overline c,k)\in \R_+^\sS\times\R_+^\sR\Bigr\},
\]
in terms of the stoichiometry of the system, ie. in terms of $\sN$.

%\begin{theorem}[Upper bounds on inverse Precision]
%\label{th:UpperBoundsInvP}
%Fix a system $(\sS,\sR,\sN)$ and input and output species $i,o\in \sS$. When varying the kinetic parameters $\overline c$ and $k$, we have %the following characterization:
%\begin{align*}
%\sup\Bigl\{P^{-1}:\ &(\overline c,k)\in \R_+^\sS\times\R_+^\sR\Bigr\}\\
%&= \sup\Bigl\{ u_o :  \sN^T u=0,\  u-e_i \perp \diag \Bigl(\frac1 {\overline{ c}}\Bigr) \ker( \sN^T),\ \overline { c}\in \R_+^\sS \Bigr\} \\
%&= \max \Bigl\{ u_o :  \sN^T u=0,\  u_i=1, \ \supp u \text{ is a circuit of }  E \Bigr\} \\
%&= \max \Bigl\{ - \det(B) / \det(B') : B,  B' \text{ bases of } E,\ B\setminus B'=\{o\}, B'\setminus B=\{i\}\Bigr\}.
%\end{align*}
%\end{theorem}
%
%(add remarks about the unknown objects here; do something with $E$; do something with the proof)

\medskip

The {\em support} of a vector $x\in\R^\sS$ is $\supp(x):=\{s\in \sS: x_s\neq 0\}$. Considering a linear space $W\subseteq \R^\sS$, 
we say that a vector $w\in W$ is  {\em elementary}  if $w$ is nonzero and  $\supp(w)$ is minimal in $W$, i.e. there exists no nonzero $w'\in W$ with $\supp w'\subsetneqq\supp w$. The {\em orthogonal complement} of $W$ is $$W^\perp:=\{u\in \R^\sS: u \perp w\text{ for all } w\in W\}.$$
In the following theorem, maximal inverse precision is characterized in terms of elementary vectors of $W$ and $W^\perp$, where  $W:=\range(\sN)$. Observe that if $u\in W^\perp$ and $c,c'\in G(\gamma)$, then $c'-c\in W$ and hence $u^Tc -u^Tc'=u^T(c-c)=0$; so each $u\in W^\perp$ describes an invariant linear combination $\sum_{s\in \sS} u_s c_s$ of the stoichiometric simplex $G(\gamma)$. 
\begin{theorem}[Sharp upper bounds on inverse Precision]
\label{th:UpperBoundsInvP}
Fix a system $(\sS,\sR,\sN)$ and input and output species $i,o\in \sS$, and let $W:=\range(\sN)$. 
%When varying the kinetic parameters $\overline c$ and $k$, we have the following characterization:
Then:
\begin{subequations}
\label{equiv:maxInvP-formulations}
\begin{align}
\notag
\maxInvP &:= \sup\Bigl\{P^{-1}:\ (\overline c,k)\in \R_+^\sS\times\R_+^\sR\Bigr\}\\
&=\sup\Bigl\{ u_o :  u-e_i = \diag \Bigl(\frac1 {\overline{ c}}\Bigr) w,\ \overline { c}\in \R_+^\sS,\ w\in W,\ u\in W^\perp  \Bigr\}\\
&= \max \{ u_o:  u\text{ elementary in }W^\perp\text{ with } u_i=1, \text{ or }u=0\} 
  \label{equiv:maxInvP-formulations-Wperp}\\
&= \max \{ w_i :  w\text{ elementary in }W\text{ with } w_o=-1, \text{ or }w=0\} 
\label{equiv:maxInvP-formulations-W}
\end{align}
\end{subequations}
\end{theorem}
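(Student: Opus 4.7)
The plan is to chain through three equivalences. First, I would establish $(1) = (2)$ by differentiating the stationarity relation \eqref{eq:EL} at $\gamma = \overline c$. The implicit function theorem gives that the directional derivative $v := \partial \hat c[\gamma]/\partial\gamma_i\big|_{\gamma=\overline c}$ satisfies the linear conditions $v - e_i \in W$ (from $\hat c[\gamma] \in \gamma + W$) and $\diag(1/\overline c)\,v \in W^\perp$ (from differentiating $\sN^T \log(\hat c/\overline c) = 0$). The substitution $u := \overline c_i \diag(1/\overline c)\,v$ and $w := \overline c_i\,(v - e_i)$ then produces $u \in W^\perp$, $w \in W$, $u - e_i = \diag(1/\overline c)\,w$, and, via \eqref{altdef:P}, $u_o = P^{-1}$. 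The substitution is invertible (via $v := e_i + w/\overline c_i$), so any triple $(u,w,\overline c)$ satisfying the constraints of (2) arises from a valid system, and the two suprema coincide.

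For $(2) = (3)$ I would proceed with a two-sided argument. For the lower bound (achievability), given elementary $u^* \in W^\perp$ with $u^*_i = 1$ and support $T = \supp u^*$, take $\overline c^{(\e)}_s = \e$ for $s \in T \setminus \{i\}$ and $\overline c^{(\e)}_s = 1$ otherwise; the unique intersection $u(\overline c^{(\e)})$ of $W^\perp$ with $e_i + \diag(1/\overline c^{(\e)})\,W$ is a rational function of $\e$. Since $T$ is a circuit of the matroid $M[W]$, $u^*$ is the unique element of $W^\perp$ with support in $T$ and $i$-coordinate equal to $1$, and a direct computation of the limit of $u(\overline c^{(\e)})$ as $\e \to 0$ yields $u^*$, so $u^*_o \leq \maxInvP$. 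For the upper bound, taking the inner product of $u - e_i = \diag(1/\overline c)\,w$ with $u \in W^\perp$ and using $u \cdot w = 0$ yields the key identity $\overline c_i\,u_i = \sum_s \overline c_s\,u_s^2$, which forces $u_i \in [0,1]$. A conformal decomposition $w = \sum_j \mu_j w^{(j)}$ into elementary $w^{(j)} \in W$ with $\mu_j > 0$ and signs conformal to $w$ then yields a matching structured decomposition of $u - e_i$, and a convex-combination argument with appropriate normalization bounds $u_o$ above by the maximum of $u^{*(j)}_o$ over elementary $u^{*(j)} \in W^\perp$ with $u^{*(j)}_i = 1$.

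Finally, $(3) = (4)$ follows from oriented matroid duality between signed circuits (elementary vectors of $W^\perp$) and signed cocircuits (elementary vectors of $W$) of $M[W]$: for each elementary $u^* \in W^\perp$ with $u^*_i = 1$ and $u^*_o = \lambda$, a circuit-cocircuit elimination argument in the oriented matroid produces an elementary $w^* \in W$ with $w^*_o = -1$ whose support meets $\supp u^*$ in exactly $\{i,o\}$, and the orthogonality $u^* \cdot w^* = 0$ then reads $w^*_i - \lambda = 0$, giving $w^*_i = \lambda$; the symmetric argument in the reverse direction yields the other inequality. The main technical obstacle will be the upper bound in $(2) \leq (3)$: the conformal decomposition of $w$ does not translate directly into a decomposition of $u$ because of the additive $e_i$ term, so a careful normalization of the elementary components and a tight account of the weights $\mu_j$ (so that the sum of their $i$-coordinates matches $u_i$) are needed to convert the decomposition into the desired bound on $u_o$.
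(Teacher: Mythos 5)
Your overall architecture --- chaining $(1)=(2)$, $(2)=(3)$, $(3)=(4)$ --- is the same as the paper's, and two of the three links are sound. The step $(1)=(2)$ by differentiating \eqref{eq:EL} is exactly the paper's Lemma~\ref{lem:P_o}. The step $(3)=(4)$ via a cocircuit $w^*$ meeting $\supp u^*$ in exactly $\{i,o\}$ and the orthogonality $w^*_i-u^*_o=0$ is exactly the paper's Lemma~\ref{lem:u_w} (the existence of such a $w^*$ is what the paper's Lemma~\ref{lem:basic} supplies). Your achievability argument for $(2)\geq(3)$ is genuinely different from the paper's and it works: the paper runs an iterative sign-repair algorithm, adding small multiples of elementary vectors $\sm u^s$, $\sm w^s$ to an initial pair, whereas you send $\overline c_s\to 0$ on $\supp(u^*)\setminus\{i\}$ and pass to the limit. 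The one thing you should spell out is why the limiting linear system is nonsingular: writing $u$ in a basis of $W^\perp$, the limit Gram matrix is singular only if some nonzero $u'\in W^\perp$ has $\supp(u')\subseteq\supp(u^*)\setminus\{i\}$, which is excluded precisely because $u^*$ is elementary. With that observation your route is arguably cleaner than the paper's.

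The genuine gap is the upper bound $(2)\leq(3)$. The identity $\overline c_i u_i=\sum_s\overline c_s u_s^2$ and the conclusion $u_i\in[0,1]$ match the paper, but the conformal decomposition $w=\sum_j\mu_j w^{(j)}$ into elementary vectors of $W$ does not produce a decomposition of $u$ into elementary vectors of $W^\perp$: the pieces $\diag(1/\overline c)\,w^{(j)}$ live in $\diag(1/\overline c)W$, not in $W^\perp$, and there is no canonical elementary $u^{*(j)}\in W^\perp$ attached to $w^{(j)}$ whose $o$-coordinate controls the contribution of that piece. Normalizing the $w^{(j)}$ so that $w^{(j)}_o=1$ and invoking $(4)$ gives $w^{(j)}_i\geq-\lambda^*$ per piece, but summing these bounds leaves the free scales $\overline c_i$, $\overline c_o$ and the components with $o\notin\supp(w^{(j)})$ uncontrolled, and I do not see how to extract $u_o\leq\lambda^*$ from it; you flag this yourself as the main obstacle, and as stated the step would fail. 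The paper closes this direction by a quite different, purely linear-programming argument: for fixed $w$ it \emph{relaxes} the equality $u-e_i=\diag(1/\overline c)w$ to the sign constraints \eqref{eq:lin} on $u$ alone, shows the resulting maximization of $u_o$ over $u\in W^\perp$ is bounded (else a ray direction $v$ with $v_o>0$, $v_i=0$ would violate $v\perp w$), and then shows that an optimizer of minimal support must be an elementary vector with $u_i=1$ (if not, perturb along a $v\in W^\perp$ of strictly smaller support with $v_o=0$ to reach another optimizer of smaller support; if $u^*_i<1$, rescale by $1+\epsilon$). Replacing your decomposition step by this basic-solution argument --- or by a direct argument that the optimum of a bounded LP over $W^\perp$ intersected with sign constraints is attained at a vector of minimal, hence elementary, support --- is what is needed to complete the proof.
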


\begin{example}
\label{ex:ExA-cont-elementary-vectors}
\noindent
Recall \textbf{Example A}, where we already observed that $W = \range(\sN) = (1,1,1)^\perp$, and $W^\perp = \Span\{(1,1,1)\}$. Therefore all elementary vectors in $W^\perp$ are multiples of $(1,1,1)$, and the characterization~\eqref{equiv:maxInvP-formulations-Wperp} reduces to the maximum over two elements:
\[
\max\Bigl\{ u_2: u\in \{(1,1,1),(0,0,0)\}\Bigr\} = 1.
\]
For~\eqref{equiv:maxInvP-formulations-W}, the space $W$ has three elementary vectors, up to scalar multiples, which are $(1,-1,0)$, $(0,1,-1)$, and $(1,0,-1)$. Recall that the input variable is 1, and the output variable 2; therefore of these three directions, the third does not appear in the maximum, since it can not be rescaled to have $w_o=w_2 =-1$. The maxmimum in~\eqref{equiv:maxInvP-formulations-W} then reduces to
\[
\max \Bigl\{ w_1: w \in \{(1,-1,0),(0,-1,1),(0,0,0)\} \Bigr\} = 1.
\]
\end{example}

\begin{example}
\refstepcounter{example}
\label{ex:negP}
\noindent\textbf{Example \ref{ex:negP}.}
As an example where the alternative options $u=0$ and $w=0$ are relevant, consider  the single reaction
\[
X_1 + X_2 \leftrightharpoons X_3, \qquad i=1, o=2.
\]
Here $\sN = (1,1,-1)^T$, $W=\Span\{(1,1,-1)\}$, and $W^\perp = (1,1,-1)^\perp$; therefore $W$ has only the elementary vector $(1,1,-1)$, up to scalar multiplication, and~\eqref{equiv:maxInvP-formulations-W} reduces to 
\[
\max\Bigl\{ w_1: w \in \{(-1,-1,1),(0,0,0)\} \Bigr\} = 0.
\]
In this case $W^\perp$ has elementary vectors $(1,-1,0)$, $(1,0,1)$, and $(0,1,-1)$, so that~\eqref{equiv:maxInvP-formulations-Wperp} becomes
\[
\max\Bigl\{ u_2: u\in \{(1,-1,0),(1,0,1), (0,0,0)\}\Bigr\} = 0.
\]
Indeed, the reaction has negative Precision for all positive values of $\overline c$ (since increase in $X_1$ always leads to decrease in $X_2$.) Therefore the max inverse Precision can reach zero, but can not be positive.
\end{example}
\bigskip
The maxima \eqref{equiv:maxInvP-formulations-Wperp} and \eqref{equiv:maxInvP-formulations-W} may be evaluated by enumerating the elementary vectors of a linear space. We will describe an algorithm for this in Section \ref{sec:matroid}.

The rest of this section is devoted to the proof of this theorem, through a series of lemmas. The first lemma provides the connection between inverse Precision on one hand and the vectors~$u$ and~$w$ that appear in Theorem~\ref{th:UpperBoundsInvP}.

\begin{lemma}\label{lem:P_o}
For given $\overline c\in \R_+^\sS$, the conditions 
\begin{equation}
\label{eq:u}
u\in W^\perp, \quad w\in W, \quad u = e_i+\diag\Bigl(\frac1{\overline{c}}\Bigr)w, 
\end{equation}
uniquely determine the pair $(u,w)$. Then
\begin{equation}
\label{char:P-u}
P^{-1} = u_o.
\end{equation}
As a consequence,
\[
\maxInvP =
\sup\left\{u_o:  u-e_i=\diag\Bigl(\frac1{\overline{c}}\Bigr)w, ~u\in W^\perp, ~w\in W, ~\overline{c}\in \R_+^\sS\right\}.
\]
\end{lemma}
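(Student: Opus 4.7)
The plan is to split the proof into two stages: first, establish the existence and uniqueness of the pair $(u,w)$ solving the system; second, identify $u_o$ with $P^{-1}$.

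For well-posedness, I would introduce $M := \diag(\overline c)$ and note that the constraints $u - e_i = \diag(1/\overline c)\,w$, $u \in W^\perp$, $w \in W$ are equivalent to $u$ being the unique element of the affine flat $e_i + M^{-1}W$ that lies in $W^\perp$. The key step is then to prove $\R^\sS = W^\perp \oplus M^{-1}W$. The dimensions sum to $|\sS|$, and if $v \in W^\perp \cap M^{-1}W$, say $v = M^{-1}w$ with $w\in W$, then
\[
0 = \langle v, w\rangle = \langle v, Mv\rangle = \sum_{s\in\sS} \overline c_s\, v_s^2,
\]
which forces $v = 0$ since $\overline c > 0$. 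This yields existence and uniqueness of $u$, and $w := M(u-e_i)$ is then forced into $W$. Equivalently, $W^\perp$ and $M^{-1}W$ are orthogonal in the weighted inner product $\langle x,y\rangle_{\overline c} := \sum_s \overline c_s x_s y_s$.

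For the identification $P^{-1} = u_o$, I plan to use formula~\eqref{altdef:P}. Let
\[
\delta := \lim_{\e\downarrow 0}\frac{1}{\e}\bigl(\hat c[\overline c + \e e_i] - \overline c\bigr).
\]
Two infinitesimal constraints pin down $\delta$. Since $\hat c[\overline c + \e e_i] \in (\overline c + \e e_i) + W$, differentiation at $\e=0$ yields $\delta - e_i \in W$. Linearizing the equilibrium equation $\sN^T\log(\hat c[\gamma]/\overline c) = 0$ of Lemma~\ref{lem:EL} at $\e = 0$ (where $\log(\hat c/\overline c) = 0$) yields $\sN^T\diag(1/\overline c)\,\delta = 0$, i.e.\ $\diag(1/\overline c)\,\delta \in \ker \sN^T = W^\perp$. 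Setting
\[
u := \overline c_i\,\diag(1/\overline c)\,\delta, \qquad w := \overline c_i(\delta - e_i),
\]
one verifies directly that $u \in W^\perp$, $w \in W$, and, using $\overline c_i\,\diag(1/\overline c)\,e_i = e_i$, that $u - e_i = \diag(1/\overline c)\,w$. By the uniqueness from the first stage, this is exactly the lemma's pair, so
\[
u_o = \frac{\overline c_i}{\overline c_o}\,\delta_o = P^{-1}
\]
by~\eqref{altdef:P}. The final formula for $\maxInvP$ then follows by taking the supremum over $\overline c \in \R_+^\sS$ and invoking the earlier remark on $k$-independence to eliminate the $k$ variable from the left-hand side.

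The only subtle point is the rescaling $u = \diag(\overline c_i/\overline c)\,\delta$, which converts the infinitesimal stationary perturbation $\delta$ into the canonical element $u \in W^\perp$ of the lemma statement; once this identification is made, everything reduces to routine linear algebra, and I expect no substantive obstacle.
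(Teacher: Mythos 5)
Your proposal is correct and follows essentially the same route as the paper: the uniqueness argument via $v\perp\diag(\overline c)v \Rightarrow \sum_s \overline c_s v_s^2=0$ is the paper's argument phrased as a direct-sum decomposition, and the identification $P^{-1}=u_o$ by differentiating the stationary-state equation $\sN^T\log(\hat c/\overline c)=0$ and the simplex constraint, then rescaling by $\overline c_i\diag(1/\overline c)$, is exactly what the paper does. The only (harmless) cosmetic difference is that you obtain existence abstractly from the dimension count $\R^\sS=W^\perp\oplus\diag(1/\overline c)W$, whereas the paper gets it constructively from the derivative $\dot c_\e$ itself.
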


\begin{proof}
The existence of a $u$ satisfying~\eqref{eq:u} will follow from the argument in the next paragraph; here we show that $u$ satisfying~\eqref{eq:u} is unique. If not, then there are distinct $u, u'$ satisfying \eqref{eq:u}. Then $v:= u-u'$ is a nonzero vector such that $v\in W^\perp$ and  $v\in \diag(\frac1{\overline{c}})W$, so that $\diag( \overline c)v \in W$, and consequently $v\perp \diag (\overline c)v$. Since $\overline c$ has strictly positive components, this implies $v=0$, a contradiction. 

We now show~\eqref{char:P-u}. For each $\e>0$, write $c_\e := \hat c[\overline c+ \e e_i]$ for the unique stationary state in the same stoichiometric simplex as $\overline c+\e e_i$. By Lemma~\ref{lem:EL}, $c_\e$ is a smooth function of $\e$; we write $\dot c_\e := dc_\e/d\e \big|_{\e=0}$, which is the vectorial rate of change of the stationary state as we add component~$X_i$. Again by Lemma \ref{lem:EL}, $\sN^T \log (c_\e/\overline c)=0$, and by differentiating we find $\sN^T (\dot c_\e/\overline c)=0$. From $\hat c[\gamma]\in \gamma+W$ follows  $\hat c[\overline c+\e e_i] - \overline c\in \e e_i + W$, and therefore we find that $\dot c_\e$ satisfies the two equations
\[
\sN^T \frac {\dot c_\e}{\overline c} = 0, \qquad \dot c_\e \in e_i + W.
\]
Defining $u = \overline c_i (\dot c_\e/\overline c) = \overline c_i \diag (1/\overline c) \dot c_\e$ these can be rewritten as
\[
\sN^T u = 0, \qquad u \in \overline c_i \diag\Bigl(\frac1{\overline c}\Bigr) (e_i+ W) = e_i + \diag\Bigl(\frac1{\overline c}\Bigr)W,
\]
which is equivalent to~\eqref{eq:u}. This also proves the existence of a solution to~\eqref{eq:u}. The fact that $P^{-1}= u_o$ (equation~\eqref{char:P-u}) is then a direct consequence of \eqref{altdef:P}.
\end{proof}

\begin{remark}[Chemical interpretation of $u$ and $w$]
The proof of this lemma illustrates the chemical interpretation of $u$ and $w$. Both are defined in terms of a curve of stationary states $c_\e = \hat c[\overline c+\e e_i]$ generated by perturbing the system by adding small amounts of $X_i$:
\begin{enumerate}
\item $u$ can be interpreted as the {\boldmath derivative of the vector function $\e\mapsto \overline c_i\log c_\e $ at $\e=0$}, i.e. $\overline c_i$ times the \textbf{rate of change of the vector of chemical potentials of the species};
\item $w$ can be interpreted as the \textbf{(infinitesimal) stoichiometrically admissible perturbation} that connects the non-stationary point $\overline c+\e e_i$ with the stationary point $c_\e = \hat c[\overline c+\e e_i]$: $w =\overline c_i \lim_{\e\to0} \e^{-1}( \hat c[\overline c+\e e_i]-\overline c - \e e_i)$.
\end{enumerate}
\noindent
\end{remark}

In the following lemmas we characterize the \emph{maximal} inverse precision, where the maximum is taken over all $\overline c\in \R^\sS_+$, in terms of elementary vectors in $W$ and $W^\perp$. Lemmas~\ref{lem:o_u}, \ref{lem:u_o}, and~\ref{lem:u_w} together conclude the proof of Theorem~\ref{th:UpperBoundsInvP}.

\begin{lemma}\label{lem:o_u}
\begin{align*}
\sup&\left\{u_o:  u-e_i =  \diag\Bigl(\frac1{\overline{c}}\Bigr)w, ~u\in W^\perp, ~w\in W, ~\overline{c}\in \R_+^\sS\right\}\\
&\leq \max \{ u_o:  u\text{ elementary in }W^\perp\text{ with } u_i=1, \text{ or }u=0\} 
\end{align*}
\end{lemma}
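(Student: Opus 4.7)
The plan is to combine the orthogonality $u\cdot w=0$ (from $u\in W^\perp$ and $w\in W$) with the standard conformal decomposition of vectors in $W^\perp$ into elementary vectors: every $u\in W^\perp$ admits a representation $u=\sum_k u^{(k)}$ with each $u^{(k)}$ elementary in $W^\perp$ and $\sign u^{(k)}_s\in\{0,\sign u_s\}$ for all $s$.

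First I would dispose of $u=0$ (for which $u_o=0$ is bounded by the RHS via the \emph{or $u=0$} alternative) and, for nonzero feasible $u$, extract the key inequality $u_i\in(0,1]$. Writing $w_s=\overline c_s(u_s-\delta_{si})$ and expanding $0=u\cdot w$ gives $\overline c_i u_i(1-u_i)=\sum_{s\neq i}\overline c_s u_s^2$, a sum of non-negative terms; so $u_i\in[0,1]$, and $u_i=0$ would force $u=0$.

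The heart of the argument---the step I expect to be the main obstacle---is to show that every elementary summand $u^{(k)}$ in the conformal decomposition of $u$ also satisfies $u^{(k)}_i>0$. Conformality with $u$ (combined with $u_i>0$) already forces $u^{(k)}_i\geq 0$, so I just need to rule out $u^{(k)}_i=0$. My plan is to re-apply the orthogonality trick to $u^{(k)}$ with the \emph{same} feasibility datum $w$: since $u^{(k)}\in W^\perp$ we have $u^{(k)}\cdot w=0$, and under $u^{(k)}_i=0$ this collapses to $\sum_{s\neq i}\overline c_s u^{(k)}_s u_s=0$, a sum of terms all $\geq 0$ by conformality. Hence every term vanishes, so $u^{(k)}_s=0$ whenever $u_s\neq 0$; and $u^{(k)}_s=0$ whenever $u_s=0$ by conformality. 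Combined with $u^{(k)}_i=0$ this yields $u^{(k)}=0$, contradicting elementarity. This step is where one crucially uses the specific feasibility data $(w,\overline c)$, not just the abstract inclusion $u\in W^\perp$.

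Finally, set $\lambda_k:=u^{(k)}_i>0$ and $v^{(k)}:=u^{(k)}/\lambda_k$; each $v^{(k)}$ is elementary in $W^\perp$ with $v^{(k)}_i=1$, so $v^{(k)}_o\leq M$, where $M$ denotes the maximum on the RHS taken over such elementary vectors (finite, because we have just produced one). Summing, $u_o=\sum_k\lambda_k v^{(k)}_o\leq M\sum_k\lambda_k=Mu_i$. Splitting into the cases $M\geq 0$ and $M<0$ and using $u_i\in(0,1]$ then gives $u_o\leq\max(M,0)$, which is exactly the RHS.
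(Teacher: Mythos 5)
Your proposal is correct, but it follows a genuinely different route from the paper's. The paper fixes $w$, replaces the feasibility condition $u-e_i=\diag(1/\overline c)w$ by the sign constraints it implies (its condition \eqref{eq:lin}), and treats the resulting problem as a linear program: boundedness is shown by noting that an unbounded recession direction $v$ with $v_i=0$, $v_o>0$ would violate $v\perp w$; then an optimal solution of \emph{minimal support} is shown to be elementary by a perturbation argument, and finally its $i$-th coordinate is forced to equal $1$ by a rescaling argument. You instead keep the actual feasible $u$ and invoke the conformal decomposition of a vector of $W^\perp$ into sign-consistent elementary vectors (Rockafellar's theorem on elementary vectors of a subspace) --- a standard but nontrivial external fact that the paper deliberately avoids, essentially re-deriving the single elementary vector it needs via the minimal-support optimizer. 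The one step of yours that genuinely requires care is showing $u^{(k)}_i>0$ for each summand, and your argument there is sound: pairing $u^{(k)}$ against the \emph{same} $w$ and using conformality makes $\sum_{s\neq i}\overline c_s u^{(k)}_s u_s$ a vanishing sum of non-negative terms, which kills $u^{(k)}$ if $u^{(k)}_i=0$; this plays exactly the role that $v\perp w$ plays in the paper's boundedness step. The trade-off: your route is shorter and yields the slightly sharper intermediate estimate $u_o\leq M u_i$ with $u_i\in(0,1]$ (from which the claim follows by the case split on the sign of $M$, as you note), at the cost of importing the decomposition theorem; the paper's LP argument is self-contained linear algebra and also sets up the machinery (the set $\sS'$ and the bases of Lemma~\ref{lem:basic}) reused in Lemmas~\ref{lem:u_o} and~\ref{lem:u_w}.
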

\begin{proof} 
The condition 
\begin{equation}\label{eq:orth}u-e_i=\diag\Bigl(\frac1{\overline{c}}\Bigr)w\quad \text{for some }~w\in \R_+^\sS\end{equation}
implies for  $s\neq i$ that if $w_s>0$, then $u_s>0$; if $w_s<0$, then $u_s<0$; and if $w_s=0$, then $u_s=0$.
Moreover, if $u_i(u_i-1)>0$, then 
$$0<u_i\overline{c}_i(u_i-1)+\sum_{s\neq i} u_s\overline{c}_su_s=u^T\diag(\overline{c})(u-e_i)=u^Tw=0,$$
a contradiction. Hence $0\leq u_i\leq 1$.
Summarizing, we find that \eqref{eq:orth} implies
\begin{equation}
\label{eq:lin}
u_s\left\{
  \begin{array}{ll} 
    \in [0,1] &\text{if }s=i\\
    \geq 0&\text{if } w_s>0, s\neq i\\
    \leq 0&\text{if } w_s<0, s\neq i\\
    = 0&\text{if } w_s=0, s\neq i 
  \end{array}
\right.
\end{equation}
So for fixed $w\in W$, if $w_o=0$, then $u_o=0$ and the result is trivial. Otherwise the supremum
\begin{equation}\label{eq:fix_w}\sup\left\{u_o:  u-e_i=\diag\Bigl(\frac1{\overline{c}}\Bigr)w, ~u\in W^\perp, ~\overline{c}\in \R_+^\sS\right\}\end{equation}
is bounded from above by 
\begin{equation}\label{eq:fix_w2}\sup\{u_o: u\in W^\perp, u\text{ satisfies } \eqref{eq:lin}\}.\end{equation}

This is a linear optimization problem. To complete the proof of this lemma, we will argue that for each fixed $w\in W$, \eqref{eq:fix_w2}, and hence \eqref{eq:fix_w}, is bounded from above by $$\max \{ u_o:  u\text{ elementary in }W^\perp\text{ with }u_i=1,\text{ or }u=0\}.$$
If the supremum in~\eqref{eq:fix_w2} equals $+\infty$, then there exist $u$ and $v$ such that $u+\lambda v\in W^\perp$ satisfies \eqref{eq:lin} for all $\lambda>0$, with $v_o>0$ and $v_i=0$. But then $w^Tv=\sum_s w_sv_s\geq w_ov_o>0$, contradicting that $v\perp w$. So the value of \eqref{eq:fix_w2} is finite, and hence the optimum is attained. Let $u^*$ be an optimal solution such that $|\supp(u^*)|$ is as small as possible. 

We show that $u^*=0$ or $u^*$ is an elementary vector. If not, then $u^*\neq 0$ and there is a nonzero vector $v\in W^\perp$ so that $\supp(v)\subsetneqq \supp(u^*)$, by the definition of an elementary vector. We may assume that $v_o=0$; otherwise replace $v$ with $ u^*_o v-v_o u^*$. Then 
$$\max\{u_o: u=u^*+\epsilon v, \epsilon\in \R, u\text{ satisfies } \eqref{eq:lin}\}$$
is attained by an optimal solution $u^{**}$ which satisfies one of the inequalities in \eqref{eq:lin} with equality, such that $u^{**}_s=0$ where $u^*_s\neq 0$ for some $s$. Since $\supp(u^{**})\subseteq \supp(u^*)\cup\supp(v) \subseteq \supp(u^*)$ and $u^{**}_o=u^*_o+\epsilon v_o= u^*_o$, that would contradict the choice of $u^*$ as an optimal solution of \eqref{eq:fix_w2} with minimal support.

So $u^*$ is elementary, and it remains to show that $u^*_i=1$. If $u^*_i=0$, we have  $0=w^Tu^*=\sum_{s\neq i} w_su^*_s$, and hence $u^*=0$. Hence $0<u^*_i\leq 1$. If $u^*_i<1$, then for a sufficiently small $\epsilon>0$, the vector $(1+\epsilon)u^*$ is a feasible solution of \eqref{eq:fix_w2} with $((1+\epsilon)u^*)_o> u^*_o$, contradicting the optimality of $u^*$. Hence $u^*_i=1$, as required.
\end{proof}

We need the following, essentially combinatorial fact on elementary vectors. For any $ \sS'\subseteq \sS$ and $\sR' \subseteq \sR$, let $\sN[\sS', \sR']$ denote the submatrix of  $\mathcal N$ spanned by the rows and columns in $\sS'$ resp. $\sR'$.
\begin{lemma}\label{lem:basic}
Let $u^*$ be elementary in $W^\perp$. Then there exist a set $\sS'\subseteq \sS$, elementary vectors $\sm w^s\in W$ for $s\in \sS'$, and elementary vectors $\sm u^s\in W^\perp$ for $s\in \sS\setminus \sS'$, such that 
\begin{itemize}
\item $i\not\in \sS'$, and $\supp(u^*)\subseteq \sS'\cup\{i\}$;
\item $\supp(\sm w^s)\subseteq (\sS\setminus \sS')\cup \{s\}$ for each $s\in \sS'$, and $\sm w_s^s=1$; and 
\item $\supp(\sm u^s)\subseteq \sS'\cup \{s\}$ for each $s\in \sS\setminus \sS'$, and $\sm u_s^s=1$.
\end{itemize}
\end{lemma}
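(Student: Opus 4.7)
The plan is to recast the lemma in matroid-theoretic language and realize the required elementary vectors as the fundamental circuit and cocircuit vectors associated with a suitably chosen basis. Let $M = M(W)$ denote the matroid on $\sS$ whose circuits are the minimal supports of nonzero vectors in $W$; its dual $M^* = M(W^\perp)$ has as its circuits the minimal supports of nonzero vectors in $W^\perp$. In this language, $\supp(u^*)$ is either empty (when $u^*=0$) or a circuit of $M^*$, equivalently a cocircuit of $M$.

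My first step will be to locate a basis $B$ of $M$ with $i\in B$ and $B\cap\supp(u^*)\subseteq\{i\}$, and then take $\sS' := \sS\setminus B$. If $u^*=0$, any basis containing $i$ will do. Otherwise, I use the fact that the complement $H := \sS\setminus\supp(u^*)$ of a cocircuit is a hyperplane of $M$, so $\rank(H) = \rank(M)-1$. In the relevant applications one has $u^*_i=1$, so $i\in \supp(u^*)$, and adding $i$ (an element not in the flat $H$) raises the rank by exactly one; hence $H\cup\{i\}$ has rank $\rank(M)$ and contains a basis $B$ of $M$. Since $H$ itself is not spanning, every basis inside $H\cup\{i\}$ is forced to contain $i$. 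By construction $B\cap\supp(u^*) = \{i\}$, so $i\notin\sS'$ and $\supp(u^*)\subseteq\sS'\cup\{i\}$.

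Once $B$ is fixed, I produce the remaining vectors as the fundamental vectors of the linearly represented matroid $M$. Because $B$ is a basis of $M$, the projection $W\to\R^{\sS\setminus B}$ is a linear isomorphism; so for each $s\in \sS\setminus B = \sS'$ there exists a unique $\sm w^s\in W$ with $\sm w^s_s = 1$ and $\sm w^s_t = 0$ for every $t\in \sS'\setminus\{s\}$, and its support is precisely the fundamental circuit of $s$ with respect to $B$, contained in $B\cup\{s\} = (\sS\setminus\sS')\cup\{s\}$. Dually, because $\sS\setminus B = \sS'$ is a basis of $M^*$, the projection $W^\perp\to\R^B$ is an isomorphism, and for each $s\in B = \sS\setminus\sS'$ there is a unique elementary $\sm u^s\in W^\perp$ with $\sm u^s_s = 1$ and support in $\sS'\cup\{s\}$, the fundamental cocircuit of $s$.

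The main obstacle in this plan is the rank computation showing that $H\cup\{i\}$ spans $M$; this rests on the standard matroid facts that cocircuits of $M$ are precisely the complements of its hyperplanes, and that adding to a flat a single element outside it raises the rank by exactly one. Once that is in hand, the existence and uniqueness of the elementary vectors $\sm w^s$ and $\sm u^s$ follow directly from the linear-algebraic realizations $W\to\R^{\sS'}$ and $W^\perp\to\R^B$ being isomorphisms at a basis, and no further chemistry or detailed-balance assumption is needed.
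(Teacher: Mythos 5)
Your proof is correct and is essentially the paper's own argument: the paper extends the independent set $\supp(u^*)\setminus\{i\}$ (independent in $M(\sN^T)$ precisely because $u^*$ is elementary and $u^*_i\neq 0$) to a basis $\sS'$ and reads off the fundamental circuit/cocircuit vectors from a reduced column form of $\sN$ --- exactly your $\sm w^s$ and $\sm u^s$, with your basis $B$ being the complement $\sS\setminus\sS'$ in the dual matroid, and Section~\ref{sec:matroid} of the paper makes this matroid reading explicit via Lemma~\ref{lem:matroid_basic}. The one caveat, which you correctly flag and which the paper's proof shares, is that the argument requires $i\in\supp(u^*)$; this holds in every application of the lemma since it is only invoked for elementary $u^*$ with $u^*_i=1$.
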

\proof Consider the set $\mathcal U:=\supp(u^*)\setminus \{i\}$. The rows of $\sN[\mathcal U, \sR]$ are independent, for if there were  a linear dependency among these rows, then there would exist a nonzero vector $u'\in  \ker(\sN^T)=W^\perp$ with $\supp(u')\subseteq \mathcal U$, contradicting our assumption that $u^*$ is elementary vector of $W^\perp$. 

Pick any maximal set $\sS'\subseteq \sS$ so that $\mathcal U\subseteq  \sS'$ and so that the rows of $\sN[\sS', \sR]$ are still independent. Then the rows of $\sN[\sS', \sR]$ form a basis of the rowspace of $\sN$. Let $\sR'\subseteq \sR$ be such that the columns of $\sN[\sS, \sR']$ are a basis of $W$. Then by applying column operations to $\sN[\sS, \sR']$ (as in standard Gaussian elimination) we may obtain a matrix $\sN' \sim \sN[\sS, \sR']$ of the form
$$\bordermatrix{~ & \sR' \cr \sS' & I \cr \sS'' & X}.$$
That is, $\sN'[\sS',  \sR']=I$, and as column operations do not change the column space, $\range(\sN')=\range(\sN)=W$. For each $s\in \sS$, let $\sm w_s$ denote the unique column of $\sN'$ with a 1 in the $s$-th row. Then $\sm w_s^s=1$ and $\supp(\sm w^s)\subseteq (\sS\setminus \sS')\cup \{s\}$ by construction, and moreover $\sm w^s$ is elementary: any $w\in W$ is a linear combination of the columns of $\sN'$, so that if $\supp(w)\subseteq \supp(\sm w^s)$, then $w$ must be a scalar multiple of $\sm w^s$.

To obtain the vectors $\sm u^s$, we construct the matrix $\sN''$ as
$$\bordermatrix{~ & ~ \cr \sS' & -X^T \cr \sS'' & I },$$
using the matrix $X:=\sN'[\sS'', \sR']$. Since $(\sN')^T\sN''=0$ and $\rank(\sN')+\rank(\sN'')=|\sS'|+|\sS''|=|\sS|$, the columns of $\sN''$ span $W^\perp$. 
For   each $s\in \sS\setminus \sS'$, let $\sm u^s$ denote the unique column of $\sN''$ with a 1 in the $s$-th row. Then  $\sm u_s^s=1$, 
$\supp(\sm u^s)\subseteq \sS'\cup \{s\}$, and $\sm u^s$ is elementary as before.
\endproof
We comment on the relation with matroid theory in Section \ref{sec:matroid}.

\begin{lemma}
\label{lem:u_o}
\begin{align*}
\sup\left\{u_o:  u-e_i =  \diag\Bigl(\frac1{\overline{c}}\Bigr)w, ~u\in W^\perp, ~w\in W, ~\overline{c}\in \R_+^\sS\right\}\\
 \geq \max \{ u_o:  u\text{ elementary in }W^\perp\text{ with }u_i=1,\text{ or }u=0\}
\end{align*}
\end{lemma}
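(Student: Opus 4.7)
My plan is, for each vector $u^*$ in the set appearing on the right-hand side, to exhibit a sequence of parameters $\overline c^{(n)} \in \R_+^\sS$ whose associated feasible $u^{(n)}$ converges to $u^*$; this will show the supremum is at least $u^*_o$, and maximizing over such $u^*$ gives the claimed inequality. The key geometric fact I would exploit is that, for fixed $\overline c$, the unique feasible $u$ (in the sense of Lemma~\ref{lem:P_o}) is the orthogonal projection of $e_i$ onto $W^\perp$ with respect to the weighted inner product $\langle x,y\rangle_{\overline c} := \sum_s \overline c_s x_s y_s$: indeed the $\langle\cdot,\cdot\rangle_{\overline c}$-orthogonal complement of $W^\perp$ is precisely $\diag(1/\overline c) W$, so the constraint $u - e_i \in \diag(1/\overline c) W$ is just the orthogonality condition characterizing this projection. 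Equivalently, $u$ minimizes $\sum_s \overline c_s (v_s - \delta_{s,i})^2$ over $v \in W^\perp$, which lets me steer $u$ by making $\overline c_s$ large on coordinates I want to pin to $\delta_{s,i}$ and small on coordinates I want to leave free.

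For the trivial option $u^* = 0$, I would take $\overline c_s = M$ for $s \neq i$, $\overline c_i = 1$, and send $M \to \infty$; the bound $(u^M_i-1)^2 + M\sum_{s\neq i}(u^M_s)^2\leq 1$ obtained by comparing to $v=0$ forces $u_s \to 0$ for every $s \neq i$, so $u_o \to 0$. For elementary $u^*$ with $u^*_i = 1$, I would apply Lemma~\ref{lem:basic} to obtain a partition $\sS = \sS' \cup \sS''$ with $i \in \sS''$, $\supp(u^*) \subseteq \sS' \cup \{i\}$, and a basis $\{\sm u^s\}_{s \in \sS''}$ of $W^\perp$ satisfying the triangular identity $\sm u^s_t = \delta_{s,t}$ for $s,t \in \sS''$. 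Expanding $u^* = \sum_{s \in \sS''} u^*_s \sm u^s$ and using the support condition, only the $s=i$ term survives, so $u^* = \sm u^i$. I then take $\overline c^\epsilon_s = \epsilon$ for $s \in \sS'$ and $\overline c^\epsilon_s = 1$ for $s \in \sS''$. As $\epsilon \to 0$ the $\sS'$-terms of the weighted quadratic vanish, and the limiting problem $\min_{v \in W^\perp} \sum_{s \in \sS''}(v_s - \delta_{s,i})^2$ is uniquely solved by the vector in the basis $\{\sm u^s\}$ whose $\sS''$-coordinates are $\delta_{s,i}$, namely $\sm u^i$. So the projection $u^{\overline c^\epsilon}$ converges to $u^* = \sm u^i$ and $u_o \to u^*_o$.

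The main technical step is the rigorous passage to the limit in this weighted projection, which I would handle either via the explicit Schur-complement formula $u|_{\sS''} = \overline c_i \bigl(\diag(\overline c|_{\sS''}) + X \diag(\overline c|_{\sS'}) X^T\bigr)^{-1} e_i|_{\sS''}$ (with $X = \sN'[\sS'',\sR']$ the matrix from the proof of Lemma~\ref{lem:basic}), in which the perturbation matrix $X\diag(\overline c|_{\sS'})X^T$ tends to zero as $\epsilon \to 0$, or alternatively by a continuity-of-minimizers argument for the quadratic functional. Beyond that step the proof is combinatorial, and the elementary-vector hypothesis on $u^*$ enters only to identify $u^*$ with the basis element $\sm u^i$.
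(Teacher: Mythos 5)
Your proof is correct, and it takes a genuinely different route from the paper's. You recast the feasibility condition of Lemma~\ref{lem:P_o} variationally: for fixed $\overline c$ the unique feasible $u$ is the $\langle\cdot,\cdot\rangle_{\overline c}$-orthogonal projection of $e_i$ onto $W^\perp$, i.e.\ the minimizer of $\sum_s \overline c_s(v_s-\delta_{s,i})^2$ over $v\in W^\perp$, and you then realize the target elementary vector $u^*=\sm u^i$ as a limit of such projections along the explicit family $\overline c^\epsilon = \epsilon$ on $\sS'$, $1$ on $\sS''$. The paper instead rewrites feasibility as the sign-pattern condition $\sign(u_s)=\sign(w_s)$ for $s\neq i$, $\sign(u_i-1)=\sign(w_i)$, and builds a feasible pair $(u,w)$ by an iterative support-repair procedure, adding small multiples of the vectors $\sm u^s$ and $\sm w^s$ from Lemma~\ref{lem:basic} with a nested hierarchy of parameters $\delta_s$. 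Both arguments lean on Lemma~\ref{lem:basic}, but yours uses only the triangular basis $\{\sm u^s\}_{s\in\sS''}$ of $W^\perp$ (to get $u^*=\sm u^i$ and to recover the full vector from its $\sS''$-coordinates), whereas the paper also needs the dual vectors $\sm w^s$. Your limit passage is in fact lighter than you suggest: the comparison $Q_\epsilon(u^\epsilon)\le Q_\epsilon(\sm u^i)=O(\epsilon)$ forces $u^\epsilon|_{\sS''}\to e_i|_{\sS''}$, and since the expansion $v=\sum_{s\in\sS''}v_s\,\sm u^s$ reconstructs any $v\in W^\perp$ continuously from its $\sS''$-coordinates, $u^\epsilon\to\sm u^i$ follows without the Schur complement or a general continuity-of-minimizers argument; uniqueness of the limiting minimizer is likewise not needed. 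Your handling of the $u^*=0$ alternative (weights $M\to\infty$ off the input coordinate) is also cleaner than the paper's brief scaling remark. What the paper's construction buys in exchange is an explicit feasible $w$ and the transparent sign/support structure that feeds the worked Example~E and the matroid discussion of Section~\ref{sec:matroid}; what yours buys is a shorter, more conceptual argument and a simple one-parameter family of $\overline c$'s (vanishing exactly on $\sS'$) that makes the limiting behaviour $\overline c\to$ (small on $\sS'$, order one elsewhere) immediate.
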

\begin{proof}
First, note that the supremum is necessarily nonnegative, since if $(u, w)$ is feasible in combination with some $\overline{c}\in \R_+^\sS$, then so is $(\alpha u, w)$ for any $0<\alpha <1$. So it remains to show that if  $u^*\in W^\perp$ is an elementary vector with $u^*_i=1$, then the supremum is at least $u^*_o$. As we have already established that the supremum is nonnegative, we may assume $u^*_o>0$.

To prove that the supremum is at least $u^*_o$, it suffices to show that for any $\epsilon>0$, there are $u\in W^\perp, w\in W$ such that $\|u-u^*\|<\epsilon$ and
$$u-e_i =  \diag\Bigl(\frac1{\overline{c}}\Bigr)w, ~u\in W^\perp, ~w\in W, \text{ for some }\overline{c}\in \R_+^\sS.$$
This condition is equivalent to 
\begin{equation}
\label{eq:signs}
\sign(u_s)=\sign(w_s)\text{ for all } s\neq i,\ 
\sign(u_i-1)=\sign(w_i), ~u\in W^\perp, ~w\in W.\end{equation}
We will construct such vectors $u, w$, using a set $\sS'$ and vectors $\sm u^s$ and $\sm w^s$ as in Lemma \ref{lem:basic}. 
Throughout, we will preserve that
\begin{align}\label{eq:weak_signs} \sign(u_s)=\sign(w_s) &\text{ for all } s\in \sS'\cap\supp(w)\\
\label{eq:weak_signs2} \sign(u_s)=\sign(w_s) &\text{ for all } s\in (\sS\setminus \sS'\setminus\{i\})\cap\supp(u)\\
\label{eq:weak_signs3} \sign(u_i-1)=\sign(w_i),& ~u\in W^\perp, ~w\in W.\end{align}
In each step, we increase the cardinality of $|\supp(u)\cap \supp(w)|$, until we attain $\supp(u)=\supp(w)$. Then, we necessarily have \eqref{eq:signs}.

We initialise by setting 
\begin{equation}
\label{alg:lem-u_o-initialization}
u\leftarrow (1-\delta_i)u^*\text{ for a small }\delta_i>0, \text{ and }w\leftarrow u^*_o\sm w^o.
\end{equation}
To see that \eqref{eq:weak_signs} holds for this initial $u,w$, note that $\sS'\cap \supp(w)=\sS'\cap \supp(\sm w^o)=\{o\}$,  so that we need only verify that $\sign(u_o)=\sign(w_o)$. Since $\sm w^o_o=1$, we have $\sign(w_o)=\sign(u^*_o\sm w^o_o)= \sign(u^*_o)=\sign(u_o)$, as required.
As $\supp(u)=\supp(u^*)\subseteq \sS'\cup\{i\}$, condition \eqref{eq:weak_signs2} is vacuously satisfied by $u,w$.
It remains to show \eqref{eq:weak_signs3}, that $\sign(u_i-1)=\sign(w_i)$. We have $\supp(u^*)\cap \supp(\sm w^o)=\{i,o\}$, $u^*\perp \sm w^o$, and hence $u^*_o\sm w^o_o+u^*_i\sm w^o_i=0$; moreover $u^*_i=1$, $u^*_o>0$, and $\sm w^o_o=1$, so that $\sm w^o_i<0$. So $w_i=u^*_o\sm w^o_i<0$, and as $u^*_i=1$, we have $u_i-1=-\delta<0$. Hence $\sign(u_i-1)=\sign(w_i)$, as required.

In the general step, if there is an $s\in \supp(w)\setminus\supp(u)$, we put
$$u\leftarrow u+\delta_s \sm u^s$$
where $\delta_s\in \R$ is chosen such that $\sign(\delta_s)=\sign(w_s)$ and with $|\delta_s|$ sufficiently small to ensure that for each $s'\not = s$ with $u_{s'}\neq 0$, the sign of $u_{s'}$ is unaltered. Then after this step, we have $s\in \supp(w)\cap\supp(u)$, and  \eqref{eq:weak_signs},  \eqref{eq:weak_signs2}, and \eqref{eq:weak_signs3} are preserved. If there is an $s\in \supp(u)\setminus\supp(w)$, we similarly put
\[
w\leftarrow w+\delta_s \sm w^s
\]
with $\sign(\delta_s)=\sign(u_s)$ and $|\delta_s|$ sufficiently small.

Since  each $\delta_s$ may be chosen arbitrarily close to $0$, we can ensure that in the final stage
\[
\|u-u^*\|= \biggl\|-\delta_i u^*  + \sum_{s\in \sS\setminus\sS'\setminus\{i\}} \delta_s \sm u^s \biggr\|<\epsilon.
\]
\end{proof}

%%% old example
\ignore{
\begin{example}
\noindent
\red{Rudi, do you feel like improving this example, as we discussed?}
The proof of Lemma~\ref{lem:u_o} contains an algorithm that takes an optimal $u$ in~\eqref{equiv:maxInvP-formulations-Wperp} and constructs an admissible $\overline c$, such that the inverse Precision at $\overline c$ is arbitrarily close to $\maxInvP$. We illustrate this algorithm with \textbf{Example A}.

We saw~\vpageref{ex:ExA-cont-elementary-vectors} that $W = (1,1,1)^\perp$ and $W^\perp = \Span\{(1,1,1)\}$, the input and output indices are $i=1$ and $o=2$, and the optimal $u^*$ is equal to $(1,1,1)$. In this case the set $\sS'$ and the elementary vectors $\sm u^s$ and $\sm w^s$ as in Lemma~\ref{lem:basic} are unique:
\begin{itemize}
\item $\sS' = \{2,3\}$;
\item $\sm u^1 = (1,1,1)$;
\item $\sm w^2 = (-1,1,0)$ and $\sm w^3 = (-1,0,1)$.
\end{itemize}
The procedure then runs as follows:
\begin{enumerate}
\item Initialization (see~\eqref{alg:lem-u_o-initialization}): set $u = (1-\delta_1)u^* = (1-\delta_1) (1,1,1)$ and $w =u^*_2\sm w^2 = (-1,1,0)$ for some $0<\delta_1\ll1$ that we will chose later.
\item Iteration: $\supp w\setminus \supp u = \emptyset$, but $\supp u \setminus \supp w = \{3\}$. We therefore replace $w$ by $w+\delta_3\sm w^3 = (-1-\delta_3,1,\delta_3)$ with $\sign(\delta_3)=\sign(u_3)=+$. (Note that in this case there is no requirement on $\delta_3$ other than positivity, but in more general situations there would be a smallness requirement, to prevent the perturbation from changing the signs of other non-zero components).
\end{enumerate}
We have now a pair $(u,w)\in W^\perp\times W$ such that $u-e_i = \bigl( -\delta_1 , 1-\delta_1, 1-\delta_1\bigr) \text{ and }
  w= \bigl(-1-\delta_3,1,\delta_3\bigr)$ have the same signs in each component. Therefore $\overline c$ can be defined through 
\[
u - e_1 = \diag \Bigl(\frac1{ \overline c} \Bigr)w.
\]
By Lemma~\ref{lem:P_o}, the inverse Precision of the system with parameter $\overline c$ is then equal to $u_2 = 1-\delta_1$. 
In the limit $\delta_1\to0$ this converges to $\maxInvP = 1$.
\end{example}
}

\begin{lemma}\label{lem:u_w}
\begin{align*}
 \max \{ u_o:  u\text{ elementary in }& W^\perp,\  u_i=1\} =\max \{ w_i :  w\text{ elementary in }W,\ w_o=-1\} 
\end{align*}
\end{lemma}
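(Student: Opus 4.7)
The plan is to deduce both inequalities from Lemma~\ref{lem:basic}, exploiting the fact that that lemma treats $W$ and $W^\perp$ symmetrically via its simultaneous construction of bases of the two spaces (the matrices $\sN'$ and $\sN''$ in its proof). In each direction the essential new content reduces to a two-coordinate orthogonality identity. Throughout I read the maxima as also admitting the fallback value $0$ (the $u=0$ and $w=0$ options of Theorem~\ref{th:UpperBoundsInvP}), which is needed for the identity to hold in general, as Example~\ref{ex:negP} already illustrates.

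For the inequality $\leq$, it suffices to show that whenever the left-hand maximum is positive and attained at an elementary $u^*\in W^\perp$ with $u^*_i=1$ and $u^*_o>0$, one can produce an elementary $w^*\in W$ with $w^*_o=-1$ and $w^*_i=u^*_o$; a non-positive left-hand maximum is trivially matched by $w=0$ on the right. Since $o\in\supp(u^*)\setminus\{i\}\subseteq\sS'$, Lemma~\ref{lem:basic} provides an elementary vector $\sm w^o\in W$ with $\sm w^o_o=1$ and $\supp(\sm w^o)\subseteq(\sS\setminus\sS')\cup\{o\}$. Combining the two support constraints with $i\notin\sS'$, $o\in\sS'$, and $i\neq o$ forces $\supp(u^*)\cap\supp(\sm w^o)\subseteq\{i,o\}$, so the relation $u^*\perp\sm w^o$ collapses to
\begin{equation*}
0=u^*\cdot\sm w^o=u^*_i\sm w^o_i+u^*_o\sm w^o_o=\sm w^o_i+u^*_o.
\end{equation*}
Setting $w^*:=-\sm w^o$ yields the desired elementary vector.

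For the inequality $\geq$, I reapply Lemma~\ref{lem:basic} to the dual pair $(W^\perp,(W^\perp)^\perp)=(W^\perp,W)$, taking $-w^*\in W=(W^\perp)^\perp$ as the starting elementary vector and interchanging the roles of $i$ and $o$. This dual application is legitimate because the proof of Lemma~\ref{lem:basic} constructs the bases of $W$ and of $W^\perp$ (via $\sN'$ and $\sN''$) in a manifestly symmetric fashion. If $w^*_i>0$, the analogous two-term orthogonality identity on $\{i,o\}$ produces an elementary vector $u^*\in W^\perp$ with $u^*_i=1$ and $u^*_o=w^*_i$; a non-positive right-hand maximum is again covered by $u=0$.

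I expect the only real obstacle to be notational rather than mathematical: naming the dual quantities in the second application of Lemma~\ref{lem:basic} cleanly, without collision with the first. Once the correct elementary vector has been isolated on each side, the orthogonality-and-support calculation is essentially immediate, because all the structural work has already been absorbed into Lemma~\ref{lem:basic}.
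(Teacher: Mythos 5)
Your proposal is correct and follows essentially the same route as the paper: both directions use Lemma~\ref{lem:basic} to produce the elementary vector $\sm w^o$ with support meeting $\supp(u^*)$ only in $\{i,o\}$, reduce orthogonality to the two-term identity $w_i - u_o = 0$, and obtain the reverse inequality by interchanging $W$ with $W^\perp$ and $i$ with $o$. Your explicit handling of the zero fallback and of the symmetry of Lemma~\ref{lem:basic} under dualization is slightly more careful than the paper's, but not a different argument.
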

\begin{proof} We first prove `$\leq$'. Let $u$ attain the maximum on the left. By Lemma \ref{lem:basic}, there is a set $\sS'\subseteq \sS$ and vectors $\sm w^s\in W$ so that $i\not\in \sS$, and $\supp(u)\subseteq \sS'\cup\{i\}$, and $\supp(\sm w^s)\subseteq (\sS\setminus \sS')\cup \{s\}$ for each $s\in \sS'$, and $\sm w_s^s=1$. Pick $w:=-\sm w^o$. Then $w_o=-1$, and $\supp(u)\cap\supp(w)=\{i,o\}$. As $w\perp u$ , we have
$$w_i-u_o=w_iu_i+w_ou_o=\sum_s w_su_s=0$$
As $w$ is elementary, $w$ is a feasible solution of the maximum on the right. Hence `$\leq $'.
Interchanging $W$ with $W^\perp$, and $i$ with $o$, we obtain the converse inequality `$\geq $'.
\end{proof}

This completes the proof of Theorem \ref{th:UpperBoundsInvP}.

\begin{example}
\refstepcounter{example}
\label{ex:Rudi}
\noindent
\textbf{Example \ref{ex:Rudi}.}  We illustrate the proof by considering a system with 6 species $\sS=\{X_1,\ldots, X_6\}$ and reactions
$$X_2 + X_3 \leftrightharpoons X_4, \qquad X_2 + 2X_5 \leftrightharpoons 2X_3 + X_6,\qquad 2X_1+X_2\leftrightharpoons X_6.$$
\ignore{Then the stoichiometric matrix is 
$$\sN:= \left(\begin{array}{rrr}
0 & 0 & 2 \\
1 & 1 & 1 \\
1 & -2 & 0 \\
-1 & 0 & 0 \\
0 & 2 & 0 \\
0 & -1 & -1
\end{array}\right) 
$$}
The stoichiometric space $W$ and its orthogonal complement $W^\perp$ contain the following elementary vectors:
$$
\begin{array}{r|rrrrrr} 
W& X_1&X_2&X_3&X_4&X_5&X_6\\ 
\hline
w^1&0&1&1&-1&0&0\\
w^2&0&1&-2&0&2&-1\\
w^3&2&1&0&0&0&-1\\
w^4&3&0&0&1&-1&-1\\
w^5&0&3&0&-2&2&-1\\
w^6&0&0&3&-1&-2&1\\
w^7&1&0&1&0&-1&0\\
w^8&2&0&-1&1&0&-1\\
w^9&1&-1&0&1&-1&0
\end{array}\qquad
\begin{array}{r|rrrrrr} 
W^\perp &X_1&X_2&X_3&X_4&X_5&X_6\\ 
\hline
u^1&1&-2&2&0&3&0\\
u^2&1&0&0&0&1&2\\
u^3&1&-2&0&-2&1&0\\
u^4&1&-2&-1&-3&0&0\\
u^5&1&1&-1&0&0&3\\
u^6&0&1&0&1&0&1\\
u^7&0&1&-1&0&-1&1\\
u^8&1&0&-1&-1&0&2\\
u^9&0&0&1&1&1&0\\
\end{array}
$$
These lists are complete, that is, each elementary vector of $W$ (resp. $W^\perp$) is obtained by scaling one of the vectors $w^k$ (resp. $u^k$). With input $i=X_1$ and output $o=X_5$, inspection of both tables reveals that the maxima
\begin{equation}
\label{exE:maxismax}
\max \{ u_o:  u\text{ elementary in } W^\perp,\  u_i=1\}=3=\max \{ w_i :  w\text{ elementary in }W,\ w_o=-1\}
\end{equation}
are attained by the elementary vectors $u^1\in W^\perp$ and $w^4\in W$, respectively. 

Using the algorithm of Lemma~\ref{lem:u_o}, we construct vectors $u\in W^\perp$, $w\in W$, $\overline{c}$ which are feasible in
$$\sup\left\{u_o:  u-e_i =  \diag\Bigl(\frac1{\overline{c}}\Bigr)w, ~u\in W^\perp, ~w\in W, ~\overline{c}\in \R_+^\sS\right\}$$
and such that $u_o$ is arbitrarily close to the maximum $u^1_o=3$. 

The vector $u^1$ has $\supp(u^1)=\{X_1, X_2, X_3, X_5\}$, and we use $\sS'=\{X_1, X_2, X_3\}$ in the algorithm. 
 In the initial step, we put $u\leftarrow (1-\delta_5)u^1, w\leftarrow -3w^4$, where $0<\delta_5\ll 1$. Then
$\supp(u)=\{X_1, X_2, X_3, X_5\}$, $\supp(w)=\{X_1, X_4, X_5, X_6\}$, and we have
 $(u_{X_1}-1)w_{X_1}>0$, and $u_{X_5}w_{X_5}>0$. 
To repair that $u_{X_2}<0$ and $w_{X_2}=0$, we use the elementary vector $w^5\in W$ with $\supp(w^5)\subseteq \{X_2\}\cup \sS'$. We have $w^5_{X_2}=3>0$, so we put $w\leftarrow w-\delta_2 w^5$ where $0<\delta_2\ll\delta_5$.  Then, we consider that $u_{X_4}=0$ and $w_{X_4}<0$, and add a small multiple of $u^4$ to compensate: $u\leftarrow u+\delta_4u^4$ with $0<\delta_4\ll\delta_2$. Next, we have $u_{X_3}>0$ and $w_{X_3}=0$, and so we add a small multiple of $w^6: w\leftarrow w+\delta_3w^6$ where $0<\delta_3\ll\delta_4$. Finally, we repair that $u_{X_6}=0$ whereas $w_{X_6}>0$, and put $u\leftarrow u+\delta_6u^5$ with $0<\delta_6\ll\delta_3$. 

We end up with 
$$u=(1-\delta_5)u^1+\delta_4u^4+\delta_6u^5\text{ and } w=-3w^4-\delta_2 w^5+\delta_3w^6$$
where $0<\delta_6\ll\delta_3\ll\delta_4\ll\delta_2\ll\delta_5\ll 1$. Constructing $\overline{c}$ by setting $\overline{c}_s=w_s/u_s$ for $s\neq i$ and $\overline{c}_i=w_i/(u_i-1)$, we obtain the feasible triple $w,u,\overline{c}$ as follows:
$$\begin{array}{rcccccc} 
 &X_1&X_2&X_3&X_4&X_5&X_6\\ 
\hline
w= & -9 & -3\delta_2 & 3\delta_3 &-3+2\delta_2-\delta_3&3-2\delta_2-2\delta_3 &3+\delta_2+\delta_3\\
u= & \delta'_5+\delta_4+\delta_6 & -2\delta'_5-2\delta_4+\delta_6 &2\delta'_5-\delta_4 -\delta_6&-3\delta_4 &3\delta'_5 &3\delta_6\\
\overline{c}\approx& 9\delta_5^{-1} & \frac{3}{2} \delta_2 & \frac{3}{2}\delta_3&\delta_4^{-1} & 1&\delta_6^{-1}\\
\end{array}
$$
Here we abbreviated $\delta'_5:=(1-\delta_5)$, and the approximation of $\overline{c}$ is based on the assumed relative magnitudes of the $\delta_s$. The objective value of $u_0=3-3\delta_5$ tends to the maximum 3 as $\delta_5\downarrow 0$, and at the same time $\overline{c}\rightarrow (\infty,0,0,\infty, 1, \infty)$.
 
One way the construction of $\overline c$ above can be interpreted is as follows. The optimum in~\eqref{exE:maxismax} is achieved in $u^1$ and $w^4$. Focusing on the $w$-side of this characterization, first note that each of the elementary vectors $w^i$ is uniquely characterized by its support,  up to multiplication by scalars, by the very definition of an elementary vector. Therefore the optimal vector $w^4$ is characterized by its zeros for coordinates $X_2$ and $X_3$. One can now force the system to  follow~$w^4$ by choosing~$\overline c$ such that $\overline c_2$ and $\overline c_3$ are much smaller than the other coordinates. Although in a \emph{relative} sense $X_2$ and $X_3$ participate in the reactions---as illustrated by the values in the table above, which give $u_2\approx -2$ and $u_3\approx2$---because of the low background concentrations they play no role in terms of \emph{absolute} concentrations (as illustrated by the low values of $w$). The end result is that the system becomes similar to a single equation with stoichiometry $w^4$, with small perturbations of other reactions. The construction above makes this statement concrete. 
\end{example}

\ignore{
(It would be nice if there were a simple, direct argument for the following Lemma, but right now I do not see one:
\begin{lemma} Let $w\in W$ be an elementary vector such that $i, o\in\supp(w)$. Then for any $\epsilon>0$, we have $P^{-1}+\epsilon\geq -w_i/w_o$, for some $\overline{c}\in \R^\sS_+$ and  $\gamma\in\R^\sR_+$.\end{lemma}
This Lemma has an obvious interpretation (`any elementary reaction can be induced by some choice of $\overline{c}, \gamma$') which motivates the concept of  an `elementary $w\in W$'. Stating this Lemma before Theorem \ref{th:UpperBoundsInvP} would make a better introduction to the Theorem.
)

\begin{remark}
 If $w$ is elementary in $W$, then necessarily $\dim\{w'\in W: \supp(w')=\supp(w)\}=1$. Hence for each subset $\sS'\subseteq \sS$, there can be at most one vector $w$ such that $w$ is elementary, $w_o=-1$, and $\supp(w)=\sS'$. The set $\{ w_i :  w\text{ elementary in }W, w_o=-1\}$ is thus finite, and computing $\max\{ w_i :  w\text{ elementary in }W, w_o=-1\}$ with reasonable efficiency becomes a combinatorial problem. We will comment on methods for computing this bound and the connections of this issue with matroid theory in section \ref{sec:matroid}.
\end{remark} 
}

\section{Results: Properties of the Sensitivity}
\label{sec:sensitivity}

We now turn to the Sensitivity. In contrast to the Precision, the Sensitivity is a dynamic property, that depends not only on the stationary state $\overline c$ but also on the dynamic rates $k$. 
Our aim is, as before for the Precision, to find estimates from above and below on the Sensitivity that depend on the stoichiometry but not on the parameters $\overline c$ and $k$.

\subsection{Upper bounds}

Our first result gives a very general upper bound on Sensitivity for all mass-action, detailed-balance systems.

\begin{theorem}[General upper bound on Sensitivity]
\label{th:upper-bound-sensitivity}
Given a kinetic system $(\sS,\sR,\sN,(\overline c,k))$, we have the Sensitivity bound
\[
S \leq \sqrt{\frac{\overline c_i}{\overline c_o}}.
\]
\end{theorem}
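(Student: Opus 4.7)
\medskip
\noindent\textbf{Proof plan.} My plan is to use the alternative formulation $S=\sup_{t\geq 0}(e^{tA})_{oi}$ from \eqref{eq:alt-def-precision-and-sensitivity} together with the special structure of $A$. Writing $D=\diag(\overline c)$ and $K=\diag(k)$, the matrix in~\eqref{eq:ode-linearized} takes the factored form
\[
A=-D^{-1}\sN K\sN^T,
\]
so that $DA=-\sN K\sN^T$ is symmetric (and negative semidefinite, since $K$ has positive diagonal entries). This observation is the backbone of the argument.

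The key step is to symmetrize $A$ by conjugation. Setting
\[
B:=D^{1/2}AD^{-1/2}=-D^{-1/2}\sN K\sN^T D^{-1/2},
\]
one sees immediately that $B$ is symmetric and negative semidefinite. Consequently $e^{tB}$ is symmetric positive semidefinite with spectral norm $\|e^{tB}\|\leq 1$ for all $t\geq 0$; in particular every entry satisfies $|(e^{tB})_{oi}|=|e_o^T e^{tB} e_i|\leq \|e^{tB}\|\leq 1$.

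Transferring back via $e^{tA}=D^{-1/2}e^{tB}D^{1/2}$ gives
\[
(e^{tA})_{oi}=\sqrt{\frac{\overline c_i}{\overline c_o}}\,(e^{tB})_{oi},
\]
and taking the supremum over $t\geq 0$ yields the claimed bound $S\leq\sqrt{\overline c_i/\overline c_o}$. The only mildly subtle point is checking that $DA$ really is symmetric, which follows directly from the factorization above; there is no genuine obstacle. The bound is essentially a reversibility (detailed balance) statement: the linearization is self-adjoint in the $L^2(\overline c)$ inner product, which prevents any transient amplification beyond the geometric factor $\sqrt{\overline c_i/\overline c_o}$.
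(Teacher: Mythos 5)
Your proof is correct and is essentially identical to the paper's argument: the symmetrization $B=D^{1/2}AD^{-1/2}$ is exactly the paper's change of variables $y_s=u_s\sqrt{\overline c_s}$ leading to the symmetric negative semidefinite matrix $A_y$, and both arguments then bound the entries of the matrix exponential by the spectral norm $\|e^{tB}\|\leq 1$ before transferring back with the factor $\sqrt{\overline c_i/\overline c_o}$. No gaps; the only cosmetic difference is that the paper carries out the norm bound via explicit orthogonal diagonalization rather than quoting $\|e^{tB}\|\leq 1$ directly.
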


\begin{proof}
Let $u$ be the solution of~\eqref{eq:ode-linearized} with initial datum $e_i$.
Set $y_s = u_s\sqrt{\,\overline c_s}$. Then $y$ solves the equation
\begin{equation}
\label{def:ode-x}
\dot {y} = A_y y, \qquad (A_y)_{ss'} = -\frac1{\sqrt{\,\overline c_s}}\sum_{r\in\sR} \sN_{sr} k_r \sN_{s'r} \frac1{\sqrt{\,\overline c_{s'}}},
\qquad y(0) = {e_i}{\sqrt{\, \overline c_i}}.
\end{equation}
Since $A_y$ is symmetric and non-positive, it can be diagonalized with orthogonal matrices, $A_y = O^{-1} \Lambda O$, where $\Lambda =\diag(\lambda_1,\dots,\lambda _I)$ is a diagonal matrix of non-positive eigenvalues, and $O^T = O^{-1}$. Then $e^{tA_y} = O^{-1}e^{t\Lambda } O$, 
and we calculate, writing $o^k$ for the $k$-th column vector of $O$,
\def\ipc#1#2{(#1,#2)}
\begin{align*}
(e^{A_yt})_{ss'}& =  \ipc {e_s}{e^{A_yt}e_{s'}}= \ipc{e_s}{O^{-1}e^{\Lambda t} Oe_{s'}}= \ipc{Oe_s}{e^{\Lambda t} Oe_{s'}}= \ipc{o^s}{e^{\Lambda t}o^{s'}}.
\end{align*}
Since the eigenvalues are non-positive, this latter expression is bounded in absolute value by
\[
|o^s|\;\bigl|\,e^{\Lambda t}o^{s'}\bigr|
  = \Biggl(\sum_{\sigma\in \sS} e^{2t\lambda_\sigma}|o^{s'}_\sigma|^2\Biggr)^{1/2}
  \leq \Biggl(\sum_{\sigma\in \sS}  |o^{s'}_\sigma|^2\Biggr)^{1/2}
  = |o^{s'}| =1.
\]

By this calculation the solution $y$ of~\eqref{def:ode-x} satisfies
\[
\sup_{t\geq0} |y_o(t)| = {\sqrt{\,\overline c_i}}\,\sup_{t\geq0} \big| (e^{tA_y} e_i)_o\bigr| 
 = {\sqrt{\,\overline c_i}}\,\sup_{t\geq0} \big| (e^{tA_y})_{io}\bigr| 
 \leq {\sqrt{\,\overline c_i}}.
\]
The result then follows from transforming back from $y$ to $u$ and applying~\eqref{eq:alt-def-precision-and-sensitivity}.
\end{proof}

We will see in the examples below (e.g. Example~\ref{ex:sensitivity-larger-maxInvP}) that this bound is very far from being sharp.

\begin{remark}[Upper bound for normal systems]
If $A$ in~\eqref{eq:ode-linearized} is normal, it is orthogonally diagonalizable. Therefore by the same argument for $A_y$ in the previous theorem we have $S=(e^{tA})_{oi} \le 1$.
\end{remark}

\bigskip

Following the explicit formula for the Precision of homogeneous systems, one can also prove a property of the Sensitivity of homogeneous systems:
\begin{theorem}[Upper bound on Sensitivity for homogeneous  systems]
\label{thm:SensitivityUpBoundHomogenous}
If the system $(\sS,\sR,\sN,(\overline c,k))$ is homogeneous of some order $\kappa$, then $S\leq 1$.
\end{theorem}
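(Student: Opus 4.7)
The idea is that, for a homogeneous system, the linearized matrix $A$ from~\eqref{eq:ode-linearized} is the generator of a continuous-time Markov chain on $\sS$, so that $e^{tA}$ is a stochastic matrix; in particular every entry lies in $[0,1]$, which by~\eqref{eq:alt-def-precision-and-sensitivity} immediately gives $S=\sup_{t\geq 0}(e^{tA})_{oi}\leq 1$.

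To justify this I would first unpack the structure of $\sN$ in the homogeneous case. Each column $\sN_{\cdot r}$ corresponds to a reaction $\kappa X_s\rightleftharpoons\kappa X_{s'}$, and hence has exactly two nonzero entries, $+\kappa$ and $-\kappa$. Two consequences for $A$ follow at once. First, for $s\neq s'$ the product $\sN_{sr}\sN_{s'r}$ is either $-\kappa^{2}$ (when $r$ involves both $s$ and $s'$) or $0$, so
\[
A_{ss'}=-\frac{1}{\overline c_s}\sum_{r\in\sR}\sN_{sr}k_r\sN_{s'r}\geq 0.
\]
Second, for every $r$ one has $\sum_{s'}\sN_{s'r}=\kappa+(-\kappa)=0$, and therefore $\sum_{s'}A_{ss'}=0$, i.e.\ $A\mathbf{1}=0$.

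With these two properties---nonnegative off-diagonal entries and vanishing row sums---$A$ is the infinitesimal generator of a continuous-time Markov chain on $\sS$. Standard semigroup theory (equivalently, the fact that a Metzler matrix with $A\mathbf{1}=0$ has $e^{tA}$ row-stochastic) then yields that $e^{tA}$ is nonnegative and satisfies $e^{tA}\mathbf{1}=\mathbf{1}$ for every $t\geq 0$, so every entry of $e^{tA}$ lies in $[0,1]$. Applying~\eqref{eq:alt-def-precision-and-sensitivity} to the column $(e^{tA})_{\cdot i}$ gives $S\leq 1$, as required.

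There is no serious obstacle in the argument; the one substantive step is the identity $\sum_{s'}\sN_{s'r}=0$, which is exactly the content of homogeneity. As a consistency check, the same Markov-chain picture reproduces Theorem~\ref{th:precision-homogeneous-systems}: since $\overline c^{T}u$ is conserved under~\eqref{eq:ode-linearized}, $\overline c$ is a left null vector of $A$, so the stationary distribution of the chain is $\pi_s=\overline c_s/\sum_{s'}\overline c_{s'}$, and $P^{-1}=\lim_{t\to\infty}(e^{tA})_{oi}=\pi_i=\overline c_i/\sum_s\overline c_s$, in agreement with~\eqref{eq:homogPrecision}.
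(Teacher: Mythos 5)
Your proposal is correct and follows essentially the same route as the paper: both establish that $A$ is a Metzler matrix with zero row sums (from the $\pm\kappa$ column structure of $\sN$) and conclude that $e^{tA}$ is nonnegative with $e^{tA}\mathbf{1}=\mathbf{1}$, hence has entries in $[0,1]$. The only difference is presentational---the paper spells out the row-stochasticity via the factorization $e^{tA}=e^{\lambda}e^{tA-\lambda\mathbf{I}}$ and the power series of a nonnegative matrix, where you invoke the standard Markov-generator fact---and your closing consistency check against Theorem~\ref{th:precision-homogeneous-systems} is a nice touch.
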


\begin{proof}
We begin by introducing
\begin{equation*}
k_{ss'} :=
\begin{cases}
k_r \qquad \text{if $X_{s}$ and $X_{s'}$ react}, \\
0 \qquad \text{otherwise, }
\end{cases}
\end{equation*}
and we can assume without loss of generality that each reacting pair $s,s'$ is only connected by one reaction. 
Each column of $\sN$ has only two nonzero entries, which implies that the intersection of supports of two different rows of $\sN$ has at most one element. Thus the matrix $A$ in~\eqref{eq:ode-linearized} reads
\begin{equation*}
A_{ss'} =  -\dfrac{1}{\overline c_{s}}\sum_{r\in \sR} \sN_{sr}k_r  \sN_{s'r} =
\begin{cases}
-\dfrac{\kappa^2}{\overline c_{s}}\sum\limits_{\ell \in \sS} k_{s \ell }     &  s=s', \\
\dfrac{\kappa^2 k_{ss'} }{c_s} &  s\neq s'.
\end{cases} 
\end{equation*}
The matrix $A$ has negative diagonal and nonnegative off-diagonal entries. Based on this observation we proceed with the proof. Let $\mathbf{I}$ be the identity matrix, then for all $t\ge0$ there exists $\lambda <0$ such that $tA-\lambda  \mathbf{I}$ is element-wise nonnegative. We denote this property by $tA-\lambda  \mathbf{I} \succcurlyeq 0$. Powers of such a matrix preserve the property. The matrix exponential $e^{tA-\lambda  \mathbf{I}}$ is an infinite sum of elementwise nonnegative matrices, hence $e^{tA-\lambda  \mathbf{I}}\succcurlyeq 0$. On the other hand the two matrices $\lambda \mathbf{I}$ and $tA-\lambda  \mathbf{I}$ commute. By the properties of matrix exponentials we obtain
\[
e^{tA}  = e^{\lambda \mathbf I}e^{tA-\lambda  \mathbf{I}} = e^{\lambda} e^{tA-\lambda  \mathbf{I}} \succcurlyeq 0. 
\]
The matrix $A$ has the property that the sum of the entries of each row is zero. Let $\mathbf{1} = (1,\hdots ,1)^T$, then $A \mathbf{1} =0$ which implies that $e^{tA} \mathbf{1}=\mathbf{1}$. Each row sum of nonnegative entries of $e^{tA}$ is 1, therefore $ (e^{tA} )_{ss'} \le 1$ for all $s,s'$. The alternative formulation of Sensitivity~\eqref{eq:alt-def-precision-and-sensitivity} then completes the proof:
\[
S =  \sup_{t\geq0} (e^{tA})_{oi} \le 1.
\] 
\end{proof}

The value $S=1$ is special, for the following reason. In some cases one can concatenate, or `daisy-chain' systems, by feeding the output of one system into the input of another. We conjecture that the sensitivity of the chain can never exceed the product of the sensitivities of the individual components. If this is true, then the value $S=1$ is critical; it only makes sense to daisy-chain components with $S>1$. Whether the conjecture is true or not, for some systems tuning of the parameters allows one to approximately achieve product Sensitivity, as the next example shows. 

\begin{example}
\refstepcounter{example}
\label{ex:Daisy}
\noindent
\textbf{Example \ref{ex:Daisy}} (Daisy chaining) 
We extend Example A to the following set of reactions:
\begin{align}
 2 X_1  \overset{k_1 }{\underset{ }\leftrightharpoons }   X_2 + X_3,    \qquad 
 2 X_3   \overset{k_2 }{\underset{ }\leftrightharpoons }  X_4  ,         \qquad
2 X_4  \overset{k_3}{\underset{ }\leftrightharpoons }    X_5   ,
\qquad             
 X_1   \overset{k_4}{\underset{ }\leftrightharpoons }  X_2      .         \nonumber
\end{align} 
The parameters $\overline{c}$ and $k$ are chosen to give rise to four separate timescales, as shown in Figure~\ref{fig:Daisy-Chaining}. A perturbation in input $X_1$ in the first reaction, which is the fastest, results in a quick rise in~$X_3$. In the second reaction the species $X_3$ behaves like an input and amplifies $X_4$. Species $X_4$ shows a Sensitivity near 2 relative to $X_3$, and near $2\cdot2=4$ relative to $X_1$. This chaining is further extended by feeding $X_4$ to $X_5$, obtaining a Sensitivity close to $2\cdot2\cdot2=8$ for $X_5$ relative to input $X_1$. The final reaction is the slowest one, and $X_2$ acts as a buffer whose concentration is considerably larger than the other species. At the slowest time scale, the last reaction reduces the initial rise in $X_1$ and pushes back all the species to a concentration very close to pre-stimulus level, thus creating a large Precision. 
\begin{figure}[H]
\begin{center}
\labellist
\small
\pinlabel{time} [t] at 245 2
\pinlabel{\rotatebox{90}{concentration}} [r] at -8 200
\endlabellist
  \includegraphics[width=.4\textwidth]{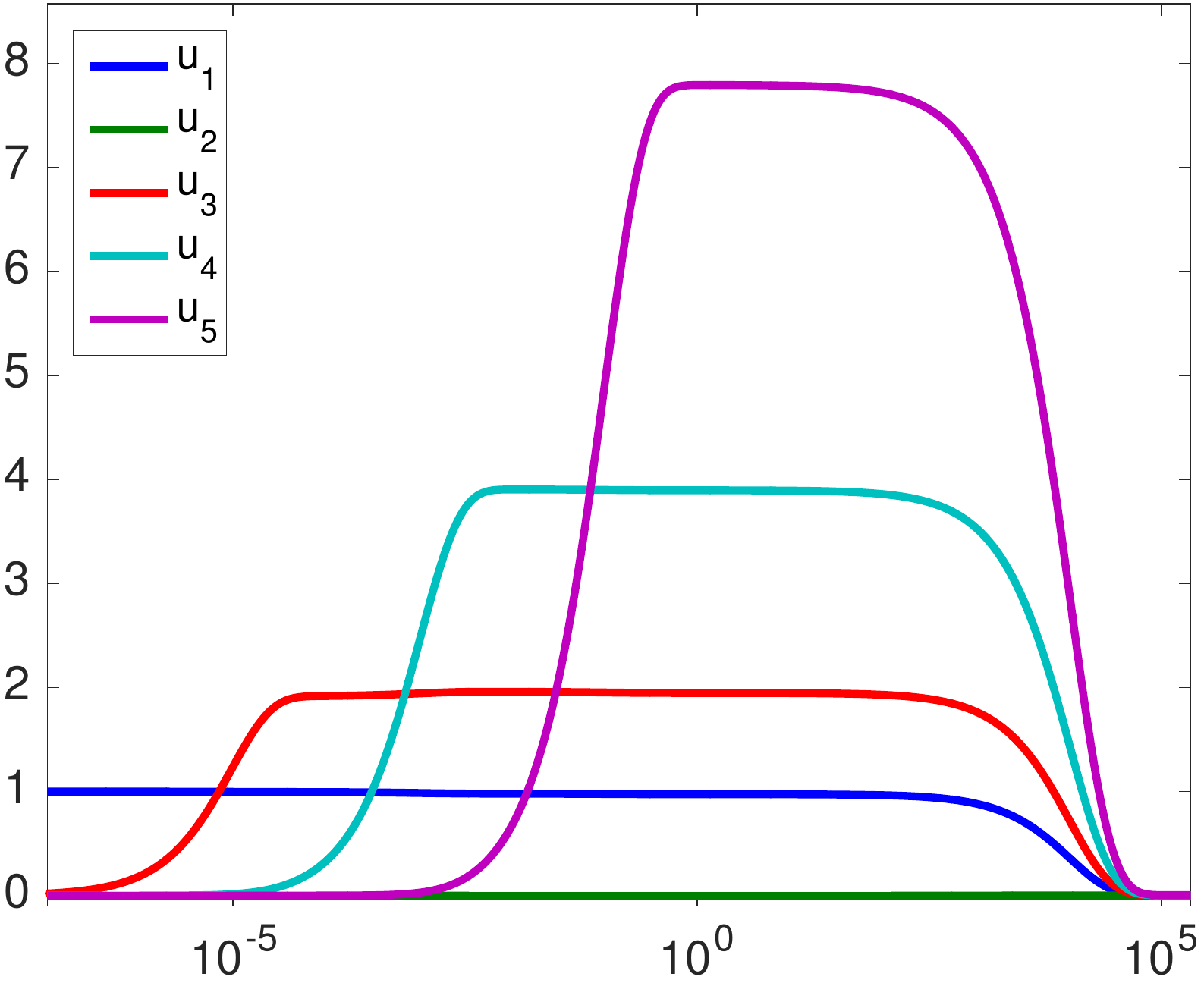} 
\end{center}
\caption{An example of routing the output of one system into the input of a next system (`daisy-chaining'), which in this case allows one to achieve a total Sensitivity close to the product of the individual sensitivities. Here $\overline c = (10,10.000, 0.01, 0.01, 0.001)$ and $k = (1000, 10, 0.01, 0.001)$. }
\label{fig:Daisy-Chaining}
\end{figure}
\end{example}
\subsection{Intermezzo: subsystems}

In Theorem~\ref{th:PSsubsystems} below we construct lower bounds for the maximal Sensitivity by using properties of a \emph{subsystem} and exploiting the possibility of making the subsystem dynamics much faster than the dynamics in the remainder of the system. We first study the relation between optimal Precision and Sensitivity of a subsystem with that of the full system.

\begin{definition}[Subsystems]
\label{def:subsystems}
$(\sS,\sR_1,\sN_1)$ is a \emph{subsystem} of $(\sS,\sR,\sN)$, notation  $(\sS,\sR_1,\sN_1)\subset (\sS,\sR,\sN)$, if $\sR_1\subset \sR$ and $\sN_1$ is the restriction of $\sN$ to the columns given by $\sR_1$. 
%$(\sS,\sR_1,\sN_1,\sK_1)$ is a {subsystem} of $(\sS,\sR,\sN,\sK)$ if, in addition, for all $r\in \sR_1$, $\sK_r = (\sK_1)_r$.
\end{definition}

One can obtain a subsystem by setting the rates $k_r$ of some of the reactions to zero. Note that this is different from setting them to \emph{nearly} zero, since the stoichiometric freedom $\range (\sN)$ is different in the two cases, and therefore the stationary states are also different.

\begin{theorem}[Precision and Sensitivity under taking subsystems]
\label{th:PSsubsystems}
If $(\sS,\sR_1,\sN_1)$ is a \emph{subsystem} of $(\sS,\sR,\sN)$, then 
\begin{itemize}
\item The maximal Sensitivity of the subsystem is less than or equal to the maximal Sensitivity of the full system, and 
\item The maximal and minimal Precision of the subsystem may be smaller than, equal to, or larger than in the full system.
\end{itemize}
\end{theorem}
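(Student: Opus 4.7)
For the first bullet my plan is a parameter-continuity argument. Given $\epsilon > 0$, choose subsystem parameters $(\overline c, k^{(1)}) \in \R_+^\sS \times \R_+^{\sR_1}$ whose subsystem Sensitivity $S^{(1)}$ satisfies $S^{(1)} \geq \maxS(\sS, \sR_1, \sN_1) - \epsilon$, and embed them into the full system as $(\overline c, k^\delta)$ with $k^\delta_r := k^{(1)}_r$ for $r \in \sR_1$ and $k^\delta_r := \delta > 0$ otherwise. Since the generator in \eqref{eq:ode-linearized} depends affinely on the rate vector, the full-system generator $A^\delta$ converges as $\delta \downarrow 0$ to the subsystem generator $A^{(1)}$, and so $e^{tA^\delta}\to e^{tA^{(1)}}$ uniformly on every compact time interval.

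The main obstacle is that Sensitivity is a supremum over the unbounded set $[0,\infty)$. I would handle this by noting that $A^{(1)}$, under the diagonal conjugation $y = \diag(\sqrt{\,\overline c})\,u$ used in the proof of Theorem~\ref{th:upper-bound-sensitivity}, is similar to a symmetric negative-semidefinite matrix; its eigenvalues are therefore real and non-positive, and $\lim_{t\to\infty} e^{tA^{(1)}}$ exists as the spectral projection onto $\ker A^{(1)}$. Consequently the continuous scalar function $t \mapsto (e^{tA^{(1)}}e_i)_o$ admits a finite limit at $+\infty$, so some finite $t_\star$ satisfies $(e^{t_\star A^{(1)}}e_i)_o \geq S^{(1)} - \epsilon$. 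Uniform convergence on $[0, t_\star]$ then yields $(e^{t_\star A^\delta}e_i)_o \geq S^{(1)} - 2\epsilon$ for all sufficiently small $\delta$, whence $\maxS(\sS, \sR, \sN) \geq S^{(1)} - 2\epsilon$. Letting $\epsilon \downarrow 0$ and taking the supremum over subsystem parameters proves the inequality.

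For the second bullet my plan is to exhibit small examples, using Theorem~\ref{th:UpperBoundsInvP} as the computational tool. Equality is immediate when $\sR_1 = \sR$. To show that removing a reaction can strictly decrease the minimal Precision, I would reuse Example~\ref{ex:B} with input $X_1$ and output $X_3$: the full system has the one-dimensional orthogonal complement $W^\perp = \Span\{(2,n,n)\}$ and thus $\maxInvP = n/2$, whereas the subsystem consisting of only the first reaction has a two-dimensional $W^\perp$ containing the new elementary vector $(1, 0, n)$, which satisfies $u_i = 1$, $u_o = n$, so its $\maxInvP = n > n/2$. For the opposite direction I would take the empty subsystem $\sR_1 = \emptyset$: then $\sN_1$ has no columns, $\hat c[\gamma] = \gamma$ identically, and therefore $P^{-1} \equiv 0$, so both the maximal and minimal Precision of the subsystem are $+\infty$, strictly larger than those of essentially any nontrivial full system (for instance the reaction $X_1 \leftrightharpoons X_2$, for which $P \equiv 1$). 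These two constructions between them realise both strict inequalities for both $\maxP$ and the minimal Precision; the one bookkeeping point to check is that the elementary vectors exhibited above are actually elementary in the enlarged $W^\perp$ of the subsystem, which is immediate from minimality of their supports.
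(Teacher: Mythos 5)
Your proof is correct and follows essentially the same route as the paper: the Sensitivity bound via continuous dependence of solutions on the rate parameters over a compact time interval containing a near-optimal time, and the Precision statement via explicit examples in both directions (your empty/disconnected subsystem matching the paper's own disconnection example, and your Example~\ref{ex:B} computation playing the role of the paper's appeal to Example~\ref{ex:Daisy}). Two minor remarks: the spectral digression is unnecessary, since $\sup_{t\geq 0}u_o(t)$ is finite and a near-optimal finite $t_\star$ therefore exists by the definition of supremum alone; and the closing claim that your constructions realise strict inequalities for $\maxP$ as well is slightly overstated (both Example~\ref{ex:B} and its one-reaction subsystem have $\maxP=\infty$), though the theorem's informal wording and the paper's own proof demand no more.
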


\begin{proof}
For the purposes of Precision and Sensitivity, the two systems (the `system' and the `subsystem')  are both described by equations of the form~\eqref{eq:ode-linearized}. We can take the same set $\sR$ of reactions for both, if for the duration of this proof we allow some of the parameters $k_r$ for the subsystem to be zero. 
%This also means that each subsystem can be obtained, at least formally, from full systems by sending some of the parameters $k_r$ to zero.

The behaviour of the Sensitivity now follows from the continuity properties of ordinary differential equations. We  write the solution of~\eqref{eq:ode-linearized} with initial datum $e_i$ as $u(t;\overline c,k)$ to emphasize the choice of parameters. For each $\e>0$, we can find a parameter point $(\overline c^1,k^1)$ for the subsystem (which implies that some of the $k_r^1$ are zero), and a time $t^1\geq0$, such that 
\[
u(t^1;\overline c^1,k^1) \geq \maxS(\sS,\sR_1,\sN_1)  -\e.
\]
On the finite time interval $[0,t^1+1]$, solutions depend continuously on parameters, implying that we can find strictly positive parameter points $(\overline c^2,k^2)$ for the full system such that 
\[
\sup_{t\in[0,t^1+1]} u(t;\overline c^2,k^2) \geq \maxS(\sS,\sR_1,\sN_1)  -2\e.
\]
Since $\e>0$ is arbitrary, it follows that 
\[
\maxS(\sS,\sR,\sN)\geq \maxS(\sS,\sR_1,\sN_1).
\]

A similar argument fails for the Precision, since the two limits $t\to\infty$ and $k_r\downarrow 0$ need not commute. As examples where the Precision of a system is larger or smaller than the Precision of a subsystem, consider
\begin{itemize}
\item If we choose a system with finite Precision and a subsystem in which the input and output species are no longer connected by any reactions, then the output concentration is independent of the input concentration, implying an infinite Precision, which is therefore larger than the Precision of the full system. 
\item In Example~\ref{ex:Daisy} the Precision of the full system is high, while some of the subsystems have low Precision. 
\end{itemize}
\end{proof}

\subsection{Properties of the Sensitivity: lower bounds}
\label{subsec:maxS-maxInvP}

Since $\sup_{t\geq0} c_o^\e(t) \geq c_o^\e(+\infty)$, the inequality $SP\geq1$ always holds (compare the definitions of Sensitivity~\eqref{defeq:sensitivity} and Precision~\eqref{defeq:precision}) and therefore $\maxS\geq \maxInvP$. The next theorem strengthens this property.

\begin{theorem}[Maximal Sensitivity is bounded from below by the maximal inverse Precision over all subsystems]
Given a system $(\sS,\sR,\sN)$, 
\[
\maxS(\sS,\sR,\sN) \geq \max \Bigl\{ \maxInvP(\sS,\sR_1,\sN_1): (\sS,\sR_1,\sN_1)\subset (\sS,\sR,\sN)\Bigr\}.
\]
\end{theorem}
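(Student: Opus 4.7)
The approach is time-scale separation. Given a subsystem $(\sS,\sR_1,\sN_1)$, I would realize the full system by running the subsystem's reactions at their native rates while running the extra reactions at a uniformly small rate~$\delta>0$. On any fixed finite interval the full-system dynamics then approximates the subsystem dynamics, so for $T$ large enough the full-system output $u_o^\delta(T)$ is close to the subsystem's long-time limit $u^1_o(\infty)$, which by the alternative formulation~\eqref{eq:alt-def-precision-and-sensitivity} is the subsystem's inverse Precision. Since Sensitivity is a supremum over all times $t\geq 0$, the value at any particular $T$ gives a lower bound.

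Concretely, fix $\epsilon>0$ and a subsystem $(\sS,\sR_1,\sN_1)\subset(\sS,\sR,\sN)$. Pick $(\overline c, k^1)\in \R_+^\sS\times \R_+^{\sR_1}$ whose subsystem inverse Precision $P_1^{-1}$ satisfies
\[
P_1^{-1}\ \geq\ \maxInvP(\sS,\sR_1,\sN_1)-\epsilon.
\]
Let $A^1$ be the linearized generator from~\eqref{eq:ode-linearized} built from $\sR_1,\sN_1,\overline c,k^1$, and set $u^1(t):=e^{tA^1}e_i$. By~\eqref{eq:alt-def-precision-and-sensitivity} applied to the subsystem, $u^1_o(t)\to P_1^{-1}$, so I can fix $T>0$ with $u^1_o(T)\geq P_1^{-1}-\epsilon$. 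Now define parameters for the full system by keeping $\overline c$ unchanged and setting $k^\delta_r := k^1_r$ for $r\in\sR_1$ and $k^\delta_r:=\delta$ for $r\in\sR\setminus\sR_1$. The full-system linearized matrix is then $A^\delta=A^1+\delta B$ for a fixed matrix~$B$ coming from the extra reactions, so continuity of the matrix exponential on the compact interval $[0,T]$ gives $u^\delta_o(T)\to u^1_o(T)$ as $\delta\downarrow 0$. Choosing $\delta$ small enough that $|u^\delta_o(T)-u^1_o(T)|<\epsilon$,
\[
\maxS(\sS,\sR,\sN)\ \geq\ \sup_{t\geq 0} u^\delta_o(t)\ \geq\ u^\delta_o(T)\ \geq\ \maxInvP(\sS,\sR_1,\sN_1)-3\epsilon.
\]
Letting $\epsilon\downarrow 0$ and taking the maximum over subsystems yields the claim.

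The delicate point is precisely the noncommutativity of the limits $\delta\to 0$ and $t\to\infty$ flagged in Theorem~\ref{th:PSsubsystems}: the subsystem's long-time behaviour is genuinely different from that of the full system, since the stoichiometric space $\range(\sN_1)$ is strictly smaller than $\range(\sN)$. The argument sidesteps this by fixing $T$ first (to approximate $P_1^{-1}$ by $u^1_o(T)$) and only afterwards shrinking $\delta$. Replacing the long-time limit by a finite-time evaluation is legitimate here precisely because Sensitivity is defined as a supremum, not a limit, so $\sup_t u^\delta_o(t)\geq u^\delta_o(T)$ for any particular $T$; no such trick is available for Precision, which is why the analogous statement for Precision fails.
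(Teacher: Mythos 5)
Your proposal is correct and follows essentially the same route as the paper's proof: choose subsystem parameters nearly achieving $\maxInvP(\sS,\sR_1,\sN_1)$, fix a finite time $t^1$ at which the subsystem trajectory is close to its long-time limit, then use continuous dependence on parameters to pass to strictly positive rates, and finally bound the Sensitivity from below by the value at $t^1$. Your explicit remark on why fixing $T$ before shrinking $\delta$ circumvents the noncommutativity of the two limits matches the paper's own observation in the proof of Theorem~\ref{th:PSsubsystems}.
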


\begin{proof}
The proof of this theorem is very similar to that of Theorem~\ref{th:PSsubsystems}.  For any $(\sS,\sR_1,\sN_1)\subset (\sS,\sR,\sN)$ and $\e>0$, we choose a parameter point $(\overline c,k^1)$ for $(\sS,\sR_1,\sN_1)$ (i.e. with $k_r^1=0$ whenever $r\in\sR\setminus \sR_1$) such that 
\[
P^{-1}(\sS,\sR,\sN,(\overline c,k^1)) = P^{-1}(\sS,\sR_1,\sN_1,(\overline c,k^1)) \geq  \maxInvP(\sS,\sR_1,\sN_1)- \e.
\]
We then choose $t^1>0$ such that 
\[
u(t^1;\overline c,k^1) \geq \maxInvP(\sS,\sR_1,\sN_1) - 2\e.
\]
Finally, using continuous dependence on parameters we choose a strictly positive parameter point~$k^2$ such that 
\[
u(t^1;\overline c,k^2) \geq \maxInvP(\sS,\sR_1,\sN_1)- 3\e.
\]
Therefore
\[
\maxS(\sS,\sR,\sN) \geq \maxInvP(\sS,\sR_1,\sN_1) - 3\e,
\]
and since $\e>0$ and the subsystem $(\sS,\sR_1,\sN_1)$ were arbitrary, the result follows.
\end{proof}

\begin{example}
\refstepcounter{example}
\label{ex:sensitivity-larger-maxInvP}
\noindent
\textbf{Example \ref{ex:sensitivity-larger-maxInvP}} (Sensitivity larger than maxInvP).
We consider a chemical reaction network with~6 species and three reactions:
\begin{align*}
X_1+X_3   \overset{k_1}{\underset{ }\leftrightharpoons }  X_6,  \qquad
X_1+X_4   \overset{k_3}{\underset{ }\leftrightharpoons }  X_5,  \qquad
X_2+X_6   \overset{k_2}{\underset{ }\leftrightharpoons }  X_5.          
\end{align*}
Choosing $X_1$ to be the input and $X_6$ the output, Theorem~\ref{th:UpperBoundsInvP} gives $\maxInvP=1$.
Figure~\ref{fig:overshoot}, however, shows a value for the sensitivity of about $1.13$, which therefore exceeds $\maxInvP$. A formal argument inspired by a numerical observation suggests that the Sensitivity is bounded from above by $1+e^{-2}\approx 1.135$, but proving this bound remains open.
\begin{figure}[H]
\labellist
\small
\pinlabel{time} [t] at 255 2
\pinlabel{\rotatebox{90}{concentration}} [r] at -7 190
\endlabellist
\includegraphics[height=0.35\textwidth]{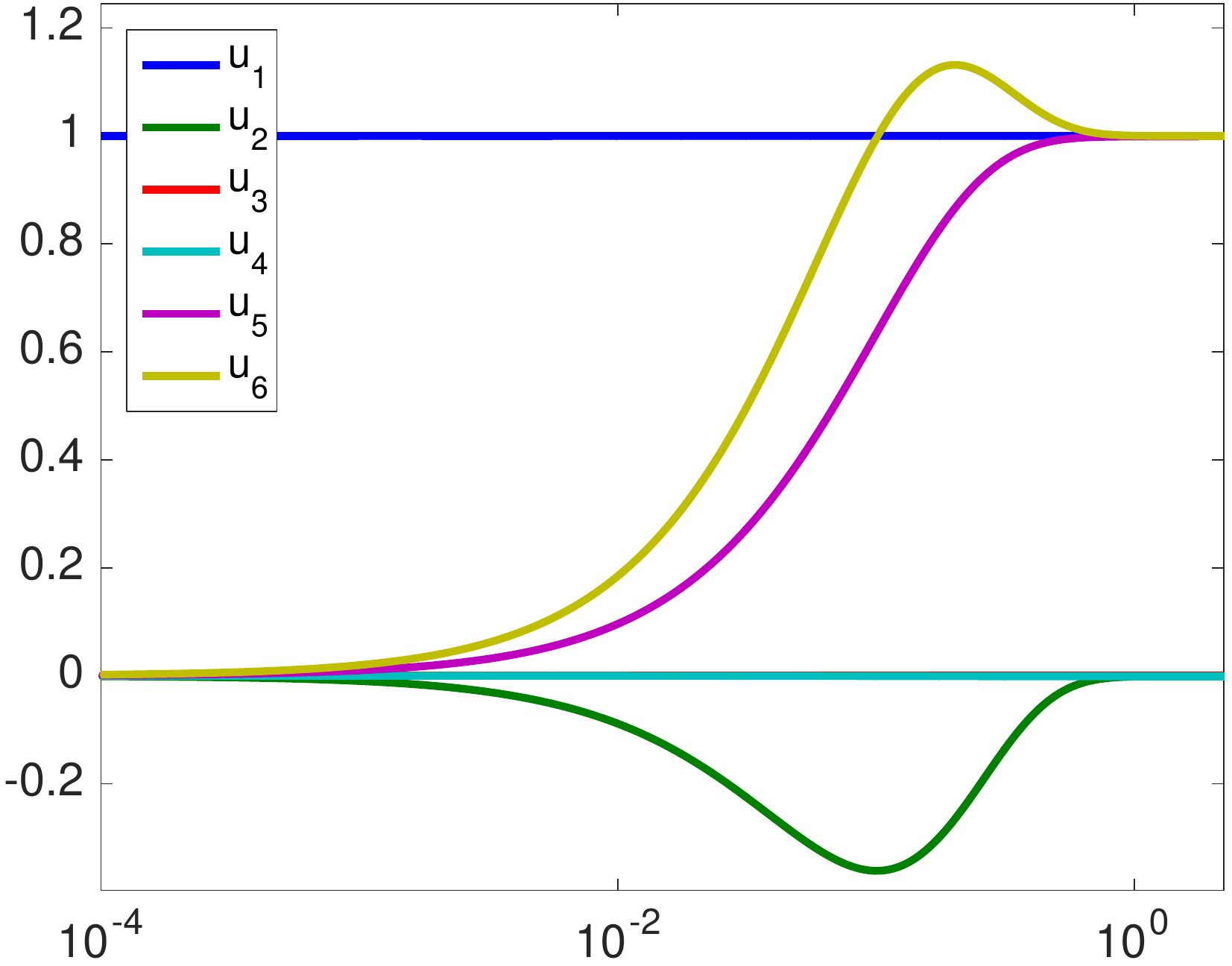}
\vskip2\jot
\caption{ Solution of $\dot{u}=Au$ with $u(0)=e_1$ and parameters $k=(0.1,10,10^4)$ and $\overline{c}=(10^4,10^{-2},10^4,10^3,1,10^{-4} )$. Here  $S\approx1.13$. Note that this number is much smaller than $\sqrt{\overline c_i/\overline c_o} = 10^4$.   }
\label{fig:overshoot}
\end{figure}
\end{example}

\section{\label{sec:matroid}Matroid theory and the proof of Theorem~\ref{th:UpperBoundsInvP}}
\subsection{Matroids and the combinatorics of the stoichiometric space}
The proof of Theorem~\ref{th:UpperBoundsInvP} was conceived with a certain matroid related to the stoichiometric matrix $\sN$ in mind. Seeing that the entire argument could be also stated in terms of linear algebra, we chose to avoid the use of this concept in our presentation of the proof. We will give the matroid perspective here as an optional service to the reader.  We briefly describe the relevant matroid theory here, referring to the book of Oxley \cite{Oxley14} for a more detailed account and full proofs of the statements below. 

Consider a finite set of vectors $E\subseteq \R^n$ and let 
$$\mathcal I:=\{ X\subseteq E: X\text{ is linearly independent over }\R\}.$$
Then $\mathcal I$ has the following three properties.
\begin{itemize}
\item[\bf(I0)] $\emptyset\in \mathcal I$
\item[\bf(I1)] if $X\in \mathcal I$ and $Y\subseteq X$, then $Y\in \mathcal I$
\item[\bf(I2)] if $X, Y\in \mathcal I$ and $|X|<|Y|$, then there exists an $e\in Y\setminus X$ so that $X\cup\{e\}\in \mathcal I$.
\end{itemize}
A {\em matroid} is any pair $M=(E,\mathcal I)$ where $E$ is a finite set and $\mathcal I$ is a set of subsets of $E$ satisfying the above three axioms. In this more abstract setting, we also call a set $X\subseteq E$ {\em independent} if $X\in\mathcal I$ and {\em dependent} otherwise. A set $B\subseteq E$ is called a {\em basis} if $B$ is an inclusion-wise maximal independent set, and $C\subseteq E$ is a {\em circuit} if $C$ is an inclusion-wise minimal dependent set. 

Let $M=(E, \mathcal I)$ be a matroid, and let ${\mathcal B}$ be the set of bases of $M$. Then the set 
$${\mathcal B}^*:=\{ E\setminus B: B\in \mathcal B\}$$
is the set of bases of another matroid $M^*$, the {\em dual} of $M$. % (observe that  ${\mathcal B}^*$ uniquely determines $M^*$). 

Given this elementary result in matroid theory, the proof of the following statement is straightforward.
\begin{lemma}\label{lem:matroid_basic}If $C_0$ is a circuit of $M$ and $e_0\in C_0$, then there is a basis $B$ of $M$ such that
\begin{itemize}
\item $C_0\setminus \{e_0\}\subseteq B$, and $e_0\not\in B$;
\item for each $e \in B$, there is a circuit $D$ of $M^*$ such that $D\subseteq (E\setminus B)\cup \{e\}$; and
\item for each $e \in E\setminus B$, there is a circuit $C$ of $M$ such that $C\subseteq B\cup \{e\}$.
\end{itemize}
\end{lemma}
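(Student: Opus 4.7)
\medskip
\noindent\textbf{Proof plan.} The plan is to produce the basis $B$ by extending $C_0\setminus\{e_0\}$ to a basis of $M$ in the standard way, and then to obtain the two families of circuits as the \emph{fundamental circuits} associated with $B$, viewed in $M$ and in the dual $M^*$.

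\emph{Step 1: Constructing $B$.} First I would invoke (I1) to conclude that $C_0\setminus\{e_0\}$ is independent: indeed $C_0$ is an inclusion-minimal dependent set, so every proper subset lies in $\mathcal I$. Next, using the standard consequence of (I0)--(I2) that every independent set extends to a basis (apply (I2) repeatedly against any fixed basis), I would let $B$ be any basis of $M$ containing $C_0\setminus\{e_0\}$. To verify $e_0\notin B$: if $e_0$ did lie in $B$ then $C_0\subseteq B$; but $C_0$ is dependent and $B$ is independent, so by (I1) this is a contradiction. Hence $B$ satisfies the first bullet.

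\emph{Step 2: Fundamental circuits in $M$.} For any $e\in E\setminus B$, the set $B\cup\{e\}$ is dependent in $M$ (otherwise it would be an independent set strictly larger than the basis $B$, contradicting maximality). Any dependent set contains a circuit, so there exists a circuit $C$ of $M$ with $C\subseteq B\cup\{e\}$; this yields the third bullet.

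\emph{Step 3: Fundamental circuits in $M^*$.} I would apply exactly the same reasoning in the dual matroid $M^*$, using the definition $\mathcal B^* = \{E\setminus B' : B'\in\mathcal B\}$ recalled in the excerpt. Since $B$ is a basis of $M$, the set $E\setminus B$ is a basis of $M^*$. For any $e\in B$ we have $e\notin E\setminus B$, and the argument of Step 2 applied inside $M^*$ shows that $(E\setminus B)\cup\{e\}$ is dependent in $M^*$, hence contains a circuit $D$ of $M^*$ with $D\subseteq(E\setminus B)\cup\{e\}$. This gives the second bullet.

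\emph{Obstacles.} There are essentially no obstacles: once one believes the basic facts that (a) every independent set extends to a basis and (b) adding any element to a basis creates a dependent set (whence a circuit), the statement falls out immediately in both $M$ and $M^*$. The only point that requires a moment's thought is the argument $e_0\notin B$ in Step 1, and the only ``nontrivial'' ingredient is the existence of the dual matroid $M^*$, which is simply quoted from the surrounding discussion.
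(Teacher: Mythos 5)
Your proof is correct and follows essentially the same route as the paper's: extend the independent set $C_0\setminus\{e_0\}$ to a basis $B$, observe $e_0\notin B$ since otherwise $C_0\subseteq B$ would make $B$ dependent, and obtain both families of circuits as fundamental circuits with respect to $B$ in $M$ and with respect to the dual basis $E\setminus B$ in $M^*$. No substantive differences.
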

\proof As $C_0$ is a circuit of $M$, the set $I:=C_0\setminus\{e_0\}$ is an independent set. Let $B$ be any inclusion-wise maximal set containing $I$. Then $B$ is a basis of $M$. Since $B$ is independent, the circuit $C$ cannot be fully contained in $B$, so $e_0\not\in B$. Since $B$ is maximal, the set $B\cup \{e\}$ is dependent for each $e\in E\setminus B$, and hence contains a circuit $C$ of $M$. By definition of the dual, $E\setminus B$ is a basis of $M^*$. Since $E\setminus B$ is a maximal independent set of $M^*$, the set $(E\setminus B)\cup \{e\}$ is dependent in $M^*$ for each $e\in B$, hence contains a circuit $D$ of $M^*$. \endproof

Given any $r\times E$ matrix $A$, define
$${\mathcal I}_A:=\{X\subseteq E: \text{the colums of }A\text{ indexed by $X$ are linearly independent}\}.$$
Then $M(A):=(E, {\mathcal I}_A)$ is a {\em linear matroid}. 

\begin{lemma} Let $A$ be an $r\times E$ matrix, and let $M=M(A)$. Then $C$ is a circuit of $M$ if and only if $C=\supp(x)$ for an elementary vector $x\in \ker(A)$.
\end{lemma}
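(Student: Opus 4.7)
The plan is to prove both directions by translating the combinatorial conditions (``dependent'', ``minimal'') directly into the corresponding linear-algebraic conditions (``kernel vector with support $\subseteq C$'', ``minimality of support''). The setup is that $\supp(x)\subseteq C$ together with $Ax=0$, $x\neq 0$, is precisely the statement that the columns of $A$ indexed by $C$ admit a nontrivial linear relation, so dependence of a subset $C\subseteq E$ in $M(A)$ corresponds exactly to the existence of a nonzero $x\in\ker(A)$ with $\supp(x)\subseteq C$.

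For the forward direction I would assume $C$ is a circuit of $M=M(A)$. Since $C$ is dependent, there exists a nonzero $x\in\ker(A)$ with $\supp(x)\subseteq C$. The first key step is to observe that $\supp(x)=C$: if $\supp(x)\subsetneq C$, then $\supp(x)$ would itself be a dependent subset of $E$ (witnessed by $x$), contradicting the minimality of $C$ as a dependent set. The second key step is to verify that $x$ is elementary: any nonzero $y\in\ker(A)$ with $\supp(y)\subsetneq\supp(x)=C$ would again exhibit a proper dependent subset of $C$, again contradicting the circuit property.

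For the backward direction I would take an elementary vector $x\in\ker(A)$ and set $C:=\supp(x)$. The existence of $x$ shows $C$ is dependent. For minimality, suppose for contradiction that some proper subset $C'\subsetneq C$ is also dependent; then there is a nonzero $y\in\ker(A)$ with $\supp(y)\subseteq C'\subsetneq\supp(x)$, which directly contradicts the minimality of $\supp(x)$ in the elementary-vector definition. Hence $C$ is an inclusion-minimal dependent set, i.e.\ a circuit of $M$.

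There is essentially no obstacle here: the statement is a direct unpacking of definitions, using the single nontrivial equivalence ``$\{$columns of $A$ indexed by $S\}$ are linearly dependent $\iff$ there exists nonzero $x\in\ker(A)$ with $\supp(x)\subseteq S$''. The only care needed is to be precise about ``minimal'': in the definition of a circuit, minimality refers to set inclusion among dependent subsets, while in the definition of an elementary vector, minimality refers to set inclusion among supports of nonzero kernel vectors; the above argument makes clear that these two notions of minimality correspond under the bijection $C\leftrightarrow x$.
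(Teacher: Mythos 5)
Your proof is correct and follows essentially the same route as the paper, whose entire proof is the single observation that a set $X$ is dependent in $M(A)$ if and only if there is a nonzero $x\in\ker(A)$ with $\supp(x)\subseteq X$; you simply spell out the routine minimality bookkeeping that the paper leaves implicit.
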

\proof A set $X\subseteq E$ is dependent in $M$ if and only if there is a linear dependency among the columns of $A$ pointed out by $X$, i.e. a nonzero vector $x\in \R^E$ with $Ax=0$ and $\supp(x)\subseteq X$. \endproof
The circuits of  the dual of $M(A)$ can be similarly characterized.
\begin{lemma} Let $A$ be an $r\times E$ matrix, and let $M=M(A)$. Then $D$ is a circuit of $M^*$ if and only if $D=\supp(y)$ for an elementary vector $x\in \rowspace(A)$.
\end{lemma}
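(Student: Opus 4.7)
The plan is to mirror the proof of the previous lemma, but now working with the dual matroid. The key starting observation is that by the very definition of $M^*$, a set $D \subseteq E$ is \emph{independent} in $M^*$ iff $D$ is disjoint from some basis $B^* = E \setminus B$ of $M^*$, equivalently iff $E\setminus D$ contains a basis $B$ of $M$. Translated to linear algebra, this happens exactly when the columns of $A$ indexed by $E\setminus D$ span $\range(A)$, i.e.\ have rank equal to $\rank(A)$.

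First I would translate ``$D$ is dependent in $M^*$'' into a statement about $\rowspace(A)$. The columns of $A$ indexed by $E\setminus D$ fail to span $\range(A)$ iff there exists some $z\in \R^r$ with $z^T A_{\cdot,e} = 0$ for every $e\in E\setminus D$ but $z^T A \neq 0$. Setting $y := A^T z$, this is precisely the existence of a nonzero vector $y\in\rowspace(A)$ with $\supp(y)\subseteq D$. Hence $D$ is dependent in $M^*$ iff some nonzero $y\in\rowspace(A)$ satisfies $\supp(y)\subseteq D$.

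Now I would extract the lemma by taking minimal elements on each side of this equivalence. Circuits of $M^*$ are the minimal dependent sets of $M^*$, and elementary vectors of $\rowspace(A)$ are the nonzero vectors of minimal support in $\rowspace(A)$. If $D$ is a circuit of $M^*$, choose any witnessing nonzero $y\in\rowspace(A)$ with $\supp(y)\subseteq D$; we must have $\supp(y) = D$, since otherwise $\supp(y)\subsetneq D$ would be a strictly smaller dependent set of $M^*$, contradicting minimality of $D$. The same observation shows $y$ is elementary: any nonzero $y'\in\rowspace(A)$ with $\supp(y')\subsetneq \supp(y) = D$ would again give a dependent set of $M^*$ properly contained in $D$. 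Conversely, if $y\in\rowspace(A)$ is elementary then $D := \supp(y)$ is dependent in $M^*$; if some $D'\subsetneq D$ were dependent, the previous step would produce a nonzero $y'\in\rowspace(A)$ with $\supp(y')\subseteq D'\subsetneq \supp(y)$, contradicting elementarity of $y$. Hence $D$ is a minimal dependent set, i.e.\ a circuit of $M^*$.

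The main obstacle is really only bookkeeping around the rank/orthogonality translation in the second paragraph and the minimality argument in the third; nothing deep is needed once one recalls $\rowspace(A) = \{A^T z : z\in \R^r\}$ and the standard duality between column span of $A$ on $E\setminus D$ and nonzero row-space vectors supported in $D$.
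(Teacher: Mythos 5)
Your proof is correct and is essentially the argument the paper intends when it says the dual circuits ``can be similarly characterized'': dependence in $M^*$ translates via basis complementation into rank deficiency of the columns of $A$ indexed by $E\setminus D$, hence into the existence of a nonzero vector of $\rowspace(A)$ supported inside $D$, and taking minimal sets on both sides of that equivalence matches circuits of $M^*$ with elementary vectors. One small wording slip in your first paragraph: a set is independent in $M^*$ when it is \emph{contained in} some basis $E\setminus B$ of $M^*$ (equivalently, disjoint from some basis $B$ of $M$), not ``disjoint from'' a basis of $M^*$; the form you actually use afterwards, namely that $E\setminus D$ contains a basis of $M$, is the correct one, so nothing downstream is affected.
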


Now the stoichiometric matrix $\sN$ is an $\sS\times \sR$ matrix, and thus the transpose matrix $\sN^T$ is an $\sR\times \sS$ matrix. The matroid $M=M(\sN^T)$ has ground set $\sS$ and divides the subsets of $\sS$ in dependent and independent sets. With $W:=\range(\sN)$, the circuits of $M$ are the minimal supports of vectors $u\in \ker(\sN^T)=W^\perp$, and the circuits of $M^*$ are the minimal supports of vectors $w\in \rowspace(\sN^T)=\range(\sN)=W$. 

Lemma \ref{lem:basic} follows directly by applying Lemma \ref{lem:matroid_basic} to $M=M(\sN^T)$, $C_0=\supp(u^*)$, and $e_0=i$.

\subsection{Computing the upper bound on the inverse precision}
Let $C$ be a circuit of the stoichiometric matroid $M=M(\sN^T)$. Finding a vector $u\in W^\perp$ with $C=\supp(u)$ and $u_i=1$ is a matter of elementary linear algebra. Computing the maximum
$$\max \{ u_o:  u\text{ elementary in } W^\perp,\  u_i=1, \text{ or } u=0\}$$
reduces to enumerating the collection of circuits of the stoichiometric matroid. In the same vein, to determine 
$$\max \{ w_i:  w\text{ elementary in } W,\  w_o=-1, \text{ or } w=0\},$$
it suffices to enumerate the circuits of $M^*$.

The number of circuits of a matroid on $n$ elements can be exponential in $n$, and so we cannot expect to enumerate the full set of circuits in polynomial time. Boros et al.~\cite{Boros2003} describe a simple algorithm which will enumerate the circuits of a matroid  in {\em incremental polynomial time}. That is, there exists a polynomial $p(n,k)$ so that listing the first $k$ circuits of a matroid $M=(E, \mathcal I)$ takes their algorithm $p(|E|, k)$ time. In a related paper~\cite{Khachian2005},  an algorithm is described which will  generate the circuits containing a fixed element in incremental polynomial time. For our application, we would like to enumerate the circuits containing two fixed elements of the ground set, but it appears to be an open problem whether this can be done in incremental polynomial time.  On the practical side, SAGE, the open-source computer algebra system, implements several algorithms for enumerating the circuits of a matroid.

Given that there are two ways to determine the upper bound, whose running times will depend on the number of circuits of $M$ or $M^*$, one would like to estimate which one of these matroids has the least number of circuits. 

It is straightforward that in in a matroid, any two bases have the same cardinality. The {\em rank} of a matroid $M$ is the cardinality of any basis of $M$. A matroid of rank $r$ on $n$ elements may have as many as $\binom{n}{r+1}$ circuits, the maximum being attained by the {\em uniform} matroid of rank $r$. 

The rank of the stoichiometric matrix $M(\sN^T)$ equals $r=\rank(\sN)$, and the rank of its dual  $M(\sN^T)^*$ is $n-r$, where $n=|\sS|$ is the size of the ground set. Taking the maximum number of circuits of a matroid of rank $r$ as a coarse estimate for the true number of circuits, we expect that  in general $M(\sN^T)$ will have fewer circuits than $M(\sN^T)^*$ while $2r\leq n$.

\section{Non-detailed-balance chemical reaction networks}
\label{sec:dissipative}
 
We now briefly comment on systems with mass-action kinetics but without the detailed-balance assumption. In these systems the kinetic function $\sK$ has the form
\begin{equation}
\label{eq:mass-action}
\sK_r(c) =  k_r^+ c^{\alpha_r} - k_r^- c^{\beta_r} ,
\end{equation}
where  $\alpha_r$ and $\beta_r$ are as in Definition~\ref{def:system}, and $k_r^+$ and $k_r^-$ are arbitrary non-negative coefficients. The network is called \emph{reversible} if $k_r^\pm>0$.

In the case of detailed-balance systems, we chose to perturb the system by adding a small amount of a certain species. Although in non-detailed-balance systems there are more choices for perturbation, here we stick to the same method. Definitions~\ref{def:sensitivity} and~\ref{def:precision} for Sensitivity and Precision do not rely on the assumption of detailed balance, whereas in the alternative formulations~\eqref{eq:alt-def-precision-and-sensitivity}, the matrix $A$ appears and this matrix owes its structure to the detailed-balance assumption. First we provide an alternative formulation for non-detailed-balance systems.
 \begin{lemma}[Alternative formulations of Precision and Sensitivity in non-detailed-balance systems]
 Let $t\mapsto u(t)$ be the solution of $\dot{u} = \hat{A} u$ with 
 \[ \hat{A}_{ss'} =  -\frac{1}{\overline c_{s}}\sum_{r\in \sR} (\alpha_{sr} - \beta_{sr} )   (\alpha_{s'r}  k_r^+ \overline{c}^{\alpha_r}   - \beta_{s'r} k_r^- \overline{c}^{\beta_r}   ),   \]
 and initial data $u(0)=e_i$. 
 The Precision and Sensitivity then have the alternative formulations
 \begin{equation}
 \label{eq:NDB-alt-def-precision-and-sensitivity}
 P^{-1} = \lim_{t\to\infty} u_o(t) = \lim_{t\to\infty} (e^{t\hat{A}})_{oi}
 \qquad
 \text{and}
 \qquad
 S =  \sup_{t\geq0} u_o(t) = \sup_{t\geq0} (e^{t\hat{A}})_{oi}.
 \end{equation}
 \end{lemma}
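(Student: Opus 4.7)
The plan is to follow the same route as in the detailed-balance case: linearize $\dot c = -\sN \sK(c)$ around the stationary state $\overline c$ and then rescale so that the initial condition becomes $e_i$. The crucial observation is that, although individual reactions need not be at equilibrium when detailed balance fails, stationarity of $\overline c$ still gives $\sN \sK(\overline c)=0$, so the zero-order terms in the expansion cancel and only the Jacobian of $\sK$ contributes.

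First I would write $c^\e(t) = \overline c + \e V(t) + O(\e^2)$ with $V(0)=e_i$. A direct computation from~\eqref{eq:mass-action} yields
\[
\frac{\partial \sK_r}{\partial c_{s'}}\bigg|_{\overline c} = \frac{1}{\overline c_{s'}}\bigl(\alpha_{s'r} k_r^+ \overline c^{\alpha_r} - \beta_{s'r} k_r^- \overline c^{\beta_r}\bigr),
\]
so that $V$ satisfies the linear ODE
\[
\dot V_s = -\sum_{r}(\alpha_{sr}-\beta_{sr}) \sum_{s'}\frac{1}{\overline c_{s'}}\bigl(\alpha_{s'r}k_r^+\overline c^{\alpha_r}-\beta_{s'r}k_r^-\overline c^{\beta_r}\bigr)V_{s'}.
\]
To produce the normalization $u(0)=e_i$ required by the statement, I would then introduce the rescaling $u_s(t) := (\overline c_i/\overline c_s)\,V_s(t)$; this gives $u_s(0)=\delta_{si}$, and substituting $V_{s'}=(\overline c_{s'}/\overline c_i)u_{s'}$ back into the equation for $V$ and multiplying through by $\overline c_i/\overline c_s$ leaves exactly $\dot u = \hat A u$ with the $\hat A$ in the lemma: the $\overline c_{s'}$ factor in the denominator combines with $V_{s'}$ to produce $u_{s'}/\overline c_i$, and the remaining $\overline c_i$ cancels with the prefactor, leaving only $1/\overline c_s$.

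Finally, the two formulas in~\eqref{eq:NDB-alt-def-precision-and-sensitivity} follow by expanding the logarithms in Definitions~\ref{def:sensitivity} and~\ref{def:precision}: $\log c^\e_i(0)-\log\overline c_i = \log(1+\e/\overline c_i) \sim \e/\overline c_i$ and $\log c^\e_o(t)-\log\overline c_o \sim \e\, V_o(t)/\overline c_o$, so that the ratio tends to $V_o(t)\,\overline c_i/\overline c_o = u_o(t)$, after which $\sup_{t\ge 0}$ and $\lim_{t\to\infty}$ give $S$ and $P^{-1}$ respectively, while $u_o(t)=(e^{t\hat A}e_i)_o=(e^{t\hat A})_{oi}$ since $u(0)=e_i$. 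I do not foresee a serious obstacle here: the only delicate point is the existence of the long-time limit $V_o(\infty)$, and this is not really a question to be settled within the proof but is already implicit in the requirement that $P^{-1}$ itself be well-defined for the original nonlinear trajectory.
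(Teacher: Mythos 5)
Your proposal is correct and is precisely the ``simple manipulation'' the paper alludes to without writing out: linearize $\dot c=-\sN\sK(c)$ at the stationary state using the Jacobian of the mass-action kinetics~\eqref{eq:mass-action}, rescale by $\overline c_i/\overline c_s$ to normalize $u(0)=e_i$, and expand the logarithms in Definitions~\ref{def:sensitivity} and~\ref{def:precision}. The computation of $\hat A$ checks out, and your caveat about the existence of the long-time limit matches the level of rigor the paper itself adopts.
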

\begin{proof}
The proof is again a simple manipulation.
\end{proof}
  One can ask what happens with the bounds on Precision and Sensitivity when detailed balance does not hold. We start by obtaining a bound for reversible unimolecular reaction networks that do not necessarily satisfy detailed balance.
\begin{theorem}
\label{th:upper-bound-sensitivity-dissipative}
 In a reversible unimolecular reaction network we have $S\le 1$.
\end{theorem}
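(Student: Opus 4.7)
The plan is to run the argument of Theorem \ref{thm:SensitivityUpBoundHomogenous} with the matrix $\hat A$ from \eqref{eq:NDB-alt-def-precision-and-sensitivity} playing the role of $A$. The central task is to verify two structural properties: (i) $\hat A$ has nonpositive diagonal and nonnegative off-diagonal entries, and (ii) $\hat A\mathbf{1}=0$. Together these give $e^{t\hat A}\succcurlyeq 0$ elementwise with every row summing to $1$, so by the alternative formulation $S=\sup_{t\ge 0}(e^{t\hat A})_{oi}\le 1$, exactly as in the homogeneous detailed-balance case.

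For (i), I would compute the contribution of a single unimolecular reaction $X_a\leftrightharpoons X_b$, for which $\alpha_r=e_a$, $\beta_r=e_b$, and the $r$-th column of $\sN$ equals $e_a-e_b$. A direct substitution into the formula for $\hat A$ yields diagonal contributions $-k_r^+$ to $\hat A_{aa}$ and $-k_r^-$ to $\hat A_{bb}$, and off-diagonal contributions $k_r^-\overline c_b/\overline c_a$ to $\hat A_{ab}$ and $k_r^+\overline c_a/\overline c_b$ to $\hat A_{ba}$, all of the required sign.

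For (ii), I would exploit that for unimolecular reactions $\sum_{s'}\alpha_{s'r}=\sum_{s'}\beta_{s'r}=1$. Summing the defining expression for $\hat A_{ss'}$ over $s'$ then collapses to
\[
\sum_{s'}\hat A_{ss'}=-\frac{1}{\overline c_s}\sum_{r\in\sR}(\alpha_{sr}-\beta_{sr})\bigl(k_r^+\overline c^{\alpha_r}-k_r^-\overline c^{\beta_r}\bigr)=-\frac{1}{\overline c_s}\sum_{r\in\sR}\sN_{sr}\sK_r(\overline c),
\]
which vanishes because $\overline c$ is a steady state of the nonlinear system~\eqref{eq:ode}. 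Note that detailed balance is not required here, only the global stationarity condition $\sN\sK(\overline c)=0$.

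With (i) and (ii) in hand, the remainder is a transcription of the proof of Theorem \ref{thm:SensitivityUpBoundHomogenous}: for sufficiently negative $\lambda$ the matrix $t\hat A-\lambda\mathbf{I}$ is elementwise nonnegative, so its exponential is too, and multiplying by the commuting scalar $e^\lambda$ gives $e^{t\hat A}\succcurlyeq 0$; combined with $e^{t\hat A}\mathbf{1}=\mathbf{1}$, each row of $e^{t\hat A}$ is a nonnegative vector summing to $1$, forcing $(e^{t\hat A})_{oi}\le 1$ for all $t\ge 0$. The one conceptual subtlety is recognizing that the detailed-balance hypothesis entered the original argument only through row-sum-zero of $A$, and that in the unimolecular case this is recovered from the weaker and automatic fact that $\overline c$ is a stationary point, reflecting the conservation law $\sum_s c_s=\mathrm{const}$ that every unimolecular network enjoys.
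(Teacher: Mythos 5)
Your proof is correct and follows essentially the same route as the paper's: establish that $\hat A$ has nonnegative off-diagonal entries, nonpositive diagonal, and zero row sums (the latter coming from stationarity of $\overline c$ rather than detailed balance), and then reuse the nonnegativity and row-stochasticity argument of Theorem~\ref{thm:SensitivityUpBoundHomogenous}. The only cosmetic difference is that you derive these structural properties from the general formula for $\hat A$ in the non-detailed-balance lemma, whereas the paper writes down the unimolecular ODE directly and reads them off; your identification of $\sum_{s'}\hat A_{ss'}=-\overline c_s^{-1}(\sN\sK(\overline c))_s=0$ makes explicit a point the paper leaves implicit.
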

\begin{proof}
Reactions in such a network are of the type
  \[X_s  \overset{k_{s's} }{\underset{k_{ss'} }\rightleftharpoons }  X_{s'} .\]
with both rate constants $k_{s's}$ and $k_{ss'}$ strictly positive when $X_s$ and $X_{s'}$ react with each other and we assume them to be zero otherwise. This allows to write the following ODE for the evolution of each species.
\begin{equation*}
\dot{c}_s = \sum_{\ell \in \mathcal{S}} k_{s\ell} c_{\ell } - \sum_{\ell \in \mathcal{S}} k_{\ell s} c_s.  \nonumber
\end{equation*}
Let $u_s(t) = (c_s(t) - \overline{c}_s)/\overline{c}_s$, then $u$ solves $\dot{u} = \hat{A} u$ where
\begin{equation*}
\hat{A}_{ss'} =  
\begin{cases}
    -  \sum_{\ell} k_{\ell s}  &  s=s', \\\\
    \frac{\overline{c}_{s'}}{\overline{c}_s} k_{ss'}   &  s\neq s'.
\end{cases} 
\end{equation*}
We note that $\hat{A}$ has negative diagonal and nonnegative off-diagonal elements, moreover $\hat{A} \mathbf{1} =0$. Showing that $S=(e^{t\hat{A}})_{oi}\le 1$ is similar to the argument in Theorem~\ref{thm:SensitivityUpBoundHomogenous}.
 \end{proof}
 
Consider the following example of how a small system can achieve large Sensitivity and Precision.

\begin{example}
\refstepcounter{example}
\label{ex:dissipative}
\noindent \textbf{Example \ref{ex:dissipative}} (Adaptation in non-detailed-balanced systems).
Consider a receptor~$R$, a ligand $L$, a phosphate group $p$, complexes $Rp$, $RL$, $RLp$, and $Y$ (all indexed from 1 to 7 respectively) that participate in reactions depicted in Figure~\ref{fig:ExampleRLp}. Let $L$ serve as input and $RLp$ output of the network. Figure~\ref{fig:ExampleRLp} shows how in the absence of detailed balance a Sensitivity near 70 can be achieved. Note that the detailed-balance version of the network has $\maxInvP =1$. In fact, if we omit the last reaction, then with the same parameters an inverse Precision near 70 is achieved. The last reaction acts as a feedback with delay and performs the adaptation step. One can further increase the Precision by increasing the concentration of $Y$ and making sure that the last reaction is the slowest one. 
\begin{figure}[H]
\begin{center}
\labellist
\small
\pinlabel{time} [t] at 245 2
\pinlabel{\rotatebox{90}{concentration}} [r] at -5 180
\endlabellist
  \includegraphics[width=.4\textwidth]{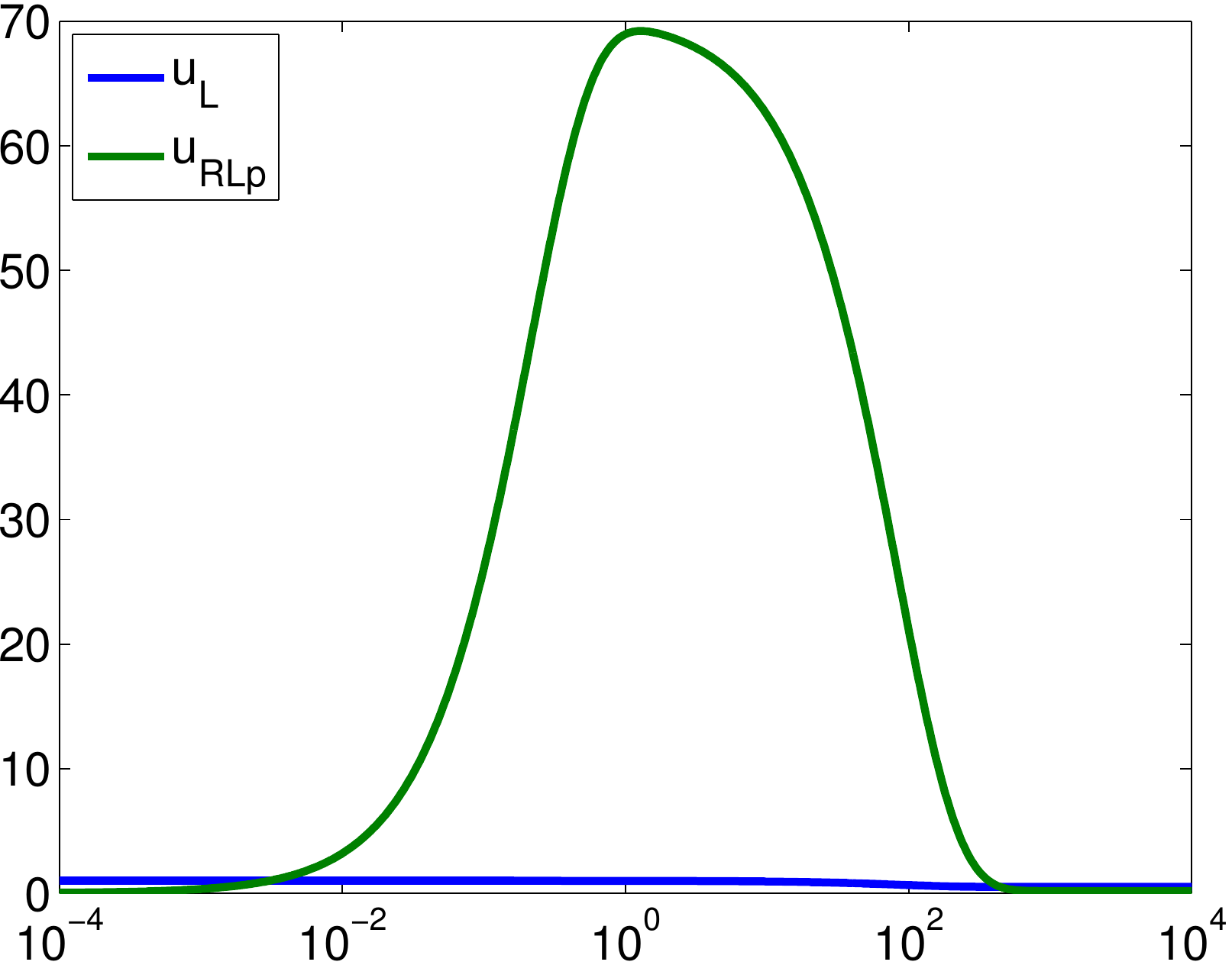} 
  \qquad
    \includegraphics[width=.4\textwidth]{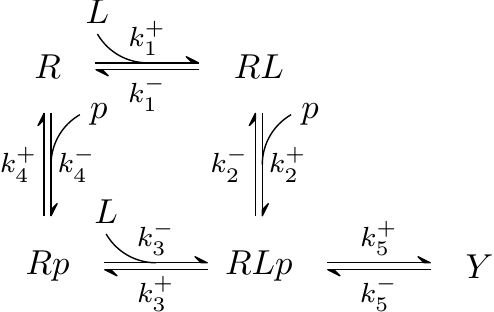} 
\end{center}
\caption{ Plot of $u_L(t)$ and $u_{RLp}(t)$ obtained from solving $\dot{u} = \hat{A} u$ for $u(0) =e_2$. Rate constants are $k^+=(10^{-3},10^4,10^{-3} ,10^{-3}, 1)$ and $k^-=(17.7, 10^4, 10^{-5}, 10^{5},  3\cdot10^{-5})$ and the steady state $\overline{c} = (0.03, 3481.5,  0.042,   3481.5,   6.85,   0.3, 10^4)^T$.}
\label{fig:ExampleRLp}
\end{figure}

\end{example}

\section{Summary and Discussion}
\label{sec:discussion}

\subsection{Summary}
The analysis of this paper is sparked by the question we posed in the Introduction, \emph{To which extent can a {non-dissipative} system perform  adaptation?} 
We investigated this question by first defining `non-dissipative' as `detailed-balance, mass-action' and `performing adaptation' as `having high Sensitivity and Precision', and then deriving a number of rigorous results about such systems.

Concretely, we prove that 
\begin{enumerate}
\item \emph{Unimolecular} reactions can have high Precision (Theorem~\ref{th:precision-homogeneous-systems}) but their Sensitivity is bounded by one (Theorem~\ref{thm:SensitivityUpBoundHomogenous}), even if the detailed-balance restriction is relaxed (Theorem~\ref{th:upper-bound-sensitivity-dissipative});
\item The {maximal} inverse Precision of a given system can be characterized in various combinatorial ways (Theorem~\ref{th:UpperBoundsInvP});
\item The maximal Sensitivity of a given system is at least as large as the maximal inverse Precision over all subsystems (Theorem~\ref{th:PSsubsystems}) and can sometimes be larger (Example~\ref{ex:sensitivity-larger-maxInvP}).
\item By modifying not only the coefficients but also the stoichiometry, Precision and Sensitivity can be made arbitrarily large (Examples~\ref{ex:B} and~\ref{ex:Daisy}).
\end{enumerate} 

In this way we show that \emph{non-dissipative systems can be arbitrarily adaptive.} 
This does require `extreme' systems however, in the sense of having large stoichiometry, large concentration ratios, and/or large time scale ratios. Theorem~\ref{th:upper-bound-sensitivity} shows that large concentration ratios are necessary for large Sensitivity. For the other two we have no rigorous characterization, but the examples suggest that at least large stoichiometry (Example~\ref{ex:B}) or large time scale ratios (Example~\ref{ex:Daisy}) are necessary for good performance. 

\subsection{Discussion}
We now comment on a number of aspects of this work. 

\medskip

\emph{Definition of `non-dissipative' systems.} Detailed-balance, mass-action systems are a natural choice for `non-dissipative' systems. They can be considered thermodynamically closed, and admit a free-energy functional $F$ that drives the evolution in a gradient-flow structure~\cite{GlitzkyMielke13,MaasMielkeInPrep}. In this context, one can identify `dissipation' with the instantaneous decrease of $F$, and the system is therefore non-dissipative in the sense that at all stationary points $F$ is constant. 

Despite these nice properties, this family does contain some weird specimens, such as
\[
X_1 \leftrightharpoons 2X_2, \qquad X_2 \leftrightharpoons 2X_1,
\]
for which the stoichiometric subspace is the \emph{whole} space of positive concentrations, and which clearly can not be mass-conservative in the traditional sense. In our examples we avoided such exotic species, and concentrated on systems that can be realized with actual chemical systems.

\medskip
\emph{Relation between `adaptation' and Precision and Sensitivity.}
In this paper we focus on Precision and Sensitivity as proxies for a more elaborate concept of adaptation. A better concept of adaptation might include (a) the persistence of `good' behaviour across a range of input concentrations, allowing for continuous tracking in the direction of increasing concentration, and (b) a measure of `temporary response' that measures not the instantaneous maximum of a concentration (like our Sensitivity) but some quality of a downstream machinery that acts on this concentration.

\medskip
\emph{Role of matroid theory.} 
The proof of the characterization of maximal  inverse Precision, Theorem~\ref{th:UpperBoundsInvP}, is formulated in linear-algebra terminology, but in fact the ideas are inspired by matroid theory, as explained in Section~\ref{sec:matroid}. Matroid theory arises in this context through the maximization over positive coefficients $k$ and $\overline c$, by which equality constraints become replaced by sign constraints (Lemma~\ref{lem:o_u} is a good illustration of this). The framework of matroid theory provides a natural structure in which to connect different characterizations of the same object, as illustrated by Theorem~\ref{th:UpperBoundsInvP}, and for this reason has been used in other works on chemical reaction networks~\cite{BeardBabsonCurtisQian04,MullerRegensburgerSteuer14,Reimers14TH}.

\medskip

\emph{Role of definitions.}
The conclusion of this paper, that non-dissipative systems can perform arbitrarily effective adaptation, serves as an illustration that the relationship between dissipation and functionality that is often broadly claimed in the literature requires very careful consideration; precise definitions are necessary, and at this stage it is not quite clear how to best choose these definitions, in order to obtain the clearest statements and most useful insight. 

\subsection{Comparison with~\cite{LanSartoriNeumannSourjikTu12,LanTu13}}
The results of this paper appear to be in contradiction with  remarks by Lan, Tu, and co-authors~\cite{LanSartoriNeumannSourjikTu12,LanTu13} that e.g.\ `adaptation is necessarily a non-equilibrium process and it always costs (dissipates) energy'~\cite{LanSartoriNeumannSourjikTu12} or `the I1-FFL (Incoherent type-1 feed-forward loop) network always operates out of equilibrium'~\cite{LanTu13}.

The discrepancy stems from a difference in definitions: both papers assume  a type of feedback that only exists in non-equilibrium systems.
Consider, as an example, the simple reaction $A\leftrightharpoons B$. From one point of view, this reaction encodes only \emph{positive} influence of $A$ on $B$ and vice versa, since starting from equilibrium, increasing $A$ leads to increase in $B$. From this point of view, a negative feedback mechanism can not be built using equilibrium building blocks, since negative feedback would require a negative influence. The systems of the present paper therefore fall outside of the scope of~\cite{LanSartoriNeumannSourjikTu12,LanTu13}.

However, in this simple reaction one can also observe negative influence, through the mechanism of \emph{redistribution}. Consider for instance  the following quantitative version:
\[
\dot A = B-10A  , \qquad \dot B = 10A - B.
\]
(This corresponds to $\overline c_A = 0.1$, $\overline c_B = 1$, and $k=1$ in the setup of this paper). If, starting from equilibrium $A=0.1$, $B=1$, we increase both $A$ and $B$ by the same amount, then the reaction will redistribute the total additional amount in the ratio $10:1$, as illustrated in Figure~\ref{fig:LanTu}. This has the same qualitative effect as negative feedback of $B$ on $A$ would have, as illustrated by the figure.
\begin{figure}[H]
\labellist
\small
\pinlabel{time} [t] at 245 2
\pinlabel{\rotatebox{90}{concentration}} [r] at -7 90
\pinlabel{\rotatebox{90}{concentration}} [r] at -7 300
\endlabellist
\includegraphics[height=6cm]{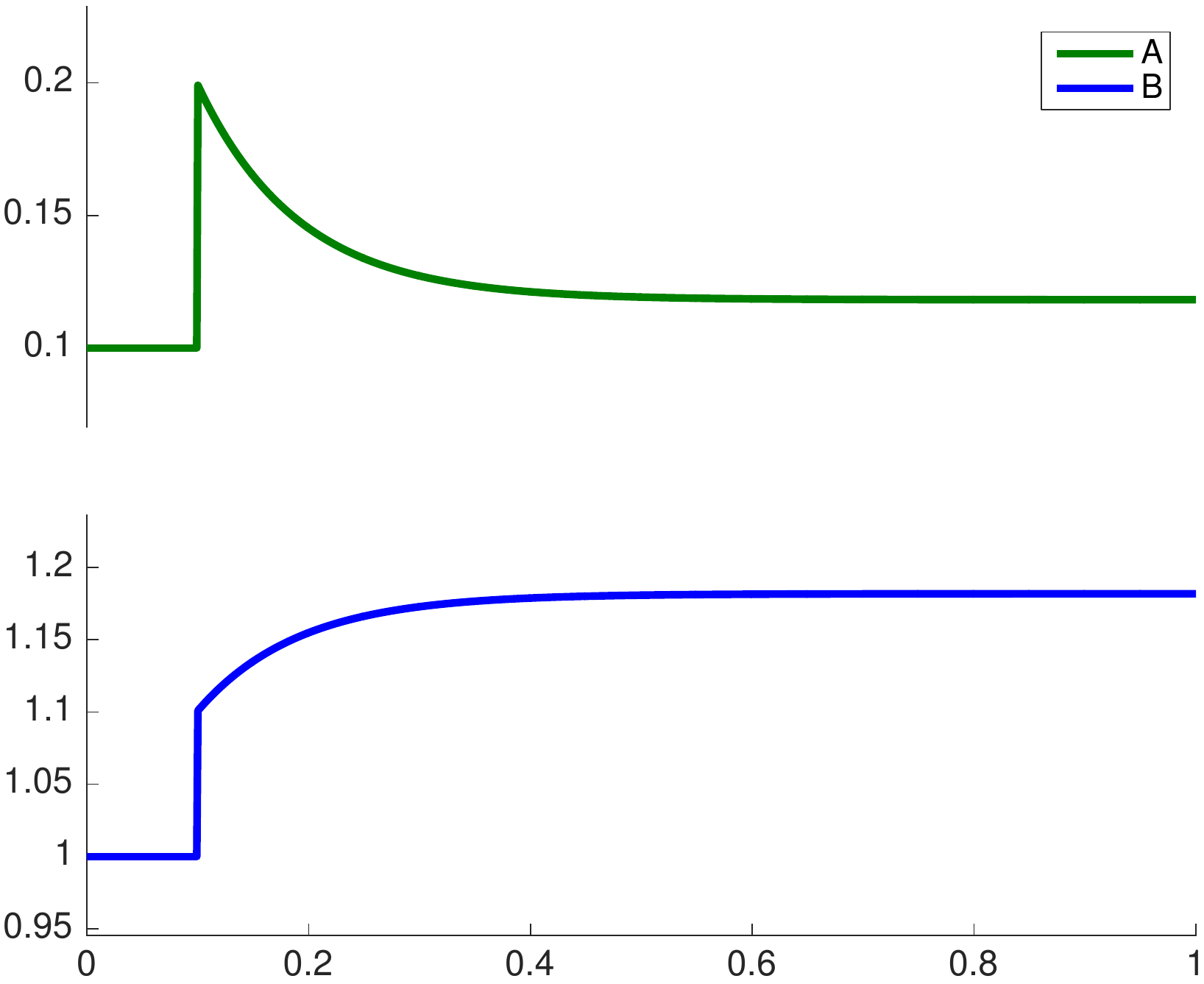}
\caption{Effective negative feedback in the reaction $A\leftrightharpoons B$.}
\label{fig:LanTu}
\end{figure}

This simple example allows us to explain how the non-dissipative systems of this paper have an effect very similar to the incoherent type-1 feed-forward loop (I1-FFL) studied in~\cite{LanTu13}. Consider the following two systems:

\begin{figure}[H]
%\schemestart
%   $X_1$\arrow(x1--x2){->}[-90] $X_2$ \arrow(--x3){-|}[-90] $X_3$  
%   \arrow(@x1--@x3){s>[+(225:2) and +(135:2)]}
%\schemestop
\subfigure[]
{\label{subfig:LT}
\schemestart
$X_1$ \arrow(x1--x3){->}[-90,2] $X_3$
\arrow(@x1--x2){->}[-50,1.2] $X_2$ \arrow(--x3){-|}[-130,1.2] 
\schemestop}
\qquad\qquad\qquad
\subfigure[]{\label{subfig:ExA}
\schemestart
$2X_1$ \arrow(x1--x3){s>[+(-70:1.3)]}[-90,2] $X_3$
\arrow(@x1--x2){s<>[+(-70:1.3)]}[-50,1.2] $X_2$ \arrow(--x3){<->[][$*$]}[-130,1.2] 
 \schemestop
}
\caption{\subref{subfig:LT} The I1-FFL from~\cite{LanTu13}; \subref{subfig:ExA} Example A, reformatted.}
\end{figure}
\medskip

The system on the left is such an incoherent feed-forward loop, depicted using the traditional biochemical notation for positive and negative influence, while the system on the right is that of Example A of this paper, reformatted to resemble the system on the left.
The basis for the adaptive effect of the I1-FFL is the difference in time scale between the fast activation $X_1\to X_3$, which first leads to increase of $X_3$, and the slow inhibition $X_2\dashv X_3$, which reduces $X_3$ again on a longer time scale.

We can recognize the same working principle in the system of Example A on the right. An increase in input $X_1$ leads to an increase in both $X_2$ and `output' $X_3$; on the slower time scale of reaction $*$, the redistribution effect just described then reduces the value of $X_3$.

To conclude, the apparent discrepancy between the results of Lan, Tu, and co-authors on one hand and those of this paper can be traced back to a focus on different systems; the systems of this paper lie outside of the scope of~\cite{LanSartoriNeumannSourjikTu12,LanTu13}. If the systems of this paper are taken into account, then it is clear that `good adaptive performance', in the sense of high Precision and Sensitivity, can be achieved perfectly well in non-dissipative systems. 

%{Actually, something stronger is even true. In Lan et al the system and parameters are chosen such that in the no-dissipation case the stationary state is actually unstable. The same parameter that tunes dissipation also tunes the stability; if the parameter is moved far enough in the direction of more dissipation, the stationary state becomes stable. To me it is not clear how one can then disentangle the change in dissipation rate from the other influence that single parameter has on the system---and therefore I don't see how one can claim that the change in system behaviour is caused solely by the change in dissipation. But I suggest not to write this.}

%
%\bigskip
%
%
%(Discuss somewhere the option of measuring Sensitivity through the free energy of the state; note that it is not clear what the natural non-eq generalization of this would be)
%
%(Maybe make a connection with the PEN toolbox?)
%
\bibliography{ref}

\newcommand{\etalchar}[1]{$^{#1}$}
\begin{thebibliography}{MTES{\etalchar{+}}09}

\bibitem[BBCQ04]{BeardBabsonCurtisQian04}
D.~A. Beard, E.~Babson, E.~Curtis, and H.~Qian.
\newblock Thermodynamic constraints for biochemical networks.
\newblock {\em Journal of theoretical biology}, 228(3):327--333, 2004.

\bibitem[BDGC15]{BoDel-GiudiceCelani15}
S.~Bo, M.~Del~Giudice, and A.~Celani.
\newblock Thermodynamic limits to information harvesting by sensory systems.
\newblock {\em Journal of Statistical Mechanics: Theory and Experiment},
  2015(1):P01014, 2015.

\bibitem[BEGK03]{Boros2003}
E.~Boros, K.~Elbassioni, V.~Gurvich, and L.~Khachiyan.
\newblock Algorithms for enumerating circuits in matroids.
\newblock In {\em Algorithms and computation}, volume 2906 of {\em Lecture
  Notes in Comput. Sci.}, pages 485--494. Springer, Berlin, 2003.

\bibitem[BS13]{BartonSontag13}
J.~P. Barton and E.~D. Sontag.
\newblock The energy costs of insulators in biochemical networks.
\newblock {\em Biophysical journal}, 104(6):1380--1390, 2013.

\bibitem[CWOT15]{CaoWangOuyangTu15}
Y.~Cao, H.~Wang, Q.~Ouyang, and Y.~Tu.
\newblock The free-energy cost of accurate biochemical oscillations.
\newblock {\em Nature Physics}, 2015.

\bibitem[{\'E}T89]{ErdiToth89}
P.~{\'E}rdi and J.~T{\'o}th.
\newblock {\em {Mathematical Models of Chemical Reactions: Theory and
  Applications of Deterministic and Stochastic Models}}.
\newblock Manchester University Press, 1989.

\bibitem[Fel97]{Fell97}
D.~A. Fell.
\newblock {\em Understanding the Control of Metabolism}.
\newblock Portland Press, London, 1997.

\bibitem[GM13]{GlitzkyMielke13}
A.~Glitzky and A.~Mielke.
\newblock A gradient structure for systems coupling reaction--diffusion effects
  in bulk and interfaces.
\newblock {\em Zeitschrift f{\"u}r angewandte Mathematik und Physik},
  64(1):29--52, 2013.

\bibitem[GSKA09]{GoentoroShovalKirschnerAlon09}
L.~Goentoro, O.~Shoval, M.~W. Kirschner, and U.~Alon.
\newblock The incoherent feedforward loop can provide fold-change detection in
  gene regulation.
\newblock {\em Molecular cell}, 36(5):894--899, 2009.

\bibitem[GtW13]{GovernWolde13TR}
C.~C. Govern and P.~R. ten Wolde.
\newblock How biochemical resources determine fundamental limits in cellular
  sensing.
\newblock {\em arXiv preprint arXiv:1308.1449}, 2013.

\bibitem[HJ72]{HornJackson72}
F.~Horn and R.~Jackson.
\newblock General mass action kinetics.
\newblock {\em Archive for rational mechanics and analysis}, 47(2):81--116,
  1972.

\bibitem[HS96]{HeinrichSchuster96}
R.~Heinrich and S.~Schuster.
\newblock {\em The Regulation of Cellular Systems}.
\newblock Chapman \& Hall, New York, 1996.

\bibitem[KBE{\etalchar{+}}05]{Khachian2005}
L.~Khachiyan, E.~Boros, K.~Elbassioni, V.~Gurvich, and K.~Makino.
\newblock On the complexity of some enumeration problems for matroids.
\newblock {\em SIAM J. Discrete Math.}, 19(4):966--984, 2005.

\bibitem[Lan61]{Landauer61}
R.~Landauer.
\newblock Irreversibility and heat generation in the computing process.
\newblock {\em IBM journal of research and development}, 5(3):183--191, 1961.

\bibitem[Lan15]{Lan15}
G.~Lan.
\newblock Energy dissipation drives the gradient signal amplification through
  an incoherent type-1 feed-forward loop.
\newblock {\em Physical Review E}, 92(3):032702, 2015.

\bibitem[LHW08]{LapidusHanWang08}
S.~Lapidus, B.~Han, and J.~Wang.
\newblock Intrinsic noise, dissipation cost, and robustness of cellular
  networks: The underlying energy landscape of {MAPK} signal transduction.
\newblock {\em Proceedings of the National Academy of Sciences},
  105(16):6039--6044, 2008.

\bibitem[LSN{\etalchar{+}}12]{LanSartoriNeumannSourjikTu12}
G.~Lan, P.~Sartori, S.~Neumann, V.~Sourjik, and Y.~Tu.
\newblock The energy-speed-accuracy trade-off in sensory adaptation.
\newblock {\em Nature physics}, 8(5):422--428, 2012.

\bibitem[LT13]{LanTu13}
G.~Lan and Y.~Tu.
\newblock The cost of sensitive response and accurate adaptation in networks
  with an incoherent type-1 feed-forward loop.
\newblock {\em Journal of The Royal Society Interface}, 10(87):20130489, 2013.

\bibitem[MHL12]{MuruganHuseLeibler12}
A.~Murugan, D.~A. Huse, and S.~Leibler.
\newblock Speed, dissipation, and error in kinetic proofreading.
\newblock {\em Proceedings of the National Academy of Sciences},
  109(30):12034--12039, 2012.

\bibitem[MM]{MaasMielkeInPrep}
J.~Maas and A.~Mielke.
\newblock On gradient structures for chemical reactions with detailed balance:
  I. {M}odeling and large-volume limit.

\bibitem[MRS14]{MullerRegensburgerSteuer14}
S.~M{\"u}ller, G.~Regensburger, and R.~Steuer.
\newblock Enzyme allocation problems in kinetic metabolic networks: Optimal
  solutions are elementary flux modes.
\newblock {\em Journal of theoretical biology}, 347:182--190, 2014.

\bibitem[MS12]{MehtaSchwab12}
P.~Mehta and D.~J. Schwab.
\newblock Energetic costs of cellular computation.
\newblock {\em Proceedings of the National Academy of Sciences},
  109(44):17978--17982, 2012.

\bibitem[MTES{\etalchar{+}}09]{MaTrusinaEl-SamadLimTang09}
W.~Ma, A.~Trusina, H.~El-Samad, W.~A. Lim, and C.~Tang.
\newblock Defining network topologies that can achieve biochemical adaptation.
\newblock {\em Cell}, 138(4):760--773, 2009.

\bibitem[Oxl14]{Oxley14}
J.~Oxley.
\newblock What is a matroid?
\newblock \url{http://www.math.lsu.edu/~oxley/survey4.pdf}, 2014.

\bibitem[QR05]{QianReluga05}
H.~Qian and T.~C. Reluga.
\newblock Nonequilibrium thermodynamics and nonlinear kinetics in a cellular
  signaling switch.
\newblock {\em Physical review letters}, 94(2):028101, 2005.

\bibitem[Rei14]{Reimers14TH}
A.~C. Reimers.
\newblock {\em Metabolic Networks, Thermodynamic Constraints, and Matroid
  Theory}.
\newblock PhD thesis, Freie Universit{\"a}t Berlin, 2014.

\bibitem[SGLH14]{SartoriGrangerLeeHorowitz14}
P.~Sartori, L.~Granger, C.~F. Lee, and J.~M. Horowitz.
\newblock Thermodynamic costs of information processing in sensory adaptation.
\newblock {\em PLoS computational biology}, 10(12):e1003974, 2014.

\bibitem[SNMW13]{SkogeNaqviMeirWingreen13}
M.~Skoge, S.~Naqvi, Y.~Meir, and N.~S. Wingreen.
\newblock Chemical sensing by nonequilibrium cooperative receptors.
\newblock {\em Physical review letters}, 110(24):248102, 2013.

\bibitem[SRA{\etalchar{+}}08]{SaltelliRattoAndresCampolongoCariboniGatelliSaisanaTarantola08}
A.~Saltelli, M.~Ratto, T.~Andres, F.~Campolongo, J.~Cariboni, D.~Gatelli,
  M.~Saisana, and S.~Tarantola.
\newblock {\em Global Sensitivity Analysis: The Primer}.
\newblock John Wiley \& Sons, 2008.

\bibitem[Szi29]{Szilard29}
L.~Szilard.
\newblock {\"U}ber die {E}ntropieverminderung in einem thermodynamischen
  {S}ystem bei {E}ingriffen intelligenter {W}esen.
\newblock {\em Zeitschrift f{\"u}r Physik}, 53(11-12):840--856, 1929.

\bibitem[Tu08]{Tu08}
Y.~Tu.
\newblock The nonequilibrium mechanism for ultrasensitivity in a biological
  switch: Sensing by {M}axwell's demons.
\newblock {\em Proceedings of the National Academy of Sciences},
  105(33):11737--11741, 2008.

\bibitem[WXW08]{WangXuWang08}
J.~Wang, L.~Xu, and E.~Wang.
\newblock Potential landscape and flux framework of nonequilibrium networks:
  Robustness, dissipation, and coherence of biochemical oscillations.
\newblock {\em Proceedings of the National Academy of Sciences},
  105(34):12271--12276, 2008.

\bibitem[XZWW13]{XuZhangWangWang13}
L.~Xu, F.~Zhang, E.~Wang, and J.~Wang.
\newblock The potential and flux landscape, {L}yapunov function and
  non-equilibrium thermodynamics for dynamic systems and networks with an
  application to signal-induced {Ca2+} oscillation.
\newblock {\em Nonlinearity}, 26(2):R69, 2013.

\end{thebibliography}
\bibliographystyle{alphainitials}

\end{document}